\newtheorem{remark}[theorem]{Remark}
\title{Limiting absorption principle and perfectly matched layer method for  Dirichlet Laplacians in quasi-cylindrical domains.  \thanks{This
        work was supported by grant number 108898 awarded by the Academy
of Finland.}}
\author{Victor Kalvin \thanks{Department of Mathematics and Statistics, Concordia University, 1455 de
Maisonneuve West, Montreal, H3G 1M8 Quebec, Canada
 (\tt vkalvin@gmail.com)}}
\begin{document}
\maketitle
\begin{abstract}
We establish a limiting absorption principle for Dirichlet
Laplacians in quasi-cylindrical domains. Outside a bounded set these
domains can be transformed onto a semi-cylinder by suitable
diffeomorphisms. Dirichlet Laplacians model quantum or
acoustically-soft waveguides associated with quasi-cylindrical
domains.  We construct a uniquely solvable problem with perfectly
matched layers of finite length. We prove that solutions of the
latter problem approximate outgoing or incoming solutions with an
error that exponentially tends to zero as the length of layers tends
to infinity. Outgoing and incoming solutions are characterized by
means of the limiting absorption principle.
\end{abstract}

\begin{keywords} Perfectly Matched Layers,
PML, quasi-cylindrical domains, Dirichlet Laplacian, limiting
absorption principle, resonances, compound expansions
\end{keywords}

\begin{AMS} 35J25, 65N12, 35Q40, 35P25
\end{AMS}

\pagestyle{myheadings} \thispagestyle{plain} \markboth{VICTOR
KALVIN}{DIRICHLET LAPLACIANS IN QUASI-CYLINDRICAL DOMAINS}


\section{Introduction}
 The perfectly matched layer (PML) method,
originally introduced in~\cite{ref1}, is in common use for the
numerical analysis of a wide class of problems. For some of them
 stability and convergence of the method have been proved
mathematically; see,
e.g.,~\cite{ref2,ref3,ref4,ref4+,ref5,ref5+,KalvinSiNum}. In the
present paper we develop the PML method for  Dirichlet Laplacians in
 quasi-cylindrical domains $\mathcal G\subset\mathbb R^{n+1}$, see
Fig.~\ref{fig1}. These are unbounded domains that outside a bounded
set can be transformed onto a semi-cylinder by suitable
diffeomorphisms.
 \begin{figure}[h]
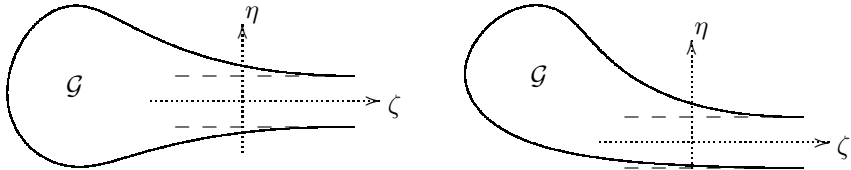

\[\xy (0,0)*{\xy0;/r.16pc/:
  (35,5); (35,-5) **\crv{(0,5)&(-20,30)&(-40,0)&(-20,-20)&(0,-5)};
 {\ar@{.>} (-5,0)*{}; (40,0)*{}}; (43,-1)*{\zeta}; {\ar@{.>}
 (13,-10)*{}; (13,15)*{}};(15,17)*{\eta}; (-20,3)*{{\mathcal G}};
 (0,5)*{};(35,5)*{}**\dir{--};(0,-5)*{};(35,-5)*{}**\dir{--};\endxy};
(60,0)*{\xy0;/r.16pc/: (35,5); (35,-5)
**\crv{(0,5)&(-10,35)&(-35,20)&(-30,0)&(0,-5)}; {\ar@{.>}
(-5,0)*{}; (40,0)*{}}; (43,-1)*{\zeta}; {\ar@{.>} (13,-5)*{};
(13,20)*{}}; (15,22)*{\eta};(-17,13)*{{\mathcal G}};
(0,5)*{};(35,5)*{}**\dir{--};(0,-5)*{};(35,-5)*{}**\dir{--};\endxy};
\endxy
\]
\caption{Examples of quasi-cylindrical domains in $\Bbb
R^2$.}\label{fig1}
\end{figure}
Intuitively, one can understand $\mathcal G$ as a domain whose
 boundary asymptotically approaches at infinity the boundary
of a semi-cylinder $(0,\infty)\times\Omega$, where the cross-section
$\Omega$ of $\mathcal G$ at infinity is a bounded domain in $\Bbb
R^n$. Dirichlet Laplacians $\Delta$ model quantum or
acoustically-soft waveguides associated with quasi-cylindrical
domains. In order to characterize outgoing and incoming solutions of
the Helmholtz equation $(\Delta-\mu)u=f$ we establish a limiting
absorption principle. Then we construct a uniquely solvable problem
with PMLs of finite length. This is a Dirichlet problem  in the
domain $\mathcal G$ truncated at a finite distance. We prove that
solutions of the latter problem locally approximate outgoing or
incoming solutions of the Helmholtz equation with an error that
exponentially tends to zero as the length of PMLs tends to infinity.
In other words, we prove stability and exponential convergence of
the PML method. We find that the rate of exponential convergence
depends only on the spectral parameter $\mu$ and on the infinitely
distant cross-section $\Omega$. Thus the rate is the same as in the
particular case of a domain $\mathcal G$ that coincides with the
semi-cylinder $(0,\infty)\times\Omega$ outside a bounded set.

As is known,  construction of PMLs is closely related to the complex
scaling. The complex scaling involves complex dilation of variables
and has a long tradition in mathematical physics and numerical
analysis; for a historical account see e.g.~\cite{Hislop
Sigal,Cycon,Simon Reed iv}.  Although there are several papers
utilizing different approaches to the complex scaling in
waveguide-type of geometry, e.g.~\cite{arsenev,DES,DEM,KalvinSiNum},
the complex scaling has not been used in quasi-cylindrical domains
before. Our approach  to the complex scaling originates from the one
developed in~\cite{Hunziker} for a Schr\"{o}dinger operator in $\Bbb
R^3$ (see also~\cite{Hislop Sigal}). Deformations of the Dirichlet
Laplacian by means of the complex scaling give rise to an analytic
family of non-selfadjoint operators in $\mathcal G$.   These
operators correspond to a Dirichlet problem with infinite PMLs.
Localization of the essential spectra of these operators together
with  certain relations between their resolvents  justifies a
limiting absorption principle. For locating the essential spectra we
employ methods of the theory of elliptic boundary value
problems~\cite{KozlovMaz`ya,KozlovMazyaRossmann,MP2}. Relations
between the resolvents are obtained with the help of Hardy spaces of
analytic functions. Note that Hardy spaces in context of the complex
scaling were originally used in~\cite{Van Winter I,Van Winter}. As
we mention in Remark~\ref{ABCS}, our methods also make it possible
to develop an analog of the celebrated Aguilar-Balslev-Combes-Simon
theory of resonances.

As is typically the case, solutions to the Helmholtz equation
satisfying the limiting absorption principle locally coincide with
solutions to the problem with infinite PMLs. Moreover, under certain
assumptions on the right hand side solutions to the latter problem
are of some exponential decay at infinity. This allows us to prove
unique solvability of the problem with finite PMLs and establish
exponential convergence of the PML method by using the compound
expansion technique~\cite{ref6,KozlovMazyaRossmann}.
In~\cite{KalvinSiNum} we used a similar approach to study the PML
method for inhomogeneous media. Here we study the PML method for a
wide class of quasi-cylindrical domains.


This paper is organized as follows. Section~\ref{s1} consists of
preliminaries, where we introduce notations, formulate our
assumptions on the quasi-cylindrical domains, and give a formal
definition of operators corresponding to the problem with infinite
PMLs. 
In Section~\ref{sec3} we demonstrate that the operators are
well-defined and derive some estimates on their coefficients.
As shown in Section~\ref{secAF}, these operators give rise to an analytic family of m-sectorial operators. 
In Section~\ref{s3} we introduce and study Hardy spaces of analytic
functions. In Section~\ref{s6} we formulate and prove a limiting
absorption principle. In Section~\ref{s5} we show that under certain
assumptions on the right hand side solutions to the problem with
infinite PMLs are of some exponential decay at infinity. Finally, in
Section~\ref{secPML} we formulate and study the problem with finite
PMLs and prove exponential convergence of the PML method.

\section{Preliminaries} \label{s1} In this section we introduce basic notations that are in use throughout the paper. We formulate
our assumptions on the quasi-cylindrical domains and introduce
differential operators corresponding to the problem with infinite
PMLs. Recall that PMLs are artificial strongly absorbing layers
designed so that waves coming from a non-PML medium to PMLs do not
reflect at the interface.

Let $(x,y)$ and $(\zeta,\eta)$ be two systems of the Cartesian
coordinates in $\mathbb R^{n+1}$, $n\geq 1$, such that
$x,\zeta\in\mathbb R$, while $y=(y_1,\dots,y_n)$ and
$\eta=(\eta_1,\dots,\eta_n)$ are in $\mathbb R^n$. Let
$\partial_x=\frac d {d x}$, $\partial_{y_m}=\frac d {d y_{m}}$,
  and $\partial_\zeta=\frac d {d \zeta}$, $\partial_{\eta_m}=\frac d {d \eta_{m}}$.

Consider a  bounded  domain $\Omega\subset\mathbb R^n$, and the
semi-cylinder $\mathbb R_+\times\overline{\Omega}$, where $\mathbb
R_+=\{x\in\mathbb R: x>0\}$. We say that $\mathcal C\subset\mathbb
R^{n+1}$ is a quasi-cylinder, if there exists a diffeomorphism
\begin{equation}\label{diff}
\mathbb R_+\times\overline{\Omega}\ni(x,y)\mapsto
\varkappa(x,y)=(\zeta,\eta)\in \overline{\mathcal C},
\end{equation}
such that the elements $\varkappa'_{\ell m}(x,\cdot)$ of its
Jacobian matrix $\varkappa'$ tend to the Kronecker delta
$\delta_{\ell m}$ in the space $C^\infty(\overline{\Omega})$  as
$x\to+\infty$.

 Let
${\mathcal G}$ be a  domain in $\mathbb R^{n+1}$ with smooth
boundary $\partial {\mathcal G}$. We suppose that the set
$\{(\zeta,\eta)\in {\mathcal G}: \zeta\leq 0\}$ is bounded, and the
set $\{(\zeta,\eta)\in{\mathcal G}:\zeta> 0\}$ coincides with a
quasi-cylinder $\mathcal C$. (Extension of our results to the case
of a domain $\mathcal G$ that coincides outside a bounded set with
several quasi-cylinders is straightforward.)
Following~\cite{KozlovMazyaRossmann}, we say that $\mathcal G$ is a
quasi-cylindrical domain.

 Introduce the notation
$\nabla_{\zeta\eta}=(\partial_\zeta,\partial_{\eta_1},\dots,\partial_{\eta_n})^\top$.
In the domain $\mathcal G$ we consider the Dirichlet Laplacian
$\Delta=-\nabla_{\zeta\eta}\cdot\nabla_{\zeta\eta}$, which  is
initially defined on the set $C_0^\infty(\overline{\mathcal G})$ of
all smooth compactly supported
 functions $u$ in $\overline{\mathcal G}$ satisfying the
Dirichlet boundary condition $u\upharpoonright_{\partial\mathcal
G}=0$.

Consider the complex scaling $x\mapsto x+\lambda s(x-r)$ with
parameters $r>0$ and $\lambda\in\Bbb C$. Here $s(x)$ is a smooth
scaling function possessing the properties:
\begin{eqnarray}
& s(x)=0 \text{ for } x\leq 0,\label{ab1}\\
& 0\leq s'(x)\leq 1 \text{ for all } x\in\mathbb R,\label{ab2}\\
& s'(x)=1 \text{ for } x\geq C>0\label{ab3},
\end{eqnarray}
where $s'(x)=\partial_x s(x)$, and $C$ is arbitrary. For all real
$\lambda\in(-1,1)$ the function $\mathbb R_+\ni x\mapsto x+\lambda
s(x-r)$ is invertible, and
$$
\Bbb R_+\times\overline{\Omega}\ni(x,y)\mapsto\kappa_{\lambda,r}
(x,y)=(x+\lambda s(x-r),y)\in\Bbb R_+\times\overline{\Omega}
$$
is a selfdiffeomorphism of the semi-cylinder. Therefore
\begin{equation}\label{sc}
\vartheta_{\lambda,r}(\zeta,\eta)=\left\{
                      \begin{array}{lll}
                        \varkappa\circ\kappa_{\lambda,r}\circ\varkappa^{-1}(\zeta,\eta)&\text{ for }  & (\zeta,\eta)\in\overline{\mathcal C}, \\
                       (\zeta,\eta) &\text{ for }& (\zeta,\eta)\in\overline{\mathcal G}\setminus\overline{\mathcal C},
                      \end{array}
                    \right.
\end{equation}
is a selfdiffeomorphism of   $\overline{\mathcal G}$. In other
words, $\vartheta_{\lambda,r}$ with $\lambda\in(-1,1)$ and $r>0$ is
a scaling of the quasi-cylinder $\mathcal C$ along the curvilinear
coordinate $x$.
 Let $(\vartheta_{\lambda,r}')^\top$ be the transpose of the Jacobian matrix $\vartheta_{\lambda,r}'$. Then
$\mathsf e_{\lambda,r}=(\vartheta_{\lambda,r}')^\top
\vartheta_{\lambda,r}'$ is the matrix coordinate representation of a
 metric $\mathsf e_{\lambda,r}$ on $\overline{\mathcal
G}$, and
\begin{equation}\label{v2}
{\Delta_{\lambda,r}}=-\bigl({\det \mathsf e_{\lambda,r}
}\bigr)^{-1/2}\nabla_{\zeta\eta}\cdot\bigl({\det \mathsf
e_{\lambda,r} }\bigr)^{1/2}\mathsf
e^{-1}_{\lambda,r}\nabla_{\zeta\eta}
\end{equation}
is the Laplace-Beltrami operator on the Riemannian manifold
$(\overline{\mathcal G},\mathsf e_{\lambda,r})$.
  As the parameter $r$  increases,  the equalities $\vartheta_{\lambda,r}(\zeta,\eta)=(\zeta,\eta)$,  $\mathsf
e_{\lambda,r}(\zeta,\eta)=\operatorname{Id}$, where
$\operatorname{Id}$ is the $(n+1)\times(n+1)$ identity matrix, and
the equality $\Delta_{\lambda,r}=\Delta$  become valid on a larger
and larger subset of $\mathcal G$.  In the case $\lambda=0$ the
scaling is not applied.  Therefore $\mathsf e_{0, r}\equiv\mathsf e$
is the Euclidean metric  and $\Delta_{0,r}\equiv\Delta$.

 In order to  consider  complex values of the scaling parameter $\lambda$, we impose  additional assumptions on the diffeomorphism~\eqref{diff}:
\begin{enumerate}
\item[\it i.] the
function $\mathbb R_+\ni x\mapsto \varkappa(x,\cdot)\in
C^\infty(\overline{\Omega})$ has an analytic continuation from
$\mathbb R_+$ to some sector
    \begin{equation}\label{S}
    \mathbb S_\alpha=\{z\in\mathbb C: |\arg
    z|<\alpha<\pi/4\};
    \end{equation}
\item[\it ii.] the elements $\varkappa'_{\ell m}(z,\cdot)$ of the Jacobian matrix $\varkappa'$ tend to the
Kronecker delta $\delta_{\ell m}$ in the space
$C^\infty(\overline{\Omega})$ uniformly in $z\in\mathbb S_\alpha$
as $z\to \infty$.
\end{enumerate}

For instance, the assumptions~{\it i,ii}  are satisfied for the
following quasi-cylinders:
$$
 \mathcal C=\{(\zeta,\eta)\in\mathbb R^2:(\zeta,\eta)=(x,y+\log(x+2)),\ x\in \mathbb R_+,\ y\in[0,1]\},
$$
$$
 \mathcal C=\Bigl\{(\zeta,\eta)\in \mathbb R^{2}: \zeta=\int_0^x\varphi(t)dt,\
 \eta=y\psi(x),\ x\in \mathbb R_+,\ y\in[0,1]\Bigr\},
$$
where as $\varphi(x)$ and $\psi(x)$ we can  take the functions $1$,
$1+e^{-x}$, $1+(x+1)^{-s}$ with $s>0$,  $1+1/\log(x+2)$,
$1+1/\log(1+\log(x+2))$, and so on. These examples show that
quasi-cylinders can have very different shapes comparing with the
semi-cylinder.

In the next section we will show that for all sufficiently large
$r>0$ the assumptions {\it i, ii} on $\varkappa$ together
with~\eqref{ab1} and \eqref{ab2} lead to the analyticity of the
coefficients of the differential operator~\eqref{v2} with respect to
the scaling parameter $\lambda$ in the disk
\begin{equation}\label{disk }
\mathcal D_\alpha=\{\lambda\in\mathbb C:
|\lambda|<\sin\alpha<1/\sqrt{2}\}.
\end{equation}
Thus the equality~\eqref{v2} defines $\Delta_{\lambda,r}$ for all
$\lambda\in\mathcal D_\alpha$. Clearly, $\Delta_{\lambda,r}$
coincides with the Dirichlet Laplacian $\Delta$ on the set
\begin{equation}\label{truncated}
\mathcal G_r=(\mathcal G\setminus\mathcal C)\cup\{(\zeta,\eta)\in
\mathcal C: (\zeta,\eta)=\varkappa(x,y), x< r, y\in\Omega\}.
\end{equation}
We will show that on the set $\mathcal G\setminus\mathcal G_{r}$ the
operator $\Delta_{\lambda,r}$ with $\lambda\in\mathcal
D_\alpha\setminus\Bbb R$ describes an infinite PML  for $\Delta$. In
the case $\Im\lambda>0$ (resp. $\Im\lambda<0$) this PML is an
artificial nonreflective strongly absorbing layer for the outgoing
(resp. incoming) solutions.

\begin{remark} For simplicity we consider in this paper  only Dirichlet Laplacians.
However, similar methods can be used to develop and study PML method
for the  Schr\"{o}dinger operator  $\Delta+V$ in $\mathcal G$, where
$\Delta$ is the Dirichlet Laplacian and $V\in
C^\infty(\overline{\mathcal G})$ is a real-valued 
potential with the following properties: for some $r_0>0$
and $\alpha>0$ the function $x\mapsto V\circ\varkappa(x,\cdot)\in
L^2(\Omega)$ extends by analyticity to the sector $\{z\in \Bbb C:
|\arg(z-r_0)|<\alpha\}$, where for all $y\in\overline{\Omega}$ we
have $|V\circ\varkappa(z,y)|\leq C(|z|)\to 0$  as $z\to\infty$. One
can also include into consideration potentials with moderate local
singularities and relatively bounded operator-valued potentials.
\end{remark}

\section{Construction of infinite PMLs}\label{sec3}
In this section we show that for all sufficiently large $r>0$ the
differential operator~\eqref{v2} is well-defined for complex values
of the scaling parameter $\lambda$ in the disk~\eqref{disk }. We
also obtain some estimates on the matrix $\mathsf e_{\lambda, r}$.

Consider the quasi-cylinder $\overline{\mathcal C}$ as a manifold
endowed with the Euclidean metric $\mathsf e$. We will use the
coordinates $(\zeta,\eta)$ in $\overline{\mathcal G}$ and $(x,y)$ in
$\Bbb R_+\times\overline{\Omega}$, and identify the Riemannian
metrics on $\overline{\mathcal G}$ and $\Bbb
R_+\times\overline{\Omega}$ with their matrix coordinate
representations. Let $\mathsf g=\varkappa^* \mathsf e$ be the
pullback of the metric  $\mathsf e$ by the diffeomorphism
$\varkappa$ in~\eqref{diff}. Then the  matrix $\mathsf g=[\mathsf
g_{\ell m}]_{\ell ,m=1}^{n+1}$ is given by the equality $\mathsf
g=(\varkappa')^\top\varkappa'$, where $(\varkappa')^\top$ is the
transpose of the Jacobian $\varkappa'$. Since the diffeomorphism
$\varkappa$ satisfies the assumptions~{\it i,ii} of
Section~\ref{s1}, we conclude that  the metric matrix elements
\begin{equation}\label{partan}
\mathbb S_\alpha\ni z\mapsto  \mathsf g_{\ell m}(z,\cdot)\in
C^\infty(\overline{\Omega})
\end{equation}
are analytic functions. Moreover, $\mathsf g_{\ell m}(z,\cdot)$
tends to the Kronecker delta $\delta_{\ell m}$ in the space
$C^\infty(\overline{\Omega})$ uniformly in $z\in \mathbb S_\alpha$
as $z\to \infty$ or, equivalently, we have
 \begin{equation}\label{stab}
  \bigl|\partial_y^q(\mathsf g_{\ell m}(z,y)-
 \delta_{\ell m})\bigr |\leq C_q(|z|)\to 0  \text{ as } z\to \infty,\ z\in \mathbb S_\alpha,\ y\in\overline{\Omega},\ |q|\geq 0;
 \end{equation}
here
$\partial^q_{y}=\partial^{q_1}_{y_1}\partial^{q_2}_{y_2}\dots\partial^{q_n}_
{y_n}$ with a multiindex $q=(q_1,\dots,q_n)$, and  $|q|=\sum q_j$.

Consider the selfdiffeomorphism $\kappa_{\lambda,r}$,
$\lambda\in(-1,1)$, of the semi-cylinder $\Bbb
R_+\times\overline{\Omega}$.  We define the metric $\mathsf
g_{\lambda,r}={\kappa_{\lambda,r}}^*\mathsf g$ on $\Bbb
R_+\times\overline{\Omega}$ as the pullback of the metric~$\mathsf
g$ by $\kappa_{\lambda,r}$. As a result we get the manifold $(\Bbb
R_+\times\overline{\Omega}, \mathsf g_{\lambda,r})$ parameterized by
$\lambda\in(-1,1)$ and $r>0$. We deduce
 \begin{equation}\label{metric}
 \mathsf g_{\lambda,r}(x,y)=\diag\left\{1+\lambda
s'(x-r),\operatorname{Id} \right\}\mathsf g(x+\lambda
 s(x-r),y)\diag\left\{1+\lambda s'(x-r),\operatorname{Id} \right\},
 \end{equation}
 where $\operatorname{Id}$
 stands for the $n\times n$-identity matrix, and $\diag\left\{1+\lambda s'(x-r),\operatorname{Id} \right\}$ is
 the Jacobian of $\kappa_{\lambda,r}$.

Let us consider complex values of the scaling parameter $\lambda$.
We suppose that $\lambda$ is in the complex disk $\mathcal
D_\alpha$, where $\alpha$ is the same as in our assumptions on the
partial analytic regularity of the diffeomorphism $\varkappa$;
cf.~\eqref{S},~\eqref{disk }. The curve $\mathfrak
L_{\lambda,r}=\{z\in\mathbb C: z=x+\lambda s(x-r),x>0 \}$ lies in
the sector $\mathbb S_\alpha$, see Fig.~\ref{fig+}.
\begin{figure}
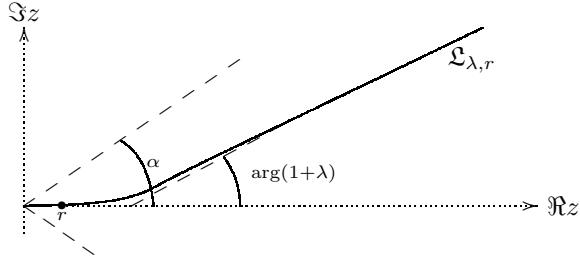
 \[ \xy0;/r.17pc/:
{\ar@{.>} (-12,0);(75,0)};(80,0)*{\Re z};
(-20,0);(20,27)**\dir{--};(-20,0);(-7,-9)**\dir{--};
(4,0);(-2,12)**\crv{(3,9)};(4,8)*{\scriptstyle\alpha};
{\ar@{.>} (-20,-5);(-20,33)}; (-20,36)*{ \Im z};
(-20,0);(65,33)**\crv{(-2,0)&(5,4)&(15,9)&(36,19)};
(0,0);(28,15)**\dir{--};(-13,0)*{\scriptstyle\bullet};(-13,-2)*{\scriptstyle
r};
(17,9);(20,0)**\crv{(20,5)};(30,6)*{\scriptstyle\arg(1+\lambda)};
(63,27)*{ \mathfrak L_{\lambda,r}};
\endxy\]
\caption{The curve $\mathfrak L_{\lambda,r}$  for complex values of
$\lambda$.}\label{fig+}
\end{figure}
We define the matrix $\mathsf g_{\lambda,r}$ for all non-real
$\lambda$ in the disk by the equality~\eqref{metric}, where $\mathsf
g(x+\lambda s(x-r),y)$ stands for the value of the analytic in
$z\in\mathbb S_\alpha$ function $\mathsf g(z,y)$ at $z=x+\lambda
s(x-r)$.
 By
 analyticity in $\lambda$ we conclude that  ${\mathsf g}_{\lambda,r}$  is a
 complex symmetric matrix, the Schwarz reflection principle gives $\overline{{\mathsf g}_{\lambda,r}}= {\mathsf g}_{\overline{\lambda},r}$,
 where the overline stands for the complex conjugation. If  $\lambda\in\mathcal D_\alpha$ is non-real, then the matrix  ${\mathsf g}_{\lambda,r}$
  does not correspond to  a Riemannian metric. However,
${\mathsf g}_{\lambda,r}$ is invertible for all $\lambda\in\mathcal
D_\alpha$ provided $r>0$ is sufficiently large. Indeed, $s(x-r)=0$
and $\mathsf g^{-1}_{\lambda,r}(x,y)= \mathsf g^{-1}(x,y)$ for all
$x<r$. On the other hand,   the matrix $\mathsf g(x+\lambda
s(x-r),y)$ from~\eqref{metric} is invertible for $x\geq r$ with
large $r>0$ as it is only little different from the identity matrix;
the latter fact is a consequence of~\eqref{stab} and the inequality
$|x+\lambda s(x-r)|\geq r$.

The derivatives $\partial_x^p\partial_y^q \mathsf
g^{-1}_{\lambda,r}$ are analytic
functions of $\lambda\in\mathcal D_\alpha$. 
From the conditions~\eqref{partan} and~\eqref{stab} 
together with~\eqref{ab3} and~\eqref{metric} it follows that
  \begin{equation}\label{lim}
\left \|\partial_x^p\partial_y^q\bigl(\mathsf
g^{-1}_{\lambda,r}(x,y) - \diag\bigl\{ (1+\lambda
)^{-2},\operatorname{Id} \bigr\}\bigr)\right\|\to 0 \text{ as }
x\to+\infty,\  y\in\overline{\Omega},\ \lambda\in\mathcal D_\alpha,
\end{equation}
and  the estimate
\begin{equation}\label{nstab}
\left \|\partial_x^p\partial_y^q\bigl(\mathsf
g^{-1}_{\lambda,r}(x,y) - \diag\bigl\{ (1+\lambda
s'(x-r))^{-2},\operatorname{Id} \bigr\}\bigr)\right\|\leq \mathrm{
C}_{pq}(r)
\end{equation}
holds uniformly in  $(x,y)\in[r,\infty)\times\overline{\Omega}$ and
$\lambda\in\mathcal D_\alpha$, where $\|\cdot\|$ is the matrix norm
$\|A\|=\max_{\ell m}|a_{\ell m}|$, and $p+|q|\geq 0$. The constants
$\mathrm{C}_{pq}(r)$ in~\eqref{nstab} tend to zero as $r\to+\infty$.

 We define the complex scaling $\vartheta_{\lambda,r}$ for all $\lambda$ in the disk $\mathcal D_\alpha$ by the equality~\eqref{sc},
 where $\varkappa\circ\kappa_{\lambda,r}(x,y)$  is  the value of the analytic in $z\in\mathbb S_\alpha$ function~$\varkappa(z,y)$ at the point $z=x+\lambda s(x-r)$.
  Consider the matrix $\mathsf e_{\lambda,r}=(\vartheta_{\lambda,r}')^\top \vartheta_{\lambda,r}'$. It is clear that for all  $(\zeta,\eta)\in\overline{\mathcal
  G}\setminus\overline{\mathcal C}$ the matrix $\mathsf e_{\lambda,r}(\zeta,\eta)$
coincides with the $(n+1)\times(n+1)$-identity. For all real
$\lambda\in\mathcal D_\alpha$  we have
\begin{equation}\label{rrr}
\mathsf
g_{\lambda,r}(x,y)=\bigl(\varkappa'(x,y)\bigr)^\top\bigl(\mathsf
e_{\lambda,r}\circ\varkappa(x,y)\bigr)\varkappa'(x,y),\quad
(x,y)\in\Bbb R_+\times\overline{\Omega},
\end{equation}
where  $\mathsf g_{\lambda,r}=\varkappa^*\mathsf e_{\lambda,r}$ is
the pullback of the corresponding metric $\mathsf e_{\lambda,r}$ on
$\mathcal C$ by the diffeomorphism $\varkappa$. Therefore $\mathsf
e_{\lambda,r}$ is analytic in $\lambda\in\mathcal D_\alpha$ and
invertible for all sufficiently large $r>0$. By analyticity in
$\lambda$ we conclude that $\mathsf e_{\lambda,r}(\zeta,\eta)$ is a
complex symmetric matrix, the Schwarz reflection principle gives
$\overline{\mathsf e_{\lambda,r}}={\mathsf
e_{\overline{\lambda},r}}$.

Differentiating the equality~\eqref{rrr}, we see that the matrices
$\partial_\zeta^p\partial_\eta^q \mathsf e_{\lambda,r} $ and
$\partial_\zeta^p\partial_\eta^q \mathsf e^{-1}_{\lambda,r}$ are
analytic in $\lambda\in\mathcal D_\alpha$. Moreover,
from~\eqref{lim},~\eqref{nstab}, and our assumptions on $\varkappa$
we obtain
$$
\left\|\partial^p_\zeta\partial^q_\eta\bigl(\mathsf
e^{-1}_{\lambda,r}(\zeta,\eta)- \diag\bigl\{ \bigl(1+\lambda
\bigr)^{-2},\operatorname{Id}\bigr\}\bigr)\right\|\to 0\text{ as
}\zeta\to+\infty,
$$
\begin{equation}\label{cont+}
{\begin{aligned}
&\left\|\partial^p_\zeta\partial^q_\eta\bigl(\mathsf
e^{-1}_{\lambda,r}(\zeta,\eta)- \diag\bigl\{ \bigl(1+\lambda \mathsf
s_r'(\zeta,\eta)\bigr)^{-2},\operatorname{Id}
\bigr\}\bigr)\right\|\leq c(r),
\\
& \text{ where } p+|q|\leq 1,  \text{ and } \ c(r)\to 0 \text{ as }
r\to+\infty.
\end{aligned}}
\end{equation}
Here $(\zeta,\eta)\in\overline{\mathcal C}$ and $\mathsf
s_r'(\zeta,\eta)$ stands for the function $s'(x-r)$ written in the
coordinates $(\zeta,\eta)$. We extend $\mathsf s_r'$ from
$\overline{\mathcal C}$ to $\overline{\mathcal G}$ by zero. Note
that the estimate~\eqref{cont+} remains valid for all
$(\zeta,\eta)\in\overline{\mathcal G}$ and the constant $c(r)$ is
independent of $\lambda\in\mathcal D_\alpha$ and
$(\zeta,\eta)\in\overline{\mathcal G}$. Now we see that for all
sufficiently large $r>0$  the differential operator~\eqref{v2} is
well-defined for all $\lambda$ in the disk $\mathcal D_\alpha$, and
its coefficients are subjected to the estimate~\eqref{cont+}.

\section{Analytic families of operators}\label{secAF}
In this section we study the unbounded operator $\Delta_{\lambda,r}$
in the Hilbert space $L^2(\mathcal G)$ with the usual norm
$$
\|u;
L^2(\mathcal G)\|=\left(\int_{\mathcal G}
|u(\zeta,\eta)|^2\,d\zeta\,d\eta\right)^{1/2}.
$$
  The operator
$\Delta_{\lambda,r}$ is initially defined on the set
$C_0^\infty(\overline{\mathcal G})$. We show that the operator is
closable, and the closure defines an analytic family $\mathcal
D_\alpha\ni\lambda\mapsto\Delta_{\lambda,r}$ of type $(\rm
B)$~\cite{Kato}. In a standard way this implies that the resolvent
$(\Delta_{\lambda,r}-\mu)^{-1}$ is an analytic function of $\lambda$
and $\mu$ on some open subset of $\mathcal D_\alpha\times\Bbb C$.
The latter fact will be used for justification of a limiting
absorption principle in Section~\ref{s6}.

We intend to show that the operator $\Delta_{\lambda,r}$ in
$L^2(\mathcal G)$ with the domain $C_0^\infty(\overline{\mathcal
G})$ is sectorial and to study its Friedrichs extension. With the
operator $\Delta_{\lambda,r}$ we
 associate
 the quadratic form
\begin{equation}\label{qfD}
q_{\lambda,r}[u,u]=\int_{\mathcal G} \bigl\langle\bigl({\det \mathsf
e_{\lambda,r} }\bigr)^{1/2}\mathsf e_{\lambda,r}^{-1} \nabla_{\zeta
\eta}u, \nabla_{\zeta \eta}\bigl({\det \mathsf
e_{\overline{\lambda},r} }\bigr)^{-1/2}
u\bigr\rangle\,d\zeta\,d\eta,
\end{equation}
where $\langle\cdot,\cdot\rangle$ is the Hermitian inner product in
$\mathbb C^{n+1}$ and $u\in C_0^\infty(\overline{\mathcal G})$. Let
$(\cdot,\cdot)$ stand for the inner product in $L^2(\mathcal G)$.
 We represent the
quadratic form as follows:
\begin{equation}\label{qf}
\begin{aligned}
q_{\lambda,r}[u& ,u]=(-\nabla_{\zeta \eta}\cdot\mathsf
e_{\lambda,r}^{-1} \nabla_{\zeta \eta}u,u)\\
&+\int_{\mathcal G}\bigl\langle\bigl({\det \mathsf e_{\lambda,r}
}\bigr)^{1/2}\mathsf e_{\lambda,r}^{-1} \nabla_{\zeta \eta}u,
u\nabla_{\zeta \eta}\bigl({\det \mathsf e_{\overline{\lambda},r}
}\bigr)^{-1/2}\bigr\rangle\,d\zeta\,d\eta.
\end{aligned}
\end{equation}
 For the first term in the right hand side
of~\eqref{qf} we prove the following lemma.
\begin{lemma}\label{l1}
Assume that $r>0$ is sufficiently large. Then  there exist
$\varphi<\pi/2$ and $\delta>0$ such that for all $\lambda\in\mathcal
D_\alpha$ and $u\in C_0^\infty(\overline{\mathcal G})$ we have
$$
|\arg (- \nabla_{\zeta \eta}\cdot\mathsf e_{\lambda,r}^{-1}
\nabla_{\zeta \eta}u,u) |\leq\varphi,\quad \delta(\Delta u,u)\leq
\Re(-\nabla_{\zeta \eta}\cdot\mathsf e_{\lambda,r}^{-1}
\nabla_{\zeta \eta}u,u)\leq \delta^{-1} (\Delta u,u).
$$
In other words, the form $(- \nabla_{\zeta \eta}\cdot\mathsf
e_{\lambda,r}^{-1} \nabla_{\zeta \eta}u,u)$ is sectorial.
\end{lemma}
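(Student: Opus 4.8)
The plan is to integrate by parts and thereby reduce the sectoriality of the form to a pointwise estimate on the leading part of the matrix $\mathsf e_{\lambda,r}^{-1}$. Since $u\in C_0^\infty(\overline{\mathcal G})$ vanishes on $\partial\mathcal G$, Green's first identity gives
$$
(-\nabla_{\zeta\eta}\cdot\mathsf e_{\lambda,r}^{-1}\nabla_{\zeta\eta}u,u)=\int_{\mathcal G}\langle\mathsf e_{\lambda,r}^{-1}\nabla_{\zeta\eta}u,\nabla_{\zeta\eta}u\rangle\,d\zeta\,d\eta,\qquad (\Delta u,u)=\int_{\mathcal G}|\nabla_{\zeta\eta}u|^2\,d\zeta\,d\eta,
$$
so it suffices to show that the integrand $\langle\mathsf e_{\lambda,r}^{-1}\nabla_{\zeta\eta}u,\nabla_{\zeta\eta}u\rangle$ lies, up to a small perturbation, in a fixed sector $\{z:|\arg z|\le\varphi\}$ with $\varphi<\pi/2$ and has real part comparable to $|\nabla_{\zeta\eta}u|^2$, with all constants uniform in $\lambda\in\mathcal D_\alpha$ and in the integration variables.

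Next I would split off the leading term using the estimate \eqref{cont+}. Writing $v=\nabla_{\zeta\eta}u=(v_0,v')$ with $v_0$ the $\zeta$-component and $v'$ the $\eta$-components, \eqref{cont+} with $p=|q|=0$ gives $\mathsf e_{\lambda,r}^{-1}=\diag\{(1+\lambda\mathsf s_r')^{-2},\operatorname{Id}\}+R_{\lambda,r}$ with $\|R_{\lambda,r}\|\le c(r)$, so that
$$
\langle\mathsf e_{\lambda,r}^{-1}v,v\rangle=(1+\lambda\mathsf s_r')^{-2}|v_0|^2+|v'|^2+\langle R_{\lambda,r}v,v\rangle,
$$
where $|\langle R_{\lambda,r}v,v\rangle|\le C\,c(r)\,|v|^2$ and $C$ depends only on $n$. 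The heart of the matter is the geometry of the scalar $a=(1+\lambda\mathsf s_r')^{-2}$. By \eqref{ab2} we have $\mathsf s_r'\in[0,1]$, and $|\lambda|<\sin\alpha<1/\sqrt2$, so $1+\lambda\mathsf s_r'$ lies in the closed disk of radius $\sin\alpha$ about $1$; hence $|\arg(1+\lambda\mathsf s_r')|\le\alpha$ and $1-\sin\alpha\le|1+\lambda\mathsf s_r'|\le1+\sin\alpha$. Consequently $|\arg a|\le2\alpha<\pi/2$ and $(1+\sin\alpha)^{-2}\le|a|\le(1-\sin\alpha)^{-2}$, whence $\Re a\ge\cos(2\alpha)(1+\sin\alpha)^{-2}=:c_1>0$. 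The leading integrand $a|v_0|^2+|v'|^2$ is thus a nonnegative combination of the numbers $a$ and $1$, both of argument in $[-2\alpha,2\alpha]$, so it lies in the convex cone $\{z:|\arg z|\le2\alpha\}$, and its real part satisfies $\min(c_1,1)|v|^2\le\Re(a|v_0|^2+|v'|^2)\le c_2|v|^2$ with $c_2=\max((1-\sin\alpha)^{-2},1)$.

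Finally I would integrate and absorb the remainder. Because the cone $\{z:|\arg z|\le2\alpha\}$ is convex (as $2\alpha<\pi/2$), the integral $Q_0:=\int_{\mathcal G}(a|v_0|^2+|v'|^2)\,d\zeta\,d\eta$ stays in it, so $|\arg Q_0|\le2\alpha$, while $\min(c_1,1)\|\nabla_{\zeta\eta}u\|^2\le\Re Q_0\le c_2\|\nabla_{\zeta\eta}u\|^2$. The full form equals $Q_0+E$ with $|E|\le C\,c(r)\,\|\nabla_{\zeta\eta}u\|^2$, and from $|\Im(Q_0+E)|\le\tan(2\alpha)\Re Q_0+|E|$ together with $\Re(Q_0+E)\ge\Re Q_0-|E|$ the ratio $|\Im(Q_0+E)|/\Re(Q_0+E)$ is bounded by $(\tan(2\alpha)+t)/(1-t)$ with $t\le C\,c(r)/\min(c_1,1)$. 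The main obstacle is exactly this perturbation step, and it is resolved because $c(r)\to0$ as $r\to+\infty$ by \eqref{cont+}: fixing $r$ so large that $t$ is small yields the sector bound with $\varphi=\arctan\bigl((\tan(2\alpha)+t)/(1-t)\bigr)<\pi/2$, and the same smallness makes $|E|$ a small fraction of $\Re Q_0$, so that $\Re(Q_0+E)$ is squeezed between fixed positive multiples of $\|\nabla_{\zeta\eta}u\|^2=(\Delta u,u)$. Choosing $\delta>0$ below those multiples then gives $\delta(\Delta u,u)\le\Re(-\nabla_{\zeta\eta}\cdot\mathsf e_{\lambda,r}^{-1}\nabla_{\zeta\eta}u,u)\le\delta^{-1}(\Delta u,u)$, completing the proof.
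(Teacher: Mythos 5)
Your proposal is correct and follows essentially the same route as the paper: integrate by parts to reduce to the quadratic form of $\mathsf e_{\lambda,r}^{-1}$, use~\eqref{cont+} to split off the diagonal leading part $\diag\{(1+\lambda\mathsf s_r')^{-2},\operatorname{Id}\}$, observe that $(1+\lambda\mathsf s_r')^{-2}$ lies in the sector $|\arg z|\le 2\alpha<\pi/2$ with modulus bounded above and below, and absorb the $O(c(r))$ remainder for $r$ large. The only (immaterial) difference is that you absorb the remainder after integrating, whereas the paper absorbs it pointwise in the estimate~\eqref{st} before integrating.
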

\begin{proof} It is clear that
\begin{equation}\label{eq11+}
\begin{aligned}
&(- \nabla_{\zeta \eta}\cdot\mathsf e_{\lambda,r}^{-1} \nabla_{\zeta
\eta}u,u)=\int_{\mathcal G}\bigl\langle\mathsf e_{\lambda,r}^{-1}
\nabla_{\zeta \eta}u, \nabla_{\zeta
\eta}u\bigr\rangle\,d\zeta\,d\eta.
 \end{aligned}
\end{equation}
Let us estimate the numerical range of the matrix ${\mathsf
e}^{-1}_{\lambda,r}(\zeta,\eta)$. We shall rely on the
estimate~\eqref{cont+}. Let $\xi=\nabla_{\zeta
\eta}u(\zeta,\eta)\in\mathbb C^{n+1}$. Observe that by virtue of
$0\leq s_r'(\zeta,\eta)\leq 1$ and $|\lambda|<\sin\alpha<2^{-1/2}$
we have
 \begin{equation}\label{start}
 \begin{aligned}
&|\xi|^2/4\leq \Bigl|\overline {\xi}\cdot
\diag\bigl\{\bigl(1+\lambda \mathsf
s_r'(\zeta,\eta)\bigr)^{-2},\operatorname{Id}\bigr\}\xi\Bigr|\leq 12
|\xi|^2,
 \\
& \Bigl|\arg \Bigl(\overline {\xi}\cdot \diag\bigl\{\bigl(1+\lambda
\mathsf
s_r'(\zeta,\eta)\bigr)^{-2},\operatorname{Id}\bigr\}\xi\Bigr)\Bigr|<2\alpha.
\end{aligned}
 \end{equation}
Since $r$ is sufficiently large, the constant $c (r)$
in~\eqref{cont+} is small. In particular $c(r)$ meets the estimate
$4(n+1)^{2} c (r)\leq\sin(\sigma/2)$ with some
$\sigma\in(0,\pi/2-2\alpha)$. Then~\eqref{cont+} together
with~\eqref{start} gives
\begin{equation}\label{st}
 \Bigl|\arg \Bigl(\overline {\xi}\cdot \mathsf
e^{-1}_{\lambda,r}(\zeta,\eta){\xi}\Bigr)\Bigr|\leq\varphi<\pi/2,\quad
\delta |\xi|^2\leq\Bigl|\overline {\xi}\cdot
\mathsf e^{-1}_{\lambda,r}(\zeta,\eta){\xi}\Bigr|\leq \delta^{-1} |\xi|^2,\\
\end{equation}
uniformly in $\lambda\in\mathcal D_\alpha$ and
$(\zeta,\eta)\in\overline{\mathcal G}$, where $\varphi=
2\alpha+\sigma$ and
$$
 \delta=\min\Bigl\{1/4-(n+1)^2 c(r),\bigl(12+(n+1)^2
c(r)\bigr)^{-1} \Bigr\}.
$$
Taking into account~\eqref{eq11+} we complete the proof.
\end{proof}

\begin{remark}\label{rem r} Throughout the paper we say that $r>0$ is
sufficiently  large if  the matrix $\mathsf
e_{\lambda,r}(\zeta,\eta)$ is invertible and its inverse meets the
estimates~\eqref{st} uniformly in
$(\zeta,\eta)\in\overline{\mathcal G}$ and $\lambda\in\mathcal
D_\alpha$.
\end{remark}

In the next lemma we show that in the right hand side of~\eqref{qf}
the second term has an arbitrarily small relative bound  with
respect to the first term uniformly in $\lambda\in\mathcal
D_\alpha$.
\begin{lemma}\label{r bound} For any $\epsilon>0$ and $u\in C_0^\infty(\overline{\mathcal G})$  the estimate
$$
\begin{aligned}
 \Bigl|\int_{\mathcal G}\bigl\langle\bigl({\det \mathsf e_{\lambda,r}
}\bigr)^{1/2}\mathsf e_{\lambda,r}^{-1} \nabla_{\zeta \eta}u, &
u\nabla_{\zeta \eta}\bigl({\det \mathsf e_{\overline{\lambda},r}
}\bigr)^{-1/2}\bigr\rangle\,d\zeta\,d\eta\Bigr|
\\
& \leq \epsilon |( -\nabla_{\zeta \eta}\cdot\mathsf
e_{\lambda,r}^{-1} \nabla_{\zeta \eta}u,u)
 |+C\epsilon^{-1}\|u; L^2(\mathcal G)\|^2
\end{aligned}
$$
holds, where the constant $C$ is independent of $\epsilon$, $u$, and
$\lambda\in\mathcal D_\alpha$.
\end{lemma}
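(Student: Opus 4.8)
The plan is to bound the integrand pointwise by a constant multiple of $|\nabla_{\zeta\eta} u|\,|u|$, with the constant uniform in $\lambda\in\mathcal D_\alpha$, then split the resulting $\int_{\mathcal G}|\nabla_{\zeta\eta}u|\,|u|$ by Young's inequality and absorb the gradient part into the first term of~\eqref{qf} by means of the sectoriality estimate of Lemma~\ref{l1}. The quantity $\nabla_{\zeta\eta}(\det\mathsf e_{\overline\lambda,r})^{-1/2}$ is a genuinely lower order factor, and the whole point is that it is bounded rather than merely square-integrable.

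First I would verify that the three factors in the integrand are bounded on $\overline{\mathcal G}$ uniformly in $\lambda\in\mathcal D_\alpha$. The matrix $\mathsf e_{\lambda,r}^{-1}$ is bounded by~\eqref{cont+}, since its zeroth order part differs from $\diag\{(1+\lambda\mathsf s_r')^{-2},\operatorname{Id}\}$ by at most $c(r)$ and $|1+\lambda\mathsf s_r'|$ is bounded below because $|\lambda|<\sin\alpha<2^{-1/2}$ and $0\leq\mathsf s_r'\leq1$. As $\mathsf e_{\lambda,r}$ is invertible with uniformly bounded inverse, $\det\mathsf e_{\lambda,r}$ is bounded and bounded away from zero, so $(\det\mathsf e_{\lambda,r})^{1/2}$ is bounded. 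The crucial factor is $\nabla_{\zeta\eta}(\det\mathsf e_{\overline\lambda,r})^{-1/2}=-\tfrac12(\det\mathsf e_{\overline\lambda,r})^{-3/2}\nabla_{\zeta\eta}(\det\mathsf e_{\overline\lambda,r})$: its boundedness requires the first order part $p+|q|\leq1$ of~\eqref{cont+}. Indeed, $\nabla_{\zeta\eta}(\det\mathsf e_{\overline\lambda,r})$ is a polynomial in the entries of $\mathsf e_{\overline\lambda,r}$ and their first derivatives, and the entries of $\mathsf e_{\overline\lambda,r}=(\mathsf e_{\overline\lambda,r}^{-1})^{-1}$ and of $\partial\mathsf e_{\overline\lambda,r}=-\mathsf e_{\overline\lambda,r}(\partial\mathsf e_{\overline\lambda,r}^{-1})\mathsf e_{\overline\lambda,r}$ are all bounded by~\eqref{cont+}, uniformly in $\lambda$. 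Hence there is a constant $M$, independent of $\lambda$ and $u$, with $|(\det\mathsf e_{\lambda,r})^{1/2}\mathsf e_{\lambda,r}^{-1}\nabla_{\zeta\eta}u|\,|\nabla_{\zeta\eta}(\det\mathsf e_{\overline\lambda,r})^{-1/2}|\leq M|\nabla_{\zeta\eta}u|$.

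With this pointwise bound, Cauchy--Schwarz in $\mathbb C^{n+1}$ shows that the modulus of the integral is at most $M\int_{\mathcal G}|\nabla_{\zeta\eta}u|\,|u|\,d\zeta\,d\eta$. Applying $ab\leq\tfrac{t}{2}a^2+\tfrac1{2t}b^2$ with a parameter $t>0$ to be chosen, this is at most $\tfrac{Mt}{2}\int_{\mathcal G}|\nabla_{\zeta\eta}u|^2\,d\zeta\,d\eta+\tfrac{M}{2t}\|u;L^2(\mathcal G)\|^2$. Now I would use integration by parts, $\int_{\mathcal G}|\nabla_{\zeta\eta}u|^2\,d\zeta\,d\eta=(\Delta u,u)$, together with the lower bound of Lemma~\ref{l1}, which gives $(\Delta u,u)\leq\delta^{-1}\Re(-\nabla_{\zeta\eta}\cdot\mathsf e_{\lambda,r}^{-1}\nabla_{\zeta\eta}u,u)\leq\delta^{-1}|(-\nabla_{\zeta\eta}\cdot\mathsf e_{\lambda,r}^{-1}\nabla_{\zeta\eta}u,u)|$. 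Choosing $t=2\epsilon\delta/M$ makes the first term exactly $\epsilon|(-\nabla_{\zeta\eta}\cdot\mathsf e_{\lambda,r}^{-1}\nabla_{\zeta\eta}u,u)|$, while the second becomes $C\epsilon^{-1}\|u;L^2(\mathcal G)\|^2$ with $C=M^2/(4\delta)$ independent of $\epsilon$, $u$, and $\lambda$, which is the asserted estimate.

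The main obstacle is the uniform boundedness, in $\lambda$ and over all of $\overline{\mathcal G}$, of $\nabla_{\zeta\eta}(\det\mathsf e_{\overline\lambda,r})^{-1/2}$; this is where the full strength of the first order estimate~\eqref{cont+} is needed, and one must also confirm that $\det\mathsf e_{\overline\lambda,r}$ stays bounded away from zero so that the negative powers and the chosen branch of the square root cause no trouble. Everything else is routine, and the $\lambda$-independence of the final constant $C$ is inherited directly from the $\lambda$-independence of $c(r)$ in~\eqref{cont+} and of the constant $\delta$ in Lemma~\ref{l1}.
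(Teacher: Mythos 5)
Your proof is correct, but it takes a genuinely different (and somewhat more elementary) route than the paper's. The paper first applies the integral Cauchy--Schwarz inequality to peel off $\|u;L^2(\mathcal G)\|$, and then controls the remaining quantity $|\langle\mathsf e_{\lambda,r}^{-1}\nabla_{\zeta\eta}u,\nabla_{\zeta\eta}(\det\mathsf e_{\overline\lambda,r})^{-1/2}\rangle|^2$ \emph{pointwise} by $\Re\langle\mathsf e_{\lambda,r}^{-1}\nabla_{\zeta\eta}u,\nabla_{\zeta\eta}u\rangle$ via the generalized Cauchy--Schwarz inequality for sectorial forms, inequality~\eqref{add+} (the Kato Ch.~VI.2 trick $|\overline\tau\cdot\mathsf e^{-1}\xi|^2\leq(1+\tan\varphi)^2\Re\{\overline\xi\cdot\mathsf e^{-1}\xi\}\,\Re\{\overline\tau\cdot\mathsf e^{-1}\tau\}$); integrating this produces exactly $\Re(-\nabla_{\zeta\eta}\cdot\mathsf e_{\lambda,r}^{-1}\nabla_{\zeta\eta}u,u)$ without ever passing through the Euclidean Dirichlet form. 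You instead bound the integrand crudely by $M|\nabla_{\zeta\eta}u|\,|u|$, reduce to $\int|\nabla_{\zeta\eta}u|^2=(\Delta u,u)$, and then convert back to $|(-\nabla_{\zeta\eta}\cdot\mathsf e_{\lambda,r}^{-1}\nabla_{\zeta\eta}u,u)|$ using the two-sided comparison of Lemma~\ref{l1}. What your route buys is that it avoids~\eqref{add+} entirely and only needs sup-norm bounds on the coefficients; what the paper's route buys is that it stays intrinsic to the form $\mathsf e_{\lambda,r}^{-1}$ and does not need the lower bound $\delta(\Delta u,u)\leq\Re(-\nabla_{\zeta\eta}\cdot\mathsf e_{\lambda,r}^{-1}\nabla_{\zeta\eta}u,u)$ from Lemma~\ref{l1} as an input. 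Both arguments rest on the same two uniform facts from~\eqref{cont+} and~\eqref{st} --- the boundedness of $\nabla_{\zeta\eta}(\det\mathsf e_{\overline\lambda,r})^{-1/2}$ (which you correctly identify as the point requiring the first-order part of~\eqref{cont+}, together with $\det\mathsf e_{\lambda,r}$ staying in a sector away from zero) and the constants $\delta,\varphi$ --- so the $\lambda$- and $\epsilon$-independence of $C$ comes out the same way in either version.
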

\begin{proof} We have
\begin{equation}\label{add++}
\begin{aligned}
\Bigl | &\int_{\mathcal G}\bigl\langle\bigl({\det \mathsf
e_{\lambda,r} }\bigr)^{1/2}\mathsf e_{\lambda,r}^{-1} \nabla_{\zeta
\eta}u, u\nabla_{\zeta \eta}\bigl({\det \mathsf
e_{\overline{\lambda},r}
}\bigr)^{-1/2}\bigr\rangle\,d\zeta\,d\eta\Bigr|
\\
& \leq \texttt{C} \Bigl(\int_{\mathcal G}|\bigl\langle\mathsf
e_{\lambda,r}^{-1} \nabla_{\zeta \eta}u, \nabla_{\zeta
\eta}\bigl({\det \mathsf e_{\overline{\lambda},r}
}\bigr)^{-1/2}\bigr\rangle|^2\,d\zeta\,d\eta \Bigr)^{1/2}
\Bigl(\int_{\mathcal G} |u|^2\,d\zeta\,d\eta \Bigr)^{1/2}
\\
&\leq \texttt{C}\Bigl(\tilde \epsilon\int_{\mathcal
G}|\bigl\langle\mathsf e_{\lambda,r}^{-1} \nabla_{\zeta \eta}u,
\nabla_{\zeta \eta}\bigl({\det \mathsf e_{\overline{\lambda},r}
}\bigr)^{-1/2}\bigr\rangle|^2\,d\zeta\,d\eta+
\tilde\epsilon^{-1}\|u; L^2(\mathcal G)\|^2\Bigr)
\end{aligned}
\end{equation}
with  arbitrarily small $\tilde\epsilon>0$ and
$\texttt{C}=\sup_{(\zeta,\eta)}\bigl({\det \mathsf e_{\lambda,r}
(\zeta,\eta)}\bigr)^{1/2}<\infty$, cf.~\eqref{cont+}.

From \eqref{st} it follows that
$$
\bigl|\Im\{\overline {\xi}\cdot \mathsf
e^{-1}_{\lambda,r}(\zeta,\eta){\xi}\bigr\}\bigr|\leq
(\tan\varphi)\Re \{\overline {\xi}\cdot \mathsf
e^{-1}_{\lambda,r}(\zeta,\eta){\xi}\},
$$
where the form $\Re \{\overline {\xi}\cdot \mathsf
e^{-1}_{\lambda,r}(\zeta,\eta){\xi}\bigr\}$ defines an inner product
in $\Bbb C^{n+1}$. This and the Cauchy-Schwarz inequality give
\begin{equation}\label{add+}
\bigl|\overline {\tau}\cdot \mathsf
e^{-1}_{\lambda,r}(\zeta,\eta){\xi}\bigr|^2 \leq
(1+\tan\varphi)^{2}\Re \{\overline {\xi}\cdot \mathsf
e^{-1}_{\lambda,r}(\zeta,\eta){\xi}\bigr\}\,\Re \{\overline
{\tau}\cdot \mathsf e^{-1}_{\lambda,r}(\zeta,\eta){\tau}\bigr\},
\end{equation}
cf.~\cite[Chapter VI.2]{Kato}. We  substitute $\xi=\nabla_{\zeta
\eta}u(\zeta,\eta)$ and $\tau=\nabla_{\zeta \eta}\bigl({\det \mathsf
e_{{\lambda},r} (\zeta,\eta)}\bigr)^{-1/2}$. Thanks to~\eqref{cont+}
we have the uniform bound
$$
|\nabla_{\zeta \eta}\bigl({\det \mathsf e_{{\lambda},r}
(\zeta,\eta)}\bigr)^{-1/2}|^2\leq c,\quad \lambda\in\mathcal
D_\alpha,\ (\zeta,\eta)\in\overline{\mathcal G}.
$$
This together with~\eqref{add+} and~\eqref{st} implies
$$
|\bigl\langle\mathsf e_{\lambda,r}^{-1} \nabla_{\zeta \eta}u,
\nabla_{\zeta \eta}\bigl({\det \mathsf e_{\overline{\lambda},r}
}\bigr)^{-1/2}\bigr\rangle|^2\leq
c\delta^{-1}(1+\tan\varphi)^{2}\Re\bigl\langle\mathsf
e_{\lambda,r}^{-1} \nabla_{\zeta \eta}u, \nabla_{\zeta
\eta}u\bigr\rangle.
$$
Now we make use of~\eqref{add++} and establish the assertion for
$C=\texttt{C}^2 c\delta^{-1}(1+\tan\varphi)^2$ and an arbitrarily
small $\epsilon=\texttt{C} c\delta^{-1} (1+\tan\varphi)^2\tilde
\epsilon$.
 \end{proof}

As a consequence of the equality~\eqref{qf} and
Lemmas~\ref{l1},~\ref{r bound} for all sufficiently large $r>0$ we
obtain
\begin{equation}\label{i1}
 |\arg\bigl( q_{\lambda,r} [u,u]+\gamma \|u; L^2(\mathcal G)\|^2\bigr)|\leq \varphi<\pi/2,
\end{equation}
\begin{equation}\label{i2}
\begin{aligned}
\delta q_{0,r}[u,u]-\gamma\|u; L^2(\mathcal G)\|^2 & \leq \Re
q_{\lambda,r} [u,u]
 \leq \delta^{-1}(q_{0,r}[u,u]+\|u; L^2(\mathcal G)\|^2)
\end{aligned}
\end{equation}
with some angle $\varphi$ and some positive constants $\delta$ and
$\gamma$, which are independent of $u\in
C_0^\infty(\overline{\mathcal G})$ and $\lambda\in\mathcal
D_\alpha$. The symmetric form $q_{0,r}[u,u]=\int_{\mathcal G}\langle
\nabla_{\zeta \eta}u, \nabla_{\zeta
\eta}u\bigr\rangle\,d\zeta\,d\eta $ is independent of $r$, it
corresponds to the Dirichlet Laplacian $\Delta\equiv\Delta_{0,r}$.
Clearly, $q_{\lambda,r} [u,u]=(\Delta_{\lambda,r}u,u)$.
Estimate~\eqref{i1} implies that the numerical range
$$\{\mu\in\Bbb C: \mu=(\Delta_{\lambda,r}u,u), u\in
C_0^\infty(\overline{\mathcal G}), \|u;L^2(\mathcal G)\|=1\}$$ is a
subset of the sector $\{\mu\in\Bbb
C:|\arg(\mu+\gamma)|\leq\varphi<\pi/2\}$. By definition this means
that the operator $\Delta_{\lambda,r}$ with the domain
$C_0^\infty(\overline{\mathcal G})$ is sectorial.

We introduce the Sobolev space
$\lefteqn{\stackrel{\circ}{\phantom{\,\,>}}}H^1(\mathcal G)$ as the
completion of the set $C_0^\infty(\overline{\mathcal G})$ with
respect to the norm
$$
\|u; \lefteqn{\stackrel{\circ}{\phantom{\,\,>}}}H^1(\mathcal
G)\|=\sqrt{q_{0,r}[u,u]+\|u;L^2(\mathcal G)\|^2}.
$$
Recall that {\it i}) a sequence $\{u_j\}$ is said to be
$q_{\lambda,r}-$con\-ver\-gent, if $u_j$ is in the domain of $
q_{\lambda,r}$, $\|u_j-u; L^2(\mathcal G)\|\to 0$ and $q_{\lambda,r}
[u_j-u_m,u_j-u_m]\to 0$ as $j,m\to \infty$; {\it ii}) the  form $
q_{\lambda,r}$ is closed, if every $ q_{\lambda,r}$-convergent
sequence $\{u_j\}$ has a limit $u$ in the domain of $
q_{\lambda,r}$, and $q_{\lambda,r}[u-u_j,u-u_j]\to 0$.
From~\eqref{i1},~\eqref{i2} it immediately follows that
$q_{\lambda,r}$ with the domain
$\lefteqn{\stackrel{\circ}{\phantom{\,\,>}}}H^1(\mathcal G)$ is a
 closed densely defined sectorial form~\cite{Kato,Simon Reed iv}, and its sector $\{\mu\in\Bbb C:|\arg(\mu+\gamma)|\leq\varphi\}$ is independent of
$\lambda\in\mathcal D_\alpha$. As known~\cite[Chapter VI.2.1]{Kato},
to every closed densely defined sectorial form  there corresponds a
unique m-sectorial operator. Namely, to the form $q_{\lambda,r}$
there corresponds a unique m-sectorial operator $\Delta_{\lambda,r}$
in $L^2(\mathcal M)$ such that its sector  is the sector of
$q_{\lambda,r}$, the domain $\mathrm D (\Delta_{\lambda,r})$ is
dense in $\lefteqn{\stackrel{\circ}{\phantom{\,\,>}}}H^1(\mathcal
G)$, and $\mathsf q_{\lambda,r}[u,v]=(\Delta_{\lambda,r}u,v)$ for
all $u\in \mathrm D(\Delta_{\lambda,r})$ and
$v\in\lefteqn{\stackrel{\circ}{\phantom{\,\,>}}}H^1(\mathcal G)$.
 (Here and elsewhere
m-sectorial means that the numerical range $\{ \mu=(Au,u)_{\mathcal
H}\in\Bbb C: u\in\mathrm D(A),(u,u)_{\mathcal H}=1 \}$ and the
spectrum $\sigma(A)$ of a closed unbounded operator $A$ in a Hilbert
space $\mathcal H$ with the inner product $(\cdot,\cdot)_{\mathcal
H}$ both lie in some sector $\{\mu\in \Bbb C: \arg(\mu-\gamma)\leq
\varphi\}$ with $\gamma\in\Bbb R$ and $\varphi<\pi/2$.) In
particular, to the symmetric nonnegative form $ q_{0,r}$ there
corresponds the selfadjoint Dirichlet Laplacian
$\Delta\equiv\Delta_{0,r}$. The m-sectorial operator
$\Delta_{\lambda,r}$ in $L^2(\mathcal M)$ is the Friedrichs
extension of the sectorial operator $\Delta_{\lambda,r}$ defined on
 $C_0^\infty(\overline{\mathcal G})$, see~\cite[Chapter
VI.2.3]{Kato}. As we show in assertion 1 of the next proposition the
set $C_0^\infty(\overline{\mathcal G})$ is a core of the Friedrichs
extension.
 \begin{proposition}\label{sectorial} Assume that $r>0$ is sufficiently large. Then the following
 assertions are valid.
\begin{enumerate}
\item For  $\lambda\in\mathcal
 D_\alpha$ the m-sectorial operator $\Delta_{\lambda, r}$ in $L^2(\mathcal
G)$ with the domain $\mathrm D(\Delta_{\lambda, r})$ is the closure
of the operator $\Delta_{\lambda,r}$ defined on the set
$C_0^\infty(\overline{\mathcal G})$.

\item The family of m-sectorial operators $\mathcal D_\alpha\ni\lambda\mapsto
\Delta_{\lambda,r}$ in $L^2(\mathcal G)$ is an analytic family of
type $(\rm B)$.

\item  The resolvent $\Gamma\ni (\lambda,\mu)\mapsto (\Delta_{\lambda,r}-\mu)^{-1}: L^2(\mathcal G)\to L^2(\mathcal G)$ is an analytic
    function of two variables on the set $\Gamma=\bigl\{(\lambda,\mu): \lambda\in\mathcal D_\alpha, \mu\in\Bbb C\setminus\sigma(
    \Delta_{\lambda,r})\bigr\}$, where $\sigma(
    \Delta_{\lambda,r})$ is the spectrum of $\Delta_{\lambda,r}$.
\end{enumerate}
\end{proposition}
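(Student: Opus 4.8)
The plan is to establish the three assertions in sequence, building on the sectoriality estimates already derived in Lemmas~\ref{l1} and~\ref{r bound} and the resulting bounds~\eqref{i1},~\eqref{i2}. For assertion~1, I would argue that the Friedrichs extension of the sectorial operator $\Delta_{\lambda,r}\upharpoonright C_0^\infty(\overline{\mathcal G})$ coincides with its closure. Since the estimate~\eqref{i2} shows that the graph norm of $q_{\lambda,r}$ is equivalent to the $\lefteqn{\stackrel{\circ}{\phantom{\,\,>}}}H^1(\mathcal G)$ norm uniformly in $\lambda\in\mathcal D_\alpha$, the form domain is exactly $\lefteqn{\stackrel{\circ}{\phantom{\,\,>}}}H^1(\mathcal G)$, on which $C_0^\infty(\overline{\mathcal G})$ is dense by construction. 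Thus $C_0^\infty(\overline{\mathcal G})$ is a core for the form. To upgrade this to a core for the operator, I would use elliptic regularity: on $\mathcal G_r$ the operator coincides with the genuine Dirichlet Laplacian, while on $\mathcal G\setminus\mathcal G_r$ the coefficients $\mathsf e^{-1}_{\lambda,r}$ satisfy~\eqref{cont+} and the numerical-range bounds~\eqref{st}, so the operator is uniformly elliptic in the classical sense with smooth coefficients. Interior and boundary elliptic estimates then let one approximate any $u$ in the operator domain by smooth compactly supported functions in graph norm.

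For assertion~2, I would invoke the definition of an analytic family of type~$(\mathrm B)$ from Kato~\cite[Chapter VII.4]{Kato}: one needs a family of closed sectorial forms $q_{\lambda,r}$, all with a common dense form domain independent of $\lambda$, whose sectors lie in a fixed half-plane uniformly, and such that $\lambda\mapsto q_{\lambda,r}[u,u]$ is analytic for each fixed $u$ in the form domain. The common form domain $\lefteqn{\stackrel{\circ}{\phantom{\,\,>}}}H^1(\mathcal G)$ and the uniform sector are already furnished by~\eqref{i1},~\eqref{i2}. The analyticity in $\lambda$ of each $q_{\lambda,r}[u,u]$ for fixed $u\in C_0^\infty(\overline{\mathcal G})$ follows from the established analyticity in $\lambda\in\mathcal D_\alpha$ of the coefficients $\mathsf e_{\lambda,r}$, $\mathsf e^{-1}_{\lambda,r}$, and $(\det\mathsf e_{\lambda,r})^{\pm1/2}$, together with the fact that the integration in~\eqref{qfD} is over the fixed compact support of $u$, so differentiation under the integral sign and a dominated-convergence argument yield analyticity. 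By density of $C_0^\infty(\overline{\mathcal G})$ in $\lefteqn{\stackrel{\circ}{\phantom{\,\,>}}}H^1(\mathcal G)$ and the uniform boundedness of the forms, analyticity extends to all $u$ in the form domain.

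For assertion~3, once the family is of type~$(\mathrm B)$, Kato's theory immediately gives that the resolvent $(\Delta_{\lambda,r}-\mu)^{-1}$ is an analytic operator-valued function of $\lambda$ on $\mathcal D_\alpha$ for each fixed $\mu$ in the resolvent set, and the standard Neumann-series argument gives joint analyticity in $(\lambda,\mu)$ on the open set $\Gamma$. Concretely, I would fix $(\lambda_0,\mu_0)\in\Gamma$, write $(\Delta_{\lambda,r}-\mu)=(\Delta_{\lambda_0,r}-\mu_0)\bigl(\mathrm I+(\Delta_{\lambda_0,r}-\mu_0)^{-1}(\Delta_{\lambda,r}-\Delta_{\lambda_0,r}-(\mu-\mu_0))\bigr)$, and observe that the perturbation is small in the appropriate sense for $(\lambda,\mu)$ near $(\lambda_0,\mu_0)$, so the inverse is given by a convergent Neumann series whose terms are jointly analytic.

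The main obstacle I anticipate is assertion~1 — specifically, proving that $C_0^\infty(\overline{\mathcal G})$ is a core for the m-sectorial \emph{operator} (not merely the form). The form-core statement is essentially automatic from~\eqref{i2}, but promoting it to an operator-core requires the elliptic regularity argument on the unbounded, non-cylindrical domain $\mathcal G$ with the complex, non-selfadjoint coefficients $\mathsf e^{-1}_{\lambda,r}$. The care needed is in handling the boundary regularity near $\partial\mathcal G$ and the behaviour at infinity, where one must exploit the uniform estimate~\eqref{cont+} that the coefficients approach the constant diagonal matrix $\diag\{(1+\lambda)^{-2},\operatorname{Id}\}$, ensuring the ellipticity constants do not degenerate as $\zeta\to+\infty$.
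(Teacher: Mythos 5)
Your proposal is correct and follows essentially the same route as the paper: sectoriality from Lemmas~\ref{l1} and~\ref{r bound}, verification of Kato's definition of a type $(\mathrm B)$ family via analyticity of $\lambda\mapsto q_{\lambda,r}[u,u]$ on the fixed form domain, and the standard resolvent argument for joint analyticity in $(\lambda,\mu)$. The one refinement worth noting concerns assertion~1: rather than approximating an arbitrary $u\in\mathrm D(\Delta_{\lambda,r})$ directly by smooth compactly supported functions (such $u$ is a priori only $H^2_{\mathrm{loc}}$, not smooth), the paper first passes to the dense subset $(\Delta_{\lambda,r}-\mu)^{-1}C_0^\infty(\overline{\mathcal G})$, whose elements are genuinely smooth by elliptic regularity with smooth data, and only then applies the cutoff argument.
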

\begin{proof} Consider the domain $\mathrm D
(\Delta_{\lambda,r})$ as a Hilbert space endowed with the graph norm
$\|u;\mathrm D (\Delta_{\lambda,r})\|=\|u; L^2(\mathcal
G)\|+\|\Delta_{\lambda,r}u;L^2(\mathcal G)\|$. Let $\mu$ be a point
outside of the sector of the m-sectorial operator
$\Delta_{\lambda,r}$. Then the set
$$
\mathrm C(\Delta_{\lambda,r})=\{u: u=(\Delta_{\lambda,r}-\mu)^{-1}
f, f\in C_0^\infty(\overline{\mathcal G})\}
$$
is dense in $\mathrm D (\Delta_{\lambda,r})$  because the resolvent
$(\Delta_{\lambda,r}-\mu)^{-1}: L^2(\mathcal G)\to \mathrm
D(\Delta_{\lambda,r})$ is bounded and the set
$C_0^\infty(\overline{\mathcal G})$ is dense in $L^2(\mathcal G)$.
 From~\eqref{st} it follows the estimate
 \begin{equation}\label{strong ell}
 \Re(\xi\cdot\mathsf e^{-1}_{\lambda,r}(\zeta,\eta)\xi)\geq
 c|\xi|^2,\quad\xi\in\Bbb R^{n+1}, \lambda\in\mathcal D_\alpha,(\zeta,\eta)\in \overline{\mathcal G},
 \end{equation}
 on the principal symbol of  $\Delta_{\lambda,r}$, where $c>0$. Hence $\Delta_{\lambda,r}$ is a strongly
 elliptic operator. As is well-known, a strongly elliptic operator and the Dirichlet
boundary condition set up an elliptic boundary value problem,
e.g.~\cite{Lions Magenes}. The usual argument on the regularity of
solutions to the elliptic boundary value problems~\cite{Lions
Magenes,KozlovMazyaRossmann} implies that the set $\mathrm
C(\Delta_{\lambda,r})$ consists of smooth in $\overline{\mathcal G}$
functions $u$ with $u\upharpoonright_{\partial\mathcal G}=0$.
Multiplying $u\in \mathrm C(\Delta_{\lambda,r})$ by appropriate
cutoff functions $\chi_j$ with expanding compact supports $\supp
\chi_j\subset \supp\chi_{j+1}$, it is easy to see that for any $u\in
\mathrm C(\Delta_{\lambda,r})$ there is a sequence $\{\chi_j
u\}_{j=1}^\infty$ such that $\chi_j u\in
C_0^\infty(\overline{\mathcal G})$ tends to $u$ in $\mathrm
D(\Delta_{\lambda,r})$ as $j\to +\infty$.  Assertion 1 is proven.

 The  family $\mathcal D_\alpha\ni\lambda\mapsto q_{\lambda,r}$ is
analytic in the sense of Kato (i.e. $q_{\lambda,r}$ is a closed
densely defined sectorial form, its domain
$\lefteqn{\stackrel{\circ}{\phantom{\,\,>}}}H^1(\mathcal G)$ is
independent of  $\lambda$, and the function $\mathcal
D_\alpha\ni\lambda\mapsto q_{\lambda,r}[u,u]$ is analytic for any
$u\in\lefteqn{\stackrel{\circ}{\phantom{\,\,>}}}H^1(\mathcal G)$).
By definition~\cite{Kato, Simon Reed iv} this means that the family
of m-sectorial operators $\mathcal D_\alpha\ni\lambda\mapsto
\Delta_{\lambda,r}$ is an analytic family of type $(\rm B)$. This
proves assertion 2.

As is well-known~\cite{Kato, Simon Reed iv},  any analytic family of
type $(\rm B)$ is also an analytic family of operators in the sense
of Kato. Now a standard argument justifies assertion~3; see, e.g.,
\cite[Theorem XII.7]{Simon Reed iv}.
\end{proof}

\section{Spaces of analytic functions}\label{s3} We have shown that
for all $\mu$ outside the sector of the family $\mathcal
D_\alpha\ni\lambda\mapsto\Delta_{\lambda,r}$ of m-sectorial
operators the resolvent $(\Delta_{\lambda,r}-\mu)^{-1}$ is an
analytic function of $\lambda$, see Proposition~\ref{sectorial}.3.
In order to get some relations between the resolvents
$(\Delta-\mu)^{-1}$ and $(\Delta_{\lambda,r}-\mu)^{-1}$ we will use
a sufficiently large Hilbert space $\mathcal H_\alpha(\mathcal G)$
of analytic functions
\begin{equation}
\label{stars}\mathcal D_\alpha\ni\lambda\mapsto
f\circ\vartheta_{\lambda,r}\in L^2(\mathcal G);
\end{equation}
 here $\vartheta_{\lambda,r}$ is the complex scaling~\eqref{sc}.  The goal of this section is two-fold:
\begin{enumerate}
\item To
introduce a Hilbert space $\mathcal H_\alpha(\mathcal G)$, which is
sufficiently large in the sense that for any $\lambda\in\mathcal
D_\alpha$ and $r>0$ the set $\{ f\circ \vartheta_{\lambda,r}\in
L^2(\mathcal G):f\in\mathcal H_\alpha(\mathcal G)\}$ is dense in
$L^2(\mathcal G)$;
\item To derive the uniform in $\lambda\in\mathcal
D_\alpha$ and $f\in\mathcal H_\alpha(\mathcal G)$ estimate
\begin{equation}\label{stars+}
\|f\circ\vartheta_{\lambda,r}; L^2(\mathcal G)\|\leq C_r\|f;
\mathcal H_\alpha(\mathcal G) \|,\quad r>0.
\end{equation}
\end{enumerate}

Introduce the Hardy class $\mathfrak H(\Bbb S_\alpha)$  of all
analytic functions $\Bbb S_\alpha\ni z\mapsto F(z)\in L^2(\Omega)$
 satisfying the uniform in $\psi$ estimate
\begin{equation}\label{hardy}
\int_0^\infty \| F(e^{i\psi} x);L^2(\Omega)\|^2\,dx\leq
C_F,\quad\psi\in(-\alpha,\alpha).
\end{equation}
 Below we cite some facts
from the theory of Hardy classes.
\begin{lemma}\label{l3}
\begin{enumerate}
\item Every $ F\in \mathfrak H(\Bbb S_\alpha)$ has boundary limits $
F_\pm\in L^2(\Bbb R_+\times{\Omega})$ such that
$$
\int_0^\infty\| F(e^{i\psi}x)- F_\pm(x);L^2(\Omega)\|^2\,dx\to 0
\text{  as  }\psi\to\pm\alpha.
$$

\item The Hardy class $\mathfrak H(\Bbb S_\alpha)$ endowed with the
norm
$$
\| F; \mathfrak H(\Bbb S_\alpha)\|=\bigl(\|F_-;L^2(\Bbb
R_+\times\Omega)\|^2+\|F_+;L^2(\Bbb
R_+\times{\Omega})\|^2\bigr)^{1/2}
$$
is a Hilbert space.

\item For every compact set $\mathfrak K\subset\Bbb S_\alpha$ there
is an independent of $F\in \mathfrak H(\Bbb S_\alpha)$ constant
$C(\mathfrak K)$ such that for all $z\in\mathfrak K$ we have
 $$
 \| F(z); L^2(\Omega)\|\leq C(\mathfrak K)\|F; \mathfrak H(\Bbb S_\alpha)\|.
 $$

\item For any $F\in \mathfrak H(\Bbb S_\alpha)$, $z\in {\Bbb
S_\alpha}$, and $\psi\in(-\alpha,\alpha)$ we have
$$
\int_0^\infty \| F(z+ e^{i\psi} x);L^2(\Omega)\|^2\,dx\leq \| F;
\mathfrak H(\Bbb S_\alpha)\|^2.
$$
\end{enumerate}
\end{lemma}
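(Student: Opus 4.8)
The plan is to reduce everything to the classical theory of Hardy spaces by a single conformal change of variables and then to transport each assertion back. Setting $w=\log z$, the sector $\Bbb S_\alpha$ is mapped biholomorphically onto the horizontal strip $\{w:|\Im w|<\alpha\}$, and the rays $\{e^{i\psi}x:x>0\}$ become the horizontal lines $\Im w=\psi$. Under the weighted substitution $G(w)=e^{w/2}F(e^{w})$ a direct computation shows $\int_{-\infty}^{\infty}\|G(t+i\psi);L^2(\Omega)\|^2\,dt=\int_0^\infty\|F(e^{i\psi}x);L^2(\Omega)\|^2\,dx$, so $F\mapsto G$ is an isometry of $\mathfrak H(\Bbb S_\alpha)$ onto the $L^2(\Omega)$-valued Hardy space $H^2$ of the strip (which in turn maps to $H^2$ of a half-plane or disk by a further standard conformal map). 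Throughout I would note that the scalar Hardy theory extends verbatim to functions valued in the separable Hilbert space $L^2(\Omega)$: Fatou-type boundary value theorems, the Poisson and Cauchy representations, and the Paley--Wiener theorem use only the Hilbert structure of the target, and may if desired be reduced to the scalar case by pairing with elements of $L^2(\Omega)$.

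With this dictionary in hand, assertions 1 and 2 are the standard boundary-value and completeness statements for $H^2$. For assertion 1, the edges $\Im w=\pm\alpha$ carry $L^2$ boundary limits attained in $L^2$-norm; since the factor $e^{w/2}$ is bounded and bounded away from zero on each horizontal line and has trivial boundary behaviour, $F$ inherits boundary limits $F_\pm\in L^2(\Bbb R_+\times\Omega)$ with the stated $L^2$-convergence as $\psi\to\pm\alpha$. For assertion 2, the norm derives from the inner product $(F,H)=(F_-,H_-)_{L^2(\Bbb R_+\times\Omega)}+(F_+,H_+)_{L^2(\Bbb R_+\times\Omega)}$, so $\mathfrak H(\Bbb S_\alpha)$ is a pre-Hilbert space; completeness follows the usual route: a Cauchy sequence has convergent boundary data, hence (by assertion 3) converges uniformly on compact subsets to an analytic limit, and passing to the limit in the Poisson representation identifies the limit as an element of $\mathfrak H(\Bbb S_\alpha)$ with the correct boundary values.

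Assertion 3 is the reproducing-kernel (interior) estimate for $H^2$. I would obtain it either from the Cauchy/Poisson integral representation, which bounds $\|F(z);L^2(\Omega)\|$ by $\|F;\mathfrak H(\Bbb S_\alpha)\|$ times a kernel norm that is locally bounded in the open sector, or, more self-containedly, from the sub-mean-value property: $\|F(\cdot);L^2(\Omega)\|^2$ is subharmonic on $\Bbb S_\alpha$ (being $\|F\|^2$ for an analytic Hilbert-valued $F$), so its value at $z$ is controlled by its average over a disk contained in $\Bbb S_\alpha$, and that average is in turn dominated by the ray integrals defining the norm. For $z$ in a fixed compact $\mathfrak K\subset\Bbb S_\alpha$ the distance to $\partial\Bbb S_\alpha$ is bounded below, giving the uniform constant $C(\mathfrak K)$.

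Assertion 4 is the one I expect to be the main obstacle, since the translated ray $\{z+e^{i\psi}x:x\ge0\}$ is not a ray through the origin and hence is not a horizontal line in the strip picture. The first, essential observation is geometric: because $\alpha<\pi/4$, the set $\Bbb S_\alpha$ is a convex cone, and since $z\in\Bbb S_\alpha$ and $e^{i\psi}x\in\overline{\Bbb S_\alpha}$ for $\psi\in(-\alpha,\alpha)$, the whole ray stays inside $\Bbb S_\alpha$, so the integrand is defined. To get the bound I would argue by subharmonic majorization: the function $I(z)=\int_0^\infty\|F(z+e^{i\psi}x);L^2(\Omega)\|^2\,dx$ is subharmonic on $\Bbb S_\alpha$ (an integral of the subharmonic functions $z\mapsto\|F(z+e^{i\psi}x)\|^2$) and decays at infinity within the sector; its boundary values on the edges $\arg z=\pm\alpha$ are controlled by $\|F_\pm;L^2(\Bbb R_+\times\Omega)\|^2$. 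A Phragmén--Lindel\"of/maximum-principle argument then bounds $I$ by its harmonic majorant, hence by $\|F_-\|^2+\|F_+\|^2=\|F;\mathfrak H(\Bbb S_\alpha)\|^2$. The limiting case $z\to0$ can be checked independently via the logarithmic convexity (Hadamard three-lines, applied to $\|G\|^2$ on the strip) of $\psi\mapsto\int_0^\infty\|F(e^{i\psi}x)\|^2\,dx$, which shows this interior ray integral is bounded by the maximum of its two edge values and thus by $\|F;\mathfrak H(\Bbb S_\alpha)\|^2$; the careful verification of the boundary behaviour of $I$ needed to apply the maximum principle is the delicate point.
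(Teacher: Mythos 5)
Your treatment of assertions 1--3 is correct and is essentially the paper's own argument: the paper likewise disposes of these by the conformal map of $\Bbb S_\alpha$ onto a strip and an appeal to standard (vector-valued) Hardy-space theory, omitting details. The issue is assertion 4, which you correctly identify as the main obstacle but for which your proposed argument has a genuine gap.

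The gap is in the step ``its boundary values on the edges $\arg z=\pm\alpha$ are controlled by $\|F_\pm;L^2(\Bbb R_+\times\Omega)\|^2$.'' For $z_0=e^{i\alpha}t$ on an edge, $I(z_0)=\int_0^\infty\|F(z_0+e^{i\psi}x);L^2(\Omega)\|^2\,dx$ is an integral of $F$ along a ray that leaves the edge and runs \emph{into} the sector; it is not an integral of the boundary trace $F_+$ along the edge, and bounding it by $\|F_+\|^2+\|F_-\|^2$ is exactly the limiting case of the statement you are trying to prove. So the maximum-principle scheme is circular at the point where it needs input. (In addition, the Phragm\'en--Lindel\"of step requires control of $I$ at infinity in the sector, which you do not supply; and the three-lines remark only covers rays through the origin, i.e.\ $z=0$.) The missing ingredient --- and the route the paper takes --- is to split $F=f_++f_-$ via the Cauchy integral over the two edges, with $f_\pm$ lying in the Hardy spaces of the two half-planes $\Bbb C^\mp_\alpha$ whose intersection is $\Bbb S_\alpha$ and with $\|f_\pm\|=\|F_\pm;L^2(\Bbb R_+\times\Omega)\|$. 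For a half-plane Hardy function the supremum of ray integrals over \emph{all} rays emanating from boundary points into the half-plane equals the Hardy norm (van Winter's characterization), and any ray $\{z+e^{i\psi}x:x>0\}$ with $z\in\Bbb S_\alpha$, $\psi\in(-\alpha,\alpha)$ is a sub-ray of such a ray in each of the two half-planes. Summing the two contributions gives assertion 4. Without this decomposition (or an equivalent device that converts the translated-ray integral into boundary data), your subharmonic-majorization argument cannot be closed.
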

\begin{proof}
Assertions 1--3 are direct consequences of well-known facts from the
theory of Hardy spaces of functions analytic in strips,
e.g.~\cite{Ros Rov}. In fact, the proof reduces to the conformal
mapping of the sector~$\Bbb S_\alpha$ to the strip $\{z\in\Bbb
C:-\alpha<\Im z< \alpha\}$, we omit the details.

Let us prove  assertion~4. A standard argument, see
e.g.~\cite{Titchmarsh}, shows that any $ F\in\mathfrak H(\Bbb
S_\alpha)$ can be represented  by the Cauchy integral
\begin{equation}\label{cauchy}
F(z)=\frac 1 {2\pi i}\int\limits_{-\infty}^{+\infty}
\frac{e^{i\alpha} F_+(x)}{z-e^{i\alpha}x}\,dx-\frac 1 {2\pi
i}\int\limits_{-\infty}^{+\infty} \frac{e^{-i\alpha} F_-(x) }
   {z-e^{-i\alpha}x}\,dx,\quad z\in\Bbb S_\alpha,
\end{equation}
where we assume that the boundary limits $F_\pm(x)$ are extended  to
$x<0$ by zero. The integrals are absolutely convergent in the space
$L^2(\Omega)$. As is well-known~\cite{Ros Rov}, the first integral
in~\eqref{cauchy} defines an element $f_+$ of the Hardy space
$\mathfrak H (\Bbb C^-_\alpha)$ of $L^2(\Omega)$-valued functions
analytic in the half-plane $\Bbb C^-_\alpha=\{z\in\Bbb C: \Im(
e^{-i\alpha}z) <0\}$. As shown in~\cite{Van Winter I,Van Winter},
the norm in $\mathfrak H(\Bbb C^-_\alpha)$ can be defined by the
equality
$$
\|f_+;\mathfrak H(\Bbb
C^-_\alpha)\|=\left(\sup_{\psi\in(\alpha-\pi,\alpha)}\int_0^\infty
\|f_+(z_++e^{i\psi}x); L^2(\Omega)\|^2\,dx\right)^{1/2},\quad
e^{-i\alpha}z_+\in\Bbb R.
$$
Then $\|f_+;\mathfrak H(\Bbb C^-_\alpha)\|= \|F_+; L^2(\Bbb
R_+\times{\Omega})\|$. Similarly, the second integral
in~\eqref{cauchy} defines a function $f_-$ from the Hardy space
$\mathfrak H(\Bbb C^+_{\alpha})$ in $\Bbb C^+_\alpha=\{z\in\Bbb C:
\Im (e^{i\alpha}z)>0\}$, and $\|f_-;\mathfrak H(\Bbb C^+_\alpha)\|=
\|F_-; L^2(\Bbb R_+\times{\Omega})\|$, where
$$\|f_-;\mathfrak H(\Bbb
C^+_\alpha)\|=\left(\sup_{\psi\in(-\alpha,\pi-\alpha)}\int_0^\infty
\|f_-(z_-+e^{i\psi}x); L^2(\Omega)\|^2\,dx\right)^{1/2},\quad
e^{i\alpha}z_-\in\Bbb R.
$$
As a consequence,  for all $z\in{\Bbb S_\alpha}$ and
$\psi\in(-\alpha, \alpha)$ we have
$$
\begin{aligned}
\int_0^\infty \|F(z+ e^{i\psi} x);L^2(\Omega)\|^2\,dx\leq
\|f_+;\mathfrak H(\Bbb C^-_\alpha)\|^2+\|f_-;\mathfrak H(\Bbb
C^+_\alpha)\|^2\\=\|F_+; L^2(\Bbb
R_+\times{\Omega})\|^2+\|F_-;L^2(\Bbb R_+\times{\Omega})\|^2= \| F;
\mathfrak H(\Bbb S_\alpha)\|^2.
\end{aligned}
$$
\end{proof}

 Consider the algebra  $\mathscr E$ of all entire functions $\mathbb C\ni z\mapsto F(z)\in C_0^\infty({\Omega})$ with the following property:
in any sector $|\Im z|\leq (1-\epsilon) \Re z$ with $\epsilon>0$ the
value $\|F(z);L^2(\Omega)\|$ decays faster than any inverse power of
$\Re z$  as $\Re z\to+\infty$. Examples of functions $F\in \mathscr
E$ are $F(z)=e^{-\gamma z^2}P(z)$, where $\gamma>0$ and $P(z)$ is an
arbitrary polynomial  in $z$ with coefficients in
$C^\infty_0({\Omega})$. Clearly, $\mathscr E\subset\mathfrak H(\Bbb
S_\alpha)$. The next lemma is an adaptation of~\cite[Theorem
3]{Hunziker}, we omit the proof.

\begin{lemma}\label{p1} The set of functions
$\{\Bbb R_+\times{\Omega}\ni(x,y)\mapsto
F\circ\kappa_{\lambda,r}(x,y): F\in\mathscr E\}$
 is dense in the space $L^2(\Bbb R_+\times{\Omega})$ for any
$\lambda\in\mathcal D_\alpha$ and $r>0$. Here
$F\circ\kappa_{\lambda,r}(x,\cdot)$ stands for the value of the
entire function $z\mapsto F(z)$ at the point $z=x+\lambda s(x-r)$.
\end{lemma}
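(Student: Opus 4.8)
The plan is to argue by duality and separation of variables, reducing the claim to a one-dimensional uniqueness statement for a Laplace-type transform taken along the curve $\mathfrak L_{\lambda,r}$.

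First I would fix $\lambda\in\mathcal D_\alpha$ and $r>0$ and suppose that some $g\in L^2(\mathbb R_+\times\Omega)$ is orthogonal to every $F\circ\kappa_{\lambda,r}$ with $F\in\mathscr E$; the goal is then to show $g=0$. Since $\mathscr E$ contains all products $F(z)=\phi\,z^k e^{-\gamma z^2}$ with $\phi\in C_0^\infty(\Omega)$, $k\ge0$ and $\gamma>0$ (these are exactly the examples listed before the statement), Fubini's theorem converts the orthogonality relations into
\begin{equation*}
\int_0^\infty \xi(x)^k\, e^{-\gamma \xi(x)^2}\, G_\phi(x)\,dx=0,\qquad k=0,1,2,\dots,
\end{equation*}
where $\xi(x)=x+\lambda s(x-r)$ parametrizes $\mathfrak L_{\lambda,r}$ and $G_\phi(x)=\int_\Omega\phi(y)\overline{g(x,y)}\,dy\in L^2(\mathbb R_+)$ by Cauchy--Schwarz. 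Because $|\lambda|<\sin\alpha$ forces $|\arg(1+\lambda)|<\alpha<\pi/4$, we have $\Re(1+\lambda)^2>0$, so $e^{-\gamma\xi(x)^2}$ decays like a Gaussian and $m:=e^{-\gamma\xi^2}G_\phi$ lies in $L^1\cap L^2(\mathbb R_+)$ with all moments finite. Summing the displayed identities against $w^k/k!$ then shows that the entire function
\begin{equation*}
\Theta(w)=\int_0^\infty e^{w\xi(x)}\,m(x)\,dx
\end{equation*}
vanishes identically on $\mathbb C$.

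The heart of the matter is to deduce $m=0$, i.e.\ injectivity of this Laplace transform taken along the contour $\mathfrak L_{\lambda,r}$; once $m=0$ we obtain $G_\phi=0$ for every $\phi$, and letting $\phi$ run through a countable dense subset of $C_0^\infty(\Omega)$ gives $g(x,\cdot)=0$ for a.e.\ $x$, hence $g=0$ and the asserted density. To invert $\Theta$ I would exploit the structure of $\xi$: by~\eqref{ab1} it is the identity on $[0,r]$, and by~\eqref{ab3} it is affine for $x\ge R_0:=r+C$, namely $\xi(x)=\xi(R_0)+(1+\lambda)(x-R_0)$, so that $\mathfrak L_{\lambda,r}$ is a compactly supported perturbation of the ray of direction $1+\lambda$; moreover $\Re\xi$ is strictly increasing because $\Re\xi'=1+(\Re\lambda)\,s'\ge1-|\lambda|>0$. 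After the substitution $\sigma=\xi(x)$, the identity $\Theta\equiv0$ becomes the vanishing of a half-line contour Laplace transform of $m$ up to a compactly supported correction; the half-line part is governed by the Hardy-class results of Section~\ref{s3} (Lemma~\ref{l3}) together with the classical Paley--Wiener theorem, and this uniqueness is what forces $m\equiv0$.

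The decisive difficulty is precisely this inversion. Since $m$ is a generic $L^2$ density and is \emph{not} analytic, one cannot deform the contour $\mathfrak L_{\lambda,r}$ back onto the positive real axis inside $\Theta$, and the curved portion of the contour genuinely modulates the transform; this is exactly what separates the complex-$\lambda$ case from the real-$\lambda$ case, where density follows at once because $\kappa_{\lambda,r}$ is an honest self-diffeomorphism of the semi-cylinder and the composition operator is a homeomorphism of $L^2$. Overcoming the curved-contour obstruction is where the argument of~\cite[Theorem 3]{Hunziker} has to be adapted, the eventual linearity of $\xi$ and the monotonicity of $\Re\xi$ being the features that make the reduction to the classical half-line uniqueness theorem possible.
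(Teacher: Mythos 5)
The paper gives no proof of this lemma (it is stated as an adaptation of Theorem~3 of Hunziker's paper, with the proof omitted), so the only question is whether your argument is complete on its own terms. Your opening is sound and is the natural route: dualize, test against $F(z)=\phi\,z^k e^{-\gamma z^2}$, use $\Re\bigl((1+\lambda)^2\bigr)>0$ (from $|\arg(1+\lambda)|<\alpha<\pi/4$) to get Gaussian decay of $m=e^{-\gamma\xi^2}G_\phi$, and resum the moment identities into the vanishing of the entire function $\Theta(w)=\int_0^\infty e^{w\xi(x)}m(x)\,dx$. All of that is correct, and the interchange of sum and integral is justified by the Gaussian decay.

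The problem is that you then stop exactly where the lemma actually lives. The implication $\Theta\equiv 0\Rightarrow m\equiv 0$ \emph{is} the content of the lemma once the routine dualization is done, and you assert it rather than prove it: ``the half-line part is governed by the Hardy-class results of Section~\ref{s3} together with the classical Paley--Wiener theorem, and this uniqueness is what forces $m\equiv0$'' is not an argument. Lemma~\ref{l3} concerns Hardy classes of functions \emph{analytic} in $\mathbb S_\alpha$, whereas $m$ is a generic non-analytic $L^2$ density, so those results say nothing about $m$. Paley--Wiener, applied to the affine tail $\xi(x)=\xi(R_0)+(1+\lambda)(x-R_0)$, does give something real --- writing $\Theta(w)=A(w)+e^{w\xi(R_0)}L(-(1+\lambda)w)$ with $A$ of exponential type at most $|\xi(R_0)|$ and $L$ the ordinary Laplace transform of $m(R_0+\cdot)$, the identity $\Theta\equiv0$ forces $L$ to be entire of finite exponential type and square-integrable on the imaginary axis, hence $m$ is \emph{compactly supported} --- but that is only half the battle: one must still show that a compactly supported $L^2$ density on the arc $\xi([0,K])$ with all moments $\int\xi(x)^k m(x)\,dx$ vanishing is zero, which requires density of polynomials in $C(\Gamma)$ for an arc $\Gamma$ (Lavrentiev/Walsh/Mergelyan), or else the Gaussian-localization argument of Hunziker (studying $\int e^{-\gamma(\xi(x)-u)^2}m(x)\,dx=0$ as $u\to\infty$). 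You name the obstruction correctly --- the contour cannot be deformed because $m$ is not analytic --- but naming the difficulty and pointing at the reference where it is overcome is not the same as overcoming it. As written, the proposal reduces the lemma to an unproved, and genuinely nontrivial, injectivity statement.
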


Now we are in position to prove the following proposition.
 \begin{proposition}\label{HS}\begin{enumerate}
\item The estimate
\begin{equation}\label{US}
\int_0^\infty\|F\circ\kappa_{\lambda,r}; L^2(\Omega)\|^2\,dx\leq
C_r\|F;\mathfrak H(\Bbb S_\alpha)\|^2,\quad r>0,
\end{equation}
 holds uniformly in $\lambda\in\mathcal D_\alpha$ and $
F\in\mathfrak H(\Bbb S_\alpha)$.

\item The space $\mathfrak H(\Bbb S_\alpha)$ is sufficiently large in the sense that for any
$\lambda\in\mathcal D_\alpha$ and $r>0$ the set $\{\Bbb
R_+\times{\Omega}\ni(x,y)\mapsto F\circ
\kappa_{\lambda,r}(x,y):F\in\mathfrak H(\Bbb S_\alpha)\}$ is dense
in $L^2(\Bbb R_+\times{\Omega})$.
\item For any $F\in \mathfrak H(\Bbb S_\alpha)$ and $r>0$ the function $\mathcal D_\alpha\in\lambda\mapsto F\circ\kappa_{\lambda,r}\in L^2(\Bbb R_+\times{\Omega})$ is
analytic.
\end{enumerate}
\end{proposition}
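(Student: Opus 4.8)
The plan is to treat the three assertions in order, with the estimate in assertion~1 carrying the main analytic content and the remaining two following from it together with the lemmas already established.

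For assertion~1 I would split $[0,\infty)$ into the three pieces $[0,r]$, $[r,r+C]$, and $[r+C,\infty)$ dictated by the scaling function $s$, and estimate each by Lemma~\ref{l3}. On $[0,r]$ we have $s(x-r)=0$, so $F\circ\kappa_{\lambda,r}(x,y)=F(x,y)$; applying assertion~4 of Lemma~\ref{l3} with $z=\epsilon>0$, $\psi=0$ and letting $\epsilon\downarrow0$ gives $\int_0^r\|F(x);L^2(\Omega)\|^2\,dx\le\|F;\mathfrak H(\Bbb S_\alpha)\|^2$. On the bounded piece $[r,r+C]$ the point $z(x)=x+\lambda s(x-r)$ stays in a compact set $\mathfrak K\subset\Bbb S_\alpha$ that can be chosen independent of $\lambda\in\mathcal D_\alpha$ (because $s(x-r)\le x-r$ keeps the ratio $s(x-r)/x$ bounded away from $1$, so $1+\lambda s(x-r)/x$ stays in a disk about $1$ of radius strictly less than $\sin\alpha$, whence $z(x)=x\bigl(1+\lambda s(x-r)/x\bigr)$ has argument bounded away from $\pm\alpha$ and bounded modulus), and assertion~3 of Lemma~\ref{l3} bounds this contribution by $C\,C(\mathfrak K)^2\|F;\mathfrak H(\Bbb S_\alpha)\|^2$. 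Finally, on $[r+C,\infty)$ we have $s'(x-r)=1$ by~\eqref{ab3}, so $z(x)=z(r+C)+(1+\lambda)(x-r-C)$ is a straight ray issuing from $z(r+C)\in\Bbb S_\alpha$ in the direction $e^{i\psi}$ with $\psi=\arg(1+\lambda)\in(-\alpha,\alpha)$; the substitution $u=|1+\lambda|(x-r-C)$ together with assertion~4 of Lemma~\ref{l3} bounds this piece by $|1+\lambda|^{-1}\|F;\mathfrak H(\Bbb S_\alpha)\|^2\le(1-\sin\alpha)^{-1}\|F;\mathfrak H(\Bbb S_\alpha)\|^2$. Summing yields~\eqref{US} with a constant $C_r$ depending on $r$ only through $\mathfrak K$ and uniform in $\lambda\in\mathcal D_\alpha$ and $F\in\mathfrak H(\Bbb S_\alpha)$.

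Assertion~2 is then immediate: since $\mathscr E\subset\mathfrak H(\Bbb S_\alpha)$, the set $\{F\circ\kappa_{\lambda,r}:F\in\mathfrak H(\Bbb S_\alpha)\}$ contains $\{F\circ\kappa_{\lambda,r}:F\in\mathscr E\}$, which is dense in $L^2(\Bbb R_+\times\Omega)$ by Lemma~\ref{p1}.

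For assertion~3 I would establish weak analyticity on a dense set and upgrade it to strong analyticity using the uniform bound from assertion~1. For fixed $x$ the map $\lambda\mapsto F(x+\lambda s(x-r))\in L^2(\Omega)$ is analytic, since $\lambda\mapsto x+\lambda s(x-r)$ is entire with values in $\Bbb S_\alpha$ and $F$ is analytic there. Taking any $g\in C_0^\infty(\Bbb R_+\times\Omega)$, the pairing $(F\circ\kappa_{\lambda,r},g)$ is an integral over a compact $x$-interval on which $z(x)$ ranges in a compact subset of $\Bbb S_\alpha$ where $F$ is bounded by Lemma~\ref{l3}.3; Morera's theorem combined with Fubini (the integrand being analytic in $\lambda$ and dominated on the compact region) then shows $\lambda\mapsto(F\circ\kappa_{\lambda,r},g)$ is analytic. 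For general $g\in L^2(\Bbb R_+\times\Omega)$ one chooses $g_n\in C_0^\infty(\Bbb R_+\times\Omega)$ with $g_n\to g$; by~\eqref{US} the convergence $(F\circ\kappa_{\lambda,r},g_n)\to(F\circ\kappa_{\lambda,r},g)$ is uniform in $\lambda$, so the limit is analytic as well. Thus $\lambda\mapsto F\circ\kappa_{\lambda,r}$ is weakly analytic, and a locally bounded weakly analytic Banach-space-valued function is strongly analytic. The main obstacle is concentrated in assertion~1, specifically the uniform-in-$\lambda$ choice of the compact set $\mathfrak K$ on $[r,r+C]$ and the reduction of the tail to a straight ray, after which assertions~2 and~3 fall out.
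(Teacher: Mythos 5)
Your proof is correct and follows essentially the same route as the paper: the paper likewise observes that the scaled curve $\{x+\lambda s(x-r):x>0\}$ coincides with rays outside a $\lambda$-independent compact subset of $\Bbb S_\alpha$ and invokes assertions 3 and 4 of Lemma~\ref{l3} for part 1, deduces part 2 from the inclusion $\mathscr E\subset\mathfrak H(\Bbb S_\alpha)$ and Lemma~\ref{p1}, and proves part 3 by passing from weak to strong analyticity. Your write-up simply makes explicit the three-piece decomposition, the uniform choice of the compact set, and the change of variables on the ray that the paper leaves to the reader.
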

The proof is preceded by a discussion. By Proposition~\ref{HS}.1 for
any $r>0$ the complex scaling $\kappa_{\lambda,r}$ induces the
uniformly bounded injection
\begin{equation}\label{inj}
\mathfrak H(\Bbb S_\alpha)\ni F\mapsto F\circ\kappa_{\lambda,r} \in
L^2(\Bbb R_+\times{\Omega}),\quad \lambda\in\mathcal D_\alpha.
\end{equation}
Any function  $F\in\mathfrak H(\Bbb S_\alpha)$ can be reconstructed
from its trace $x\mapsto F\circ\kappa_{\lambda,r}(x,\cdot)\in
L^2(\Omega)$ by analytic continuation from the curve $\{x+\lambda
s(x-r)\in\Bbb C: x>0\}$ to the sector $\Bbb S_\alpha$. Therefore we
can always identify the space $\mathfrak H(\Bbb S_\alpha)$ with the
 range  of injection~\eqref{inj}. By Proposition~\ref{HS}.2 the range is dense
 in $L^2(\Bbb R_+\times{\Omega})$.

\begin{proof} 1. For all $\lambda\in\mathcal D_\alpha$ the curve $\{x+\lambda s(x-r)\in\Bbb C: x>0 \}$ lies in the sector
$\Bbb S_\alpha$.  Observe that this curve is differ from a ray only
inside an independent of $\lambda\in\mathcal D_\alpha$ compact
subset $\mathfrak K_r\subset\Bbb S_\alpha$. Now the uniform in
$\lambda\in\mathcal D_\alpha$ and $F\in\mathfrak H(\Bbb S_\alpha)$
estimate~\eqref{US} follows from assertions 3 and 4 of
Lemma~\ref{l3}.

2.  The assertion is an immediate consequence of the embedding
$\mathscr E\subset\mathfrak H(\Bbb S_\alpha)$, Lemma~\ref{p1}, and
the estimate~\eqref{US}.

3. It is easy to  see  that the function  $\mathcal
D_\alpha\ni\lambda\mapsto F\circ\kappa_{\lambda,r}\in L^2(\Bbb
R_+\times{\Omega})$ is weakly (and therefore strongly) analytic.
\end{proof}

Introduce the Hilbert space $\mathcal H_\alpha(\mathcal G)$ with the
norm
$$
\|f; \mathcal H_\alpha(\mathcal G) \|=\|f; L^2(\mathcal
G)\|+\|f\circ\varkappa^{-1}; \mathfrak H(\Bbb S_\alpha) \|.
$$
The space $\mathcal H_\alpha(\mathcal G)$ consists of all functions
$f\in L^2(\mathcal G)$ such that $f\circ\varkappa^{-1}$ is an
element of the Hardy space $\mathfrak H(\Bbb S_\alpha)$; here
$\varkappa$ is the diffeomorphism~\eqref{diff}. As a consequence of
Proposition~\ref{HS} and  definition~\eqref{sc} of the complex
scaling $\vartheta_{\lambda,r}$  we immediately get the following
assertions.
\begin{corollary}\label{dense}
\begin{enumerate}
\item For any $r>0$ the estimate~\eqref{stars+}
holds with an independent of $f\in\mathcal H_\alpha(\mathcal G)$ and
$\lambda\in\mathcal D_\alpha$ constant $C_r$.
\item For any
$\lambda\in\mathcal D_\alpha$ and $r>0$ the set $\{ f\circ
\vartheta_{\lambda,r}\in L^2(\mathcal G):f\in\mathscr
H_\alpha(\mathcal G)\}$ is dense in the space $L^2(\mathcal G)$.
\item For any $f\in \mathcal H_\alpha(\mathcal G)$ and $r>0$ the function~\eqref{stars} is
analytic.
\end{enumerate}
\end{corollary}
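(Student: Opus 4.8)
The plan is to prove Corollary~\ref{dense} by transporting the three assertions of Proposition~\ref{HS} from the model semi-cylinder $\Bbb R_+\times\overline{\Omega}$ to the quasi-cylindrical domain $\mathcal G$ via the diffeomorphism $\varkappa$ and the definition~\eqref{sc} of the scaling $\vartheta_{\lambda,r}$. The essential point is that, by construction~\eqref{sc}, $\vartheta_{\lambda,r}$ acts as the identity on $\overline{\mathcal G}\setminus\overline{\mathcal C}$, while on the quasi-cylinder $\overline{\mathcal C}$ it equals $\varkappa\circ\kappa_{\lambda,r}\circ\varkappa^{-1}$. Hence any function $f\in L^2(\mathcal G)$ may be split as $f=f_0+f_1$, where $f_0$ is supported in the bounded part $\mathcal G\setminus\mathcal C$ (on which $\vartheta_{\lambda,r}$ does nothing) and $f_1$ is supported in $\mathcal C$. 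For the quasi-cylindrical part I would change variables by $\varkappa$: writing $F=f_1\circ\varkappa\in\mathfrak H(\Bbb S_\alpha)$, one has $f_1\circ\vartheta_{\lambda,r}=(F\circ\kappa_{\lambda,r})\circ\varkappa^{-1}$ on $\mathcal C$. The diffeomorphism $\varkappa$ has a Jacobian that, by assumptions~\textit{i,ii} and~\eqref{stab}, is uniformly close to the identity; consequently the change of variables induces norm equivalences with $\lambda$-independent constants between $L^2$ on $\mathcal C$ and $L^2(\Bbb R_+\times\Omega)$.

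For assertion~1, the estimate~\eqref{stars+}, I would start from $\|f\circ\vartheta_{\lambda,r};L^2(\mathcal G)\|^2=\|f_0;L^2(\mathcal G\setminus\mathcal C)\|^2+\|f_1\circ\vartheta_{\lambda,r};L^2(\mathcal C)\|^2$. The first summand is simply $\|f_0;L^2\|^2$ since $\vartheta_{\lambda,r}$ is the identity there. For the second, after the change of variables $\varkappa$ the quasi-cylindrical integral is bounded (up to a $\lambda$-independent Jacobian factor coming from~\eqref{stab}) by $\int_0^\infty\|F\circ\kappa_{\lambda,r};L^2(\Omega)\|^2\,dx$, which Proposition~\ref{HS}.1 controls by $C_r\|F;\mathfrak H(\Bbb S_\alpha)\|^2=C_r\|f_1\circ\varkappa^{-1};\mathfrak H(\Bbb S_\alpha)\|^2$. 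Collecting the two pieces and recalling the definition $\|f;\mathcal H_\alpha(\mathcal G)\|=\|f;L^2(\mathcal G)\|+\|f\circ\varkappa^{-1};\mathfrak H(\Bbb S_\alpha)\|$ yields~\eqref{stars+} with a constant independent of $f$ and $\lambda\in\mathcal D_\alpha$.

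For assertion~2, density, I would approximate an arbitrary $g\in L^2(\mathcal G)$. On the bounded part I can approximate $g$ in $L^2(\mathcal G\setminus\mathcal C)$ by elements of $\mathcal H_\alpha(\mathcal G)$ supported there, which are unaffected by $\vartheta_{\lambda,r}$, so density is immediate. On the quasi-cylindrical part I transport $g$ by $\varkappa$ to $G\in L^2(\Bbb R_+\times\Omega)$ and invoke Proposition~\ref{HS}.2: the set $\{F\circ\kappa_{\lambda,r}:F\in\mathfrak H(\Bbb S_\alpha)\}$ is dense in $L^2(\Bbb R_+\times\Omega)$, so I can find $F$ with $\|F\circ\kappa_{\lambda,r}-G;L^2(\Bbb R_+\times\Omega)\|$ small; pulling back by $\varkappa^{-1}$ and using the Jacobian bound gives a function $f=F\circ\varkappa^{-1}\in\mathcal H_\alpha(\mathcal G)$ with $f\circ\vartheta_{\lambda,r}$ close to $g$ on $\mathcal C$. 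Assertion~3, analyticity of~\eqref{stars}, follows the same way: on the bounded part the map is constant in $\lambda$, and on $\mathcal C$ it is the pullback by the fixed $\varkappa^{-1}$ of $F\circ\kappa_{\lambda,r}$, which is analytic in $\lambda$ by Proposition~\ref{HS}.3; composing with the bounded fixed operator $\varkappa^{-1}$ preserves analyticity.

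The only real obstacle is keeping every constant uniform in $\lambda\in\mathcal D_\alpha$ as one passes through $\varkappa$. This is handled entirely by~\eqref{stab}: the metric-matrix elements, hence the Jacobian of $\varkappa$ and its inverse together with $\det\varkappa'$, are bounded above and below uniformly on $\overline{\Omega}$ along the scaled curve $\mathfrak L_{\lambda,r}$, with bounds independent of $\lambda$. Since the scaling argument $x+\lambda s(x-r)$ stays in $\Bbb S_\alpha$ for all $\lambda\in\mathcal D_\alpha$, the change-of-variables constants inherit this uniformity, and the proof of all three assertions reduces to the already-established Proposition~\ref{HS} without further analytic work.
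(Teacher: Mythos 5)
Your proposal is correct and follows essentially the same route as the paper, which simply declares the corollary an immediate consequence of Proposition~\ref{HS} and the definition~\eqref{sc}; you have merely written out the implicit splitting of $f$ into its bounded-part and quasi-cylinder-part contributions and the change of variables by $\varkappa$, whose real Jacobian determinant is bounded above and below independently of $\lambda$ so that all constants transfer from the semi-cylinder to $\mathcal G$.
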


\section{Limiting absorption principle}\label{s6}
Introduce the Sobolev space $H_0^2(\mathcal G)$ of functions
satisfying the homogeneous Dirichlet boundary condition on
$\partial\mathcal G$ as the completion of the core
$C_0^\infty(\overline{\mathcal G})$ with respect to the graph norm
\begin{equation}\label{norm}
\|u;H^2_0(\mathcal G)\|=\|u;L^2(\mathcal G)\|+\|\Delta u;
L^2(\mathcal G)\|.
\end{equation}
(Integrating by parts and using the Cauchy-Schwarz inequality one
can easily see that the norm~\eqref{norm} is equivalent to the
traditional norm $(\sum_{\ell+|m|\leq
2}\|\partial_\zeta^\ell\partial_\eta^mu;L^2(\mathcal
G)\|^2)^{1/2}$.) By Proposition~\ref{sectorial}.1 the space
$H^2_0(\mathcal G)$ is the domain $\mathrm D(\Delta)$ of the
selfadjoint Dirichlet Laplacian $\Delta$.
 For the points $\mu\in \sigma(\Delta)$  the
resolvent $(\Delta-\mu-i\epsilon)^{-1}$ does not have limits in the
space of bounded operators  $\mathscr B( L^2(\mathcal G),
H^2_0(\mathcal G))$ as $\epsilon$ tends to zero from below
($\epsilon\uparrow 0$) or from above ($\epsilon\downarrow 0$).
However the limits may exist in the space of bounded operators
acting from a smaller source space  to a larger target space. As a
source space we take the Hilbert space $\mathcal H_\alpha(\mathcal
G)$ constructed in the previous section.
 As
a target space we take the reflexive Fr\'{e}chet space
$H^2_{0,\operatorname{loc}}(\mathcal G)$. The space
$H^2_{0,\operatorname{loc}}(\mathcal G)$
 consists of all distributions $u$ such that $\varrho u\in H^2_0(\mathcal G)$
 with any $\varrho\in C_c^\infty(\overline{\mathcal G})$,
 the topology in $H^2_{0,\operatorname{loc}}(\mathcal G)$ is induced
 by the family of seminorms $u\mapsto \|\varrho u;H^2_0(\mathcal
 G)\|$; here  $C_c^\infty(\overline{\mathcal G})$ is the set  of all smooth
 functions with compact supports in $\overline{\mathcal G}$.
\begin{theorem}\label{LA} Let $\sigma(\Delta_\Omega)$ stand for  the spectrum of the
selfadjoint Dirichlet Laplacian $\Delta_\Omega$ in $L^2(\Omega)$.
Assume that $\mu_0\in\Bbb R\setminus\sigma(\Delta_\Omega)$ is not an
eigenvalue of the selfadjoint Dirichlet Laplacian $\Delta$ in
$L^2(\mathcal G)$. Then the following assertions hold.
\begin{enumerate}
\item For all sufficiently large $r>0$ and $\lambda\in\mathcal D_\alpha\setminus\Bbb R$ the resolvent
$$
(\Delta_{\lambda,r}-\mu_0)^{-1}: L^2(\mathcal G)\to H^2_0(\mathcal
G)
$$
is a bounded operator.
\item The resolvent
$(\Delta-\mu_0-i\epsilon)^{-1}$, $\epsilon\gtrless 0$,
 viewed as a  bounded operator acting from $\mathcal H_\alpha(\mathcal G)$ to $H^2_{0,\operatorname{loc}}(\mathcal
G)$, has limits as $\epsilon\downarrow  0$ and $\epsilon \uparrow
0$.

\item Suppose that $r>0$ is sufficiently large, $\lambda\in \mathcal D_\alpha\setminus\Bbb R$, and $f\in\mathcal H_\alpha(\mathcal G)$.
Let $u_{\lambda,r}\in H^2_0(\mathcal G)$ be given by the equality
$u_{\lambda,r}=(\Delta_{\lambda,r}-\mu_0)^{-1}(f\circ\vartheta_{\lambda,r})$.
 Then the
outgoing  $u_-\in H^2_{0,\operatorname{loc}}(\mathcal G)$ and the
incoming $u_+\in H^2_{0,\operatorname{loc}}(\mathcal G)$ solutions
defined by the limiting absorption principle
\begin{equation}
u_+  =\lim_{\epsilon\uparrow 0}(\Delta-\mu_0-i\epsilon)^{-1} f,\quad
u_-=\lim_{\epsilon\downarrow 0}(\Delta-\mu_0-i\epsilon)^{-1} f,
\label{in out}
\end{equation}
meet the relation
 $$ u_{\lambda,r}\!\upharpoonright_{\mathcal G_r}=\left\{
                                                    \begin{array}{ll}
                                                      u_+\!\upharpoonright_{\mathcal G_r}, &  \Im\lambda<0; \\
                                                      u_-\!\upharpoonright_{\mathcal G_r}, &  \Im\lambda>0.
                                                    \end{array}
                                                  \right. $$
 Here the bounded domain $\mathcal
G_r$ is the same as in~\eqref{truncated}.
\end{enumerate}
\end{theorem}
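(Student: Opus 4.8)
The plan is to reduce the whole theorem to a single spectral fact about the scaled operators---that $\mu_0$ lies in the resolvent set of $\Delta_{\lambda,r}$ whenever $\lambda\in\mathcal D_\alpha\setminus\mathbb R$---and then to transport that fact back to $\Delta$ by analytic continuation in $\lambda$ and $\mu$. I would begin with the essential spectrum. By \eqref{lim} the coefficients of $\Delta_{\lambda,r}$ stabilize at infinity to the constant-coefficient model operator $-(1+\lambda)^{-2}\partial_x^2+\Delta_\Omega$ on the cylinder $\mathbb R\times\Omega$ with Dirichlet data. The theory of elliptic boundary value problems in domains with cylindrical ends (\cite{KozlovMaz`ya,KozlovMazyaRossmann,MP2}) then identifies $\sigma_{\mathrm{ess}}(\Delta_{\lambda,r})$ with the union of the rays $\nu+(1+\lambda)^{-2}[0,\infty)$ issuing from the thresholds $\nu\in\sigma(\Delta_\Omega)$ in the direction $\arg(1+\lambda)^{-2}=-2\arg(1+\lambda)$. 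For $\lambda\in\mathcal D_\alpha\setminus\mathbb R$ this direction is non-real, so the only real points of these rays are the thresholds themselves; since $\mu_0\in\mathbb R\setminus\sigma(\Delta_\Omega)$, we get $\mu_0\notin\sigma_{\mathrm{ess}}(\Delta_{\lambda,r})$. Moreover, for $\Im\lambda>0$ (resp.\ $\Im\lambda<0$) these rays lie in the closed lower (resp.\ upper) half-plane.

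It remains, for assertion~1, to exclude $\mu_0$ from the discrete spectrum, and this is the crux of the matter. If $\Delta_{\lambda_0,r}w=\mu_0w$ with $w\neq0$, then because $\mu_0$ is separated from $\sigma_{\mathrm{ess}}(\Delta_{\lambda_0,r})$ the eigenfunction $w$ decays exponentially along the cylindrical end---the standard decay of solutions of elliptic problems at energies off the continuous spectrum. This exponential decay is exactly what lets me deform the scaling parameter: the eigenvalue and a normalized eigenfunction continue analytically in $\lambda$, and as long as they stay off the rotating essential spectrum the eigenvalue is locally constant, hence equal to $\mu_0$ throughout. Deforming $\lambda$ to a real value, where $\Delta_{\lambda,r}$ is similar to the selfadjoint $\Delta$ via the genuine diffeomorphism $\vartheta_{\lambda,r}$, produces a nonzero $L^2$-eigenfunction of $\Delta$ at $\mu_0$, contradicting the hypothesis. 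The same reasoning shows a non-real eigenvalue off the essential spectrum cannot survive the deformation; hence for $\Im\lambda_0>0$ the open upper half-plane, together with a neighborhood of $\mu_0$, lies in $\rho(\Delta_{\lambda_0,r})$ (and symmetrically for $\Im\lambda_0<0$). Thus $\mu_0\in\rho(\Delta_{\lambda_0,r})$, and elliptic regularity together with the identification $\mathrm D(\Delta_{\lambda_0,r})=H^2_0(\mathcal G)$ from Proposition~\ref{sectorial} makes $(\Delta_{\lambda_0,r}-\mu_0)^{-1}\colon L^2(\mathcal G)\to H^2_0(\mathcal G)$ bounded, proving assertion~1. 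I expect this exponential-decay-plus-unscaling step to be the \emph{main obstacle}, since it is where the complex scaling genuinely couples the non-selfadjoint family back to the selfadjoint operator.

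For assertions~2 and~3 the key algebraic input is that for real $\lambda=t$ the diffeomorphism $\vartheta_{t,r}$ is an isometry of $(\mathcal G,\mathsf e_{t,r})$ onto $(\mathcal G,\mathsf e)$, so the Laplace--Beltrami operator \eqref{v2} intertwines with $\Delta$ as $\Delta_{t,r}(g\circ\vartheta_{t,r})=(\Delta g)\circ\vartheta_{t,r}$. Writing $g=(\Delta-\mu)^{-1}f$ for $f\in\mathcal H_\alpha(\mathcal G)$ and $\Im\mu\neq0$, this gives $(\Delta_{t,r}-\mu)^{-1}(f\circ\vartheta_{t,r})=g\circ\vartheta_{t,r}$, which on $\mathcal G_r$---where $\vartheta_{t,r}=\mathrm{id}$ by \eqref{sc} and \eqref{truncated}---reduces to $(\Delta-\mu)^{-1}f$, independent of $t$. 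Now set $w_\lambda(\mu)=(\Delta_{\lambda,r}-\mu)^{-1}(f\circ\vartheta_{\lambda,r})$; by Proposition~\ref{sectorial}.3 and Corollary~\ref{dense} this is analytic in $(\lambda,\mu)$ and $H^2_0(\mathcal G)$-valued, the role of $\mathcal H_\alpha(\mathcal G)$ being precisely to make $\lambda\mapsto f\circ\vartheta_{\lambda,r}$ a bounded analytic $L^2$-valued map. Choosing an open set $U$ in the upper half-plane on which the uniform sectoriality estimate \eqref{i1} forces $U\subset\rho(\Delta_{\lambda,r})$ for every $\lambda\in\mathcal D_\alpha$, continuation in $\lambda$ from the real axis (an analytic function on the disk $\mathcal D_\alpha$ that is constant on a real interval is constant) yields $w_\lambda(\mu)|_{\mathcal G_r}=(\Delta-\mu)^{-1}f|_{\mathcal G_r}$ for all $\mu\in U$ and all $\lambda\in\mathcal D_\alpha$. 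Fixing $\lambda_0$ with $\Im\lambda_0>0$, both sides are analytic in $\mu$ on $\{\Im\mu>0\}$ and agree on $U$, so by the identity theorem they agree on the whole upper half-plane.

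Finally I would let $\mu=\mu_0+i\epsilon$ with $\epsilon\downarrow0$. Since $\mu_0\in\rho(\Delta_{\lambda_0,r})$ by assertion~1, the left-hand side $w_{\lambda_0}(\mu)$ is continuous up to $\mu_0$ and converges in $H^2_0(\mathcal G)$ to $u_{\lambda_0,r}=(\Delta_{\lambda_0,r}-\mu_0)^{-1}(f\circ\vartheta_{\lambda_0,r})$; restricting to $\mathcal G_r$ shows that $(\Delta-\mu_0-i\epsilon)^{-1}f|_{\mathcal G_r}$ converges to $u_{\lambda_0,r}|_{\mathcal G_r}$, which is $u_-|_{\mathcal G_r}$. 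Letting $r\to\infty$ and testing against cutoffs $\varrho\in C_c^\infty(\overline{\mathcal G})$ whose supports eventually lie in $\mathcal G_r$ upgrades this to convergence in $H^2_{0,\operatorname{loc}}(\mathcal G)$, so the limit $u_-=\lim_{\epsilon\downarrow0}(\Delta-\mu_0-i\epsilon)^{-1}f$ exists and equals $u_{\lambda_0,r}$ on each $\mathcal G_r$. The case $\epsilon\uparrow0$, giving $u_+$ and corresponding to $\Im\lambda_0<0$, is entirely symmetric, which proves assertions~2 and~3.
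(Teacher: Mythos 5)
Your handling of assertions 2 and 3 follows essentially the same route as the paper: the intertwining identity for real $\lambda$, analytic continuation in $\lambda$ over the disk $\mathcal D_\alpha$ using the Hardy-space vectors $f\in\mathcal H_\alpha(\mathcal G)$ (Proposition~\ref{sectorial}.3 and Corollary~\ref{dense}), the identity theorem in $\mu$ on the upper half-plane, and the boundary limit $\mu\to\mu_0$, whose existence rests on assertion~1. Your description of $\sigma_{ess}(\Delta_{\lambda,r})$ as the rays $\nu+(1+\lambda)^{-2}[0,\infty)$ also agrees with Proposition~\ref{ess}. The difficulty is assertion~1, which you rightly single out as the crux but do not actually prove.

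Your argument there is the classical eigenvalue-stability heuristic: if $\Delta_{\lambda_0,r}w=\mu_0 w$ with $w\neq 0$, continue the eigenvalue analytically in $\lambda$, observe it is locally constant, deform $\lambda$ to a real value and obtain an $L^2$-eigenfunction of $\Delta$ at $\mu_0$. Two steps here are unjustified, and the second fails as stated. First, local constancy of the eigenvalue in $\lambda$ does not follow from analyticity of the family alone: for non-real $\lambda_1\neq\lambda_2$ the operators $\Delta_{\lambda_1,r}$ and $\Delta_{\lambda_2,r}$ are not similar, and the standard proof of constancy goes precisely through the $\lambda$-independence of the matrix elements $\bigl((\Delta_{\lambda,r}-\mu)^{-1}(f\circ\vartheta_{\lambda,r}),g\circ\vartheta_{\overline{\lambda},r}\bigr)_{\lambda,r}$ for $f,g$ in the dense set $\mathcal H_\alpha(\mathcal G)$ --- i.e., the identity~\eqref{h6} that the paper establishes and that you invoke only later. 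Second, and more seriously, you cannot simply ``deform $\lambda$ to a real value'': as $\Im\lambda\downarrow 0$ the ray of $\sigma_{ess}(\Delta_{\lambda,r})$ emanating from a threshold $\nu<\mu_0$ rotates up onto the real axis and covers $\mu_0$ exactly in the limit, so the eigenvalue ceases to be isolated, the Riesz projection may degenerate, and the exponential decay rate of the eigenfunction --- governed by $|\Im\{(1+\lambda)\sqrt{\mu_0-\nu}\}|$, cf.~\eqref{interval}, which tends to $0$ as $\Im\lambda\to 0$ --- is lost. The limit of the eigenfunctions therefore need not be in $L^2(\mathcal G)$; if it always were, every real $\mu_0$ in the continuous spectrum would be an eigenvalue of $\Delta$. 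The paper avoids the eigenfunction altogether: assuming a pole of $(\Delta_{\lambda,r}-\mu)^{-1}$ at $\mu_0$, it uses the density of $\{f\circ\vartheta_{\lambda,r}:f\in\mathcal H_\alpha(\mathcal G)\}$ in $L^2(\mathcal G)$ to produce $f,g$ for which the scalar function in~\eqref{h6} has a pole at $\mu_0$, and then Stone's formula $i\epsilon(\Delta-\mu_0-i\epsilon)^{-1}\to\mathsf P$ to conclude $(\mathsf Pf,g)\neq 0$, contradicting the hypothesis that $\mu_0$ is not an eigenvalue of $\Delta$. You would need either to reproduce that argument or to supply genuine proofs of the two deformation steps; as written, assertion~1 --- on which your proofs of assertions~2 and~3 also depend --- is not established.
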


The proof is preceded by a discussion. From Theorem~\ref{LA}.3 we
see that the equation $(\Delta_{\lambda,r}-\mu_0)u_{\lambda,
r}=f\circ\vartheta_{\lambda,r}$ with non-real parameter
$\lambda\in\mathcal D_\alpha$ describes infinite PMLs on $\mathcal
G\setminus\mathcal G_r$ for all $\mu_0$ satisfying the assumptions
of the theorem. The layers are perfectly matched in the sense that
for $\Im\lambda>0$ (resp. for $\Im\lambda<0$) $u_{\lambda,r}$
coincides in $\mathcal G_r$ with the outgoing solution $u_-$ (resp.
with  the incoming solution $u_+$). The PMLs are absorbing because
in contrast to $u_\pm$  the function $u_{\lambda, r}$ decays at
infinity in the mean as an element of $H^2_0(\mathcal G)$. In the
next section we will refine results of Theorem~\ref{LA} by showing
that under an additional assumption on $f\circ\vartheta_{\lambda,r}$
the solution $u_{\lambda,r}$ is of some exponential decay at
infinity. For instance, this assumption is a priori met for $f\in
L^2(\mathcal G)$  supported in $\mathcal G_r$. Then
$f\circ\vartheta_{\lambda,r}\equiv f$ and the operator
$\Delta_{\lambda,r}$ completely describes  infinite PMLs on
$\mathcal G\setminus\mathcal G_r$.
\begin{proof} The proof consists of two steps.

{\it Step 1.}  In Section~\ref{ess spectrum} below we will show that
the graph norm of $\Delta_{\lambda,r}$ is an equivalent norm in
$H^2_0(\mathcal G)$. This immediately implies that
 $\mathrm
D(\Delta_{\lambda,r})=H^2_0(\mathcal G)$ as the set
$C_0^\infty(\overline{\mathcal G})$ is dense in both spaces, cf.
Proposition~\ref{sectorial}.1. We will also localize the essential
spectrum $\sigma_{ess}(\Delta_{\lambda,r})$ of the unbounded
m-sectorial operator $\Delta_{\lambda,r}$ in $L^2(\mathcal G)$. As
is well-known, the spectrum $\sigma(\Delta_\Omega)$ consists of
infinitely many positive isolated eigenvalues. It turns out that
$\sigma_{ess}(\Delta_{\lambda,r})$ consists of an infinite number of
rays emanating from every point $\nu\in\sigma(\Delta_\Omega)$, cf.
Figure~\ref{fig5}.
\begin{figure}[h]
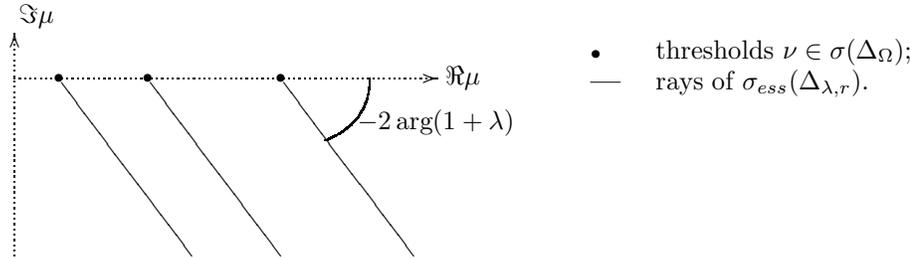

\[
\xy (0,0)*{\xy0;/r.28pc/:{\ar@{.>}(0,10);(50,10)*{\ \Re \mu}};
{\ar@{.>}(0,-10);(0,15)}; (2,17)*{\ \Im \mu};
{\ar@{-}(5,10)*{\scriptstyle\bullet};(20,-10)};{\ar@{-}(15,10)*{\scriptstyle\bullet};(30,-10)};
{(30,10)*{\scriptstyle\bullet};(45,-10)*{} **\dir{-}};
(40,10);(35,3)**\crv{(40,5)*{\quad\quad\quad\quad\quad
-2\arg(1+\lambda)}};
\endxy};
(65,0)*{\xy (20,20)*{
\begin{array}{ll}
     {\scriptstyle\bullet } & \text{ thresholds $\nu\in\sigma(\Delta_\Omega)$;} \\
    \text{\bf---} & \text{ rays of $\sigma_{ess}(\Delta_{\lambda,r})$.}
\end{array}};
\endxy};
\endxy
\]
\caption{Essential spectrum of the m-sectorial operator
$\Delta_{\lambda,r}$ for $\Im\lambda>0$.}\label{fig5}
\end{figure}
By definition $\sigma(\Delta_\Omega)$ is the set of thresholds of
the Dirichlet Laplacian $\Delta$. As $\lambda$ varies, the ray
$\{\mu\in \Bbb C:\arg(\mu-\nu)=-2\arg (1+\lambda)\}$ of the
essential spectrum $\sigma_{ess}(\Delta_{\lambda,r})$ rotates about
the threshold $\nu\in\sigma(\Delta_\Omega)$ and sweeps the sector
$\{\mu\in\Bbb C:|\arg (\mu-\nu)|<2\alpha\}$. In order to avoid
several repetitions we organized the paper so that the proofs of
these results in some greater generality are postponed to
Section~\ref{ess spectrum}; here we take these results for granted.

Recall that $\mu$ is said to be a point of the essential spectrum
$\sigma_{ess}(A)$ of a closed unbounded operator $A$ in the space
$L^2(\mathcal G)$ with the domain $H^2_0(\mathcal G)$, if the
bounded operator $A-\mu: H^2_0(\mathcal G)\to L^2(\mathcal G)$ is
not Fredholm (a linear bounded operator between two Banach spaces is
Fredholm, if its kernel and cokernel are finite dimensional, and its
range is closed). Since the operator  $\Delta_{\lambda,r}$ is
m-sectorial, there exists a regular point of $\Delta_{\lambda,r}$ in
the simply connected set $\mathbb
C\setminus\sigma_{ess}(\Delta_{\lambda,r})$. Therefore
$$
\mathbb
C\setminus\sigma_{ess}(\Delta_{\lambda,r})\ni\mu\mapsto\Delta_{\lambda,r}-\mu:H^2_0(\mathcal
G)\to L^2(\mathcal G)
$$
is a Fredholm holomorphic operator function,
e.g.~\cite[Appendix]{KozlovMaz`ya}. Recall that the spectrum of a
Fredholm holomorphic operator function consists of isolated
eigenvalues of finite algebraic multiplicity, e.g.~\cite[Proposition
A.8.4]{KozlovMaz`ya}. As a consequence, the resolvent
\begin{equation}\label{res}\mathbb
C\setminus\sigma_{ess}(\Delta_{\lambda,r})\ni\mu\mapsto(\Delta_{\lambda,r}-\mu)^{-1}:L^2(\mathcal
G)\to H^2_0(\mathcal G)
\end{equation}
 is a meromorphic operator function.

 For  $f,g\in L^2(\mathcal G)$ and a sufficiently large $r>0$ we
define the quadratic form
$$
(f,g)_{\lambda,r}=\int_{\mathcal G}f \overline{g}\sqrt{\det\mathsf
e_{\lambda,r}}\,d\zeta\,d\eta,\quad \lambda\in\mathcal D_\alpha.
$$
This   form is bounded in $L^2(\mathcal G)$. Indeed, thanks to the
estimate~\eqref{cont+} on $\mathsf e^{-1}_{\lambda,r}$ we have
$0<c_1\leq|\det\mathsf e_{\lambda,r}(\zeta,\eta)|\leq c_2$ uniformly
in $\lambda\in\mathcal D_\alpha$ and
$(\zeta,\eta)\in\overline{\mathcal G}$.

Assume that $\lambda\in\mathcal D_\alpha$ is real. Then the form
$(\cdot,\cdot)_{\lambda,r}$ is the inner product  induced on
$\mathcal G$ by the  metric $\mathsf e_{\lambda,r}$, the norm
$\sqrt{(f,f)_{\lambda,r}}$ is equivalent to the norm
$\|f;L^2(\mathcal G)\|$, and $\Delta_{\lambda,r}$ is the
Laplace-Beltrami operator on $(\mathcal G,\mathsf e_{\lambda,r})$.
We have
\begin{equation}\label{RG}(\Delta
-\mu)u=\bigl((\Delta_{\lambda,r}-\mu)(u\circ\vartheta_{\lambda,r})\bigr)\circ\vartheta_{\lambda,r}^{-1}\quad
\forall u\in C_0^\infty(\overline{\mathcal G}).
\end{equation}
Assume that $\mu$ is not in the sector of $\Delta_{\lambda,r}$. Then
$(\Delta_{\lambda,r}-\mu)^{-1}$ is a bounded operator  and we can
rewrite~\eqref{RG} in the form
\begin{equation}\label{++}
(\Delta
-\mu)^{-1}f=\bigl((\Delta_{\lambda,r}-\mu)^{-1}(f\circ\vartheta_{\lambda,r})\bigr)\circ\vartheta_{\lambda,r}^{-1},
\end{equation}
where $f$ is in the set $\{f=(\Delta-\mu)u: u\in
C^\infty_0(\overline{\mathcal G})\}$. This set is dense in
$L^2(\mathcal G)$, because $C_0^\infty(\overline{\mathcal G})$ is
dense in $H^2_0(\mathcal G)$, and the operator
$\Delta-\mu:H^2_0(\mathcal G)\to L^2(\mathcal G)$ yields an
isomorphism. It is clear that
$(f\circ\vartheta_{\lambda,r},f\circ\vartheta_{\lambda,r})_{\lambda,r}=(f,f)$.
As a consequence, the (real) scaling $f\mapsto
f\circ\vartheta_{\lambda,r}$ realizes an isomorphism in
$L^2(\mathcal G)$, and the equality~\eqref{++} extends by continuity
to all $f\in L^2(\mathcal G)$. Taking the inner product  of the
equality~\eqref{++} with $g\in L^2(\mathcal G)$, and passing to the
variables $(\tilde\zeta,\tilde\eta)=\vartheta_\lambda(\zeta,\eta)$
in the right hand side, we obtain
\begin{equation}\label{h6}
\bigl((\Delta-\mu)^{-1}f,g
\bigr)=\bigl((\Delta_{\lambda,r}-\mu)^{-1}(f\circ\vartheta_{\lambda,r}),g\circ\vartheta_{\overline{\lambda},r}
\bigr)_{\lambda,r}.
\end{equation}Now we
assume that $f,g\in\mathcal H_\alpha(\mathcal G)$. Then
$f\circ\vartheta_{\lambda,r}$ and $g\circ\vartheta_{\lambda,r}$ are
$L^2(\mathcal G)$-valued analytic functions of $\lambda$ in the disk
$\mathcal D_\alpha$, see  Corollary~\ref{dense}.3. This together
with Proposition~\ref{sectorial}.4 implies that the right hand side
of~\eqref{h6} extends by analyticity from $\lambda\in \mathcal
D_\alpha\cap\mathbb R$ to all $\lambda\in\mathcal D_\alpha$. The
right hand side of~\eqref{h6} extends from all $\mu$ outside of the
sector of $\Delta_{\lambda,r}$ to a meromorphic function of $\mu\in
\Bbb C\setminus\sigma_{ess}(\Delta_{\lambda,r})$. In particular, for
all $\lambda\in\mathcal D_\alpha\setminus\Bbb R$ we have $\mu_0\in
\mathbb C\setminus\sigma_{ess}(\Delta_{\lambda,r})$, cf.
Figure~\ref{fig5}. Here $\mu_0$ is the same as in the formulation of
the theorem.

Now we are in position to prove assertion~1. Consider the projection
$$
\mathsf P=\operatorname{s-}\!\lim_{\epsilon\downarrow 0} i\epsilon
(\Delta-\mu_0-i\epsilon)^{-1}
$$
  onto the eigenspace of
the selfadjoint operator $\Delta$. Suppose, by contradiction, that
the resolvent~\eqref{res} has a pole at the point $\mu_0\in \mathbb
C\setminus\sigma_{ess}(\Delta_{\lambda,r})$. By
Corollary~\ref{dense}.2 there exist $f$ and $g$ in the space
$\mathcal H_\alpha(\mathcal G)$ such that $\mu_0$ is a pole of the
right hand side of~\eqref{h6}. The equality~\eqref{h6} implies that
$(\mathsf P f,g)\neq 0$, and thus $\ker(\Delta-\mu_0)\neq\{0\}$.
This is a contradiction. Assertion~1 is proven.

{\it Step 2.} We need to show that for any $\varrho\in
C_c^\infty(\overline{\mathcal G})$ the operator
$\varrho(\Delta-\mu_0-i\epsilon)^{-1}$
 tends to some
limits in the space of bounded operators  $\mathscr B(\mathscr
H_\alpha(\mathcal G), H^2_0(\mathcal G))$ as $\epsilon\downarrow 0$
and $\epsilon\uparrow 0$. We take a sufficiently large
$r=r(\varrho)>0$ such that $\supp\varrho\subset\overline{\mathcal
G_r}$. Then $\varrho\circ\vartheta_{\lambda,r}=\varrho$ for all
$\lambda\in\mathcal D_\alpha$.  Now we can pass from~\eqref{++} to
the equality
\begin{equation}\label{q1}
\varrho(\Delta
-\mu)^{-1}f=\varrho(\Delta_{\lambda,r}-\mu)^{-1}(f\circ\vartheta_{\lambda,r}).
\end{equation}
For $f\in \mathcal H_\alpha(\mathcal G)$ the equality~\eqref{q1}
extends by analyticity to all $\lambda\in\mathcal D_\alpha$.
Consider, for instance, the case $\Im\lambda>0$ (the case
$\Im\lambda<0$ is similar). Then the upper half-plane $\Bbb
C^+=\{\mu\in\Bbb C: \Im\mu>0\}$ and a complex neighborhood of the
point $\mu_0$ do not contain points of
$\sigma_{ess}(\Delta_{\lambda,r})$, cf. Figure~\ref{fig5}. Therefore
the right hand side of~\eqref{q1} has a meromorphic continuation in
$\mu$ to the union of $\Bbb C^+$ and a complex neighborhood of
$\mu_0$. Hence the left hand side of~\eqref{q1} has the same
meromorphic continuation. Clearly, a pole at $\mu_0$ may only appear
due to a pole of the resolvent~\eqref{res} at $\mu_0$, but it is a
regular point by assertion 1. Since $\varrho$ is an arbitrary smooth
function supported in $\mathcal G_{r}$ this proves assertion 3. In
order to prove assertion 2 it remains to note that
$$
\begin{aligned}
\|\lim_{\epsilon \downarrow 0}\varrho (\Delta
-\mu_0-i\epsilon)^{-1}f; H^2_0(\mathcal G)
\|=\|\varrho(\Delta_{\lambda,r}-\mu_0)^{-1}(f\circ\vartheta_{\lambda,r});
H^2_0(\mathcal G)\|
\\
\leq C({\varrho})\|(\Delta_{\lambda,r}-\mu_0)^{-1};L^2(\mathcal
G)\to H^2_0(\mathcal G) \|\|f\circ\vartheta_{\lambda,r};
L^2(\mathcal G)\|
\\
\leq C(\varrho,r,\mu_0)\|f; \mathcal H_\alpha(\mathcal G)\|.
\end{aligned}
$$
In the last inequality we used Corollary~\ref{dense}.1 and assertion
1.
\end{proof}

In the following two remarks we collect some results that can be
obtained by methods  developed in the proof of Theorem~\ref{LA}.
Although these results are not used in this paper, they provide
additional insights of the problem.
\begin{remark}\label{ABCS}  On the basis of the equality~\eqref{h6},
Corollary~\ref{dense}, and the description of
$\sigma_{ess}(\Delta_{\lambda, r})$ for $\lambda\in\mathcal
D_\alpha$, one can develop  an analog of the celebrated
Aguilar-Balslev-Combes-Simon theory of resonances~\cite{Cycon,Hislop
Sigal,Hunziker,Simon Reed iv}. We announce some results below, for
the proof we refer to~\cite{KalvinDL}.
\begin{enumerate}
\item The selfadjoint Dirichlet Laplacian $\Delta$ in $\mathcal G$ has
no singular continuous spectrum, its eigenvalues can accumulate only
at thresholds $\nu\in\sigma(\Delta_\Omega)$. (Examples of
accumulating eigenvalues can be found e.g. in~\cite{Edward}.)
\item The spectrum $\sigma(\Delta_{\lambda,r})$ of the m-sectorial
operator $\Delta_{\lambda,r}$ does not depend on the choice of the
scaling function satisfying~\eqref{ab1}--\eqref{ab3}. Moreover, the
spectrum $\sigma(\Delta_{\lambda,r})$ lies in the half-plain $
\overline{\Bbb C^+}$ in the case $\Im \lambda\geq 0$ and
$\sigma(\Delta_{\lambda,r})\subset \overline{\Bbb C^-}$ in the case
$\Im\lambda\leq 0$, where $\Bbb C^\pm=\{\mu\in\Bbb C:\Im\mu\gtrless
0\}$ and $\lambda\in \mathcal D_\alpha$.

\item A  point $\mu\in \Bbb
R\setminus\sigma(\Delta_\Omega)$ is an eigenvalue of $\Delta$ if and
only if it is an isolated eigenvalue of $\Delta_{\lambda,r}$, where
$\lambda\in \mathcal D_\alpha\setminus\Bbb R$ and $r>0$ is
sufficiently large.

\item  The resolvent
matrix elements $((\Delta-\mu)^{-1}f,g)$, where $f,g\in\mathcal
H_\alpha(\mathcal G)$, have meromorphic continuations from the
physical sheet $\Bbb C\setminus\sigma_{ess}(\Delta)$ across
$\sigma_{ess}(\Delta)$ to the set  $\Bbb
C\setminus\sigma_{ess}(\Delta_{\lambda, r})$. Moreover, $\mu$ is a
pole of the continuation for some $f,g\in\mathcal H_\alpha(\mathcal
G)$ if and only if it is an isolated eigenvalue of
$\Delta_{\lambda,r}$. The non-real isolated eigenvalues of
$\Delta_{\lambda,r}$ are naturally identified with resonances of
$\Delta$.

\item As $\lambda$ changes continuously in the disk $\mathcal
D_\alpha$, an isolated eigenvalue of $\Delta_{\lambda, r}$ survive
while it is not covered by one of the rotating rays of
$\sigma_{ess}(\Delta_{\lambda, r})$.
\end{enumerate}
\end{remark}
\begin{remark}\label{RAC} The argument of the second step in the proof
of Theorem~\ref{LA} allows also to see that the resolvent
$(\Delta-\mu)^{-1}: \mathcal H_\alpha(\mathcal G)\to
H^2_{0,\operatorname{loc}}(\mathcal G)$ has a meromorphic
continuation from the physical sheet $\Bbb
C\setminus\sigma_{ess}(\Delta)$ across the intervals $(\nu_-,\nu_+)$
between the neighboring thresholds $\nu_\pm\in\sigma(\Delta_\Omega)$
to a Riemann surface. The surface consists of the physical sheet
$\Bbb C\setminus\sigma_{ess}(\Delta)$ of the Dirichlet Laplacian and
an infinite number of the sectors $\{\mu\in \Bbb
C:0>\arg(\mu-\nu_-)>-2\alpha\}$ attached to $\Bbb C^+\subset \Bbb
C\setminus\sigma_{ess}(\Delta)$ and of the sectors $\{\mu\in \Bbb
C:0<\arg(\mu-\nu_-)<2\alpha\}$ attached to $\Bbb C^-\subset\Bbb
C\setminus\sigma_{ess}(\Delta)$ along the intervals $(\nu_-,
\nu_+)$. Indeed, for any $\varrho\in C_c^\infty(\overline{\mathcal
G})$ we can take a sufficiently large $r=r(\varrho)>0$ such that
$\supp\varrho\subset\overline{\mathcal G_r}$. As $\lambda$ varies in
$\mathcal D_\alpha\cap\Bbb C^+$ (resp. in $\mathcal D_\alpha\cap\Bbb
C^-$)  the strip between the neighboring rays $\{\mu\in\Bbb
C:\arg(z-\nu_\pm)=-2\arg (1+\lambda)\}$ of
$\sigma_{ess}(\Delta_{\lambda,r})$ sweeps the sector $\{\mu\in \Bbb
C:0>\arg(\mu-\nu_-)>-2\alpha\}$ (resp. the sector $\{\mu\in \Bbb
C:0<\arg(\mu-\nu_-)<2\alpha\}$) and the right hand side
of~\eqref{q1} provides the left hand side with a meromorphic
continuation to the strip. The poles of the continuation are
resonances of the Dirichlet Laplacian $\Delta$~\cite{Zworski}.
\end{remark}

The results listed in Remarks~\ref{ABCS} and~\ref{RAC} are new and
might be of their own interest. Traditionally, when studying
Laplacians in the waveguide-type of geometry, one imposes  more or
less restrictive assumptions on the rate of convergence of the
metric $\mathsf g$ on $(0,\infty)\times\overline{\Omega}$ to its
limit at infinity; see, e.g.,~\cite{chr
ST,DES,DEM,Edward,FroHislop,Guillope,IKL,Melrose,MelroseScat,Mueller}.
Contrastingly, our assumptions on the diffeomorphism $\varkappa$
allow for arbitrarily slow convergence of the metric $\mathsf
g=\varkappa^*\mathsf e$ to the Euclidean metric $\mathsf e$ at
infinity, see Section~\ref{sec3}. As a substitution for the
assumptions on the rate of
 convergence of the metric $\mathsf g$ at infinity we use assumptions on the
analytic regularity of the diffeomorphism $\varkappa$.

Let us also mention here the paper~\cite{Plamenevskii}, where
general elliptic problems whose coefficients slowly converge to
their limits at infinity are considered. It is shown that any finite
accumulation point of eigenvalues corresponding to exponentially
decaying eigenfunctions is a threshold, these accumulations are
characterized in terms of some non-classical ``augmented scattering
matrices.'' An additional investigation on decay of eigenfunctions
at infinity is required in order to say whether these results
 describe accumulations of eigenvalues of the
Dirichlet Laplacian $\Delta$ or not. This goes beyond the scope of
the present paper, we refer to~\cite{KalvinExp}.

\section{Exponential decay of solutions in infinite PMLs}\label{s5} In this
section we prove the following theorem.
\begin{theorem}\label{ED}Assume that   $\mu_0\in\Bbb
R\setminus\sigma(\Delta_\Omega)$ is not an eigenvalue of the
selfadjoint Dirichlet Laplacian $\Delta$ in $L^2(\mathcal G)$, the
scaling parameter $\lambda\in\mathcal D_\alpha$ is not real, and
\begin{equation}\label{interval}
0\leq\beta<min_{\nu\in\sigma(\Delta_\Omega)}|\Im\{
(1+\lambda)\sqrt{\mu_0-\nu}\}|.
\end{equation}
 Let $\mathsf s(\zeta,\eta)$ stand for
the scaling function $\Bbb R_+\times\overline{\Omega}\ni(x,y)\mapsto
s(x)$ written in the coordinates $(\zeta,\eta)\in\overline{\mathcal
C}$ and extended to $\overline{\mathcal G}$ by zero;
see~\eqref{ab1}--\eqref{ab3}. Then for all sufficiently large $r>0$
and all $\mathcal F\in L^2(\mathcal G)$ satisfying $e^{\beta \mathsf
s}\mathcal F\in L^2(\mathcal G)$ the estimate
\begin{equation}\label{est++}
\|e^{\beta \mathsf s}(\Delta_{\lambda,r}-\mu_0)^{-1}\mathcal
F;H^2_0(\mathcal G)\|\leq C\|e^{\beta \mathsf s}\mathcal F;
L^2(\mathcal G)\|
\end{equation}
is valid with a constant $C$ independent of $\mathcal F$.
\end{theorem}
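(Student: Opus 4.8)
The plan is to reduce the weighted estimate \eqref{est++} to the invertibility of a conjugated operator. Set $\Delta^{(\beta)}_{\lambda,r}=e^{\beta\mathsf s}\,\Delta_{\lambda,r}\,e^{-\beta\mathsf s}$, regarded as an operator from $H^2_0(\mathcal G)$ to $L^2(\mathcal G)$. A direct computation shows that conjugation replaces both occurrences of $\nabla_{\zeta\eta}$ in \eqref{v2} by $\nabla_{\zeta\eta}-\beta\,\nabla_{\zeta\eta}\mathsf s$, so that $\Delta^{(\beta)}_{\lambda,r}-\Delta_{\lambda,r}$ is a first order differential operator whose coefficients, together with all their derivatives, are bounded uniformly in $\lambda\in\mathcal D_\alpha$; here I use that $\nabla_{\zeta\eta}\mathsf s$ is smooth and bounded (by \eqref{ab1}--\eqref{ab3} and the assumptions on $\varkappa$) together with the bounds \eqref{cont+} on $\mathsf e^{-1}_{\lambda,r}$. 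Writing $u=(\Delta_{\lambda,r}-\mu_0)^{-1}\mathcal F$ and $v=e^{\beta\mathsf s}u$ one has $(\Delta^{(\beta)}_{\lambda,r}-\mu_0)v=e^{\beta\mathsf s}\mathcal F$, so \eqref{est++} is equivalent to the bound $\|v;H^2_0(\mathcal G)\|\le C\,\|e^{\beta\mathsf s}\mathcal F;L^2(\mathcal G)\|$, that is, to the boundedness of $(\Delta^{(\beta)}_{\lambda,r}-\mu_0)^{-1}\colon L^2(\mathcal G)\to H^2_0(\mathcal G)$.

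Next I would locate the essential spectrum of $\Delta^{(\beta)}_{\lambda,r}$ by applying to it the elliptic theory of Section~\ref{ess spectrum}, which also yields that the graph norm of $\Delta^{(\beta)}_{\lambda,r}$ is equivalent to the $H^2_0(\mathcal G)$-norm. Since $\mathsf e^{-1}_{\lambda,r}\to\diag\{(1+\lambda)^{-2},\operatorname{Id}\}$ at infinity by \eqref{lim} and $\nabla_{\zeta\eta}\mathsf s$ tends there to the unit vector along the axis (where $s'=1$), the limit operator at infinity separates variables and, on the eigenspace of $\Delta_\Omega$ with eigenvalue $\nu$, reduces to $-(1+\lambda)^{-2}(\partial_x-\beta)^2+\nu$. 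The associated operator pencil shows that $\mu_0\in\sigma_{ess}(\Delta^{(\beta)}_{\lambda,r})$ precisely when $(1+\lambda)^2(\mu_0-\nu)=(k+i\beta)^2$ for some real $k$ and some $\nu\in\sigma(\Delta_\Omega)$; taking imaginary parts, this happens exactly when $\beta=\bigl|\Im\{(1+\lambda)\sqrt{\mu_0-\nu}\}\bigr|$ for some $\nu$. Hence the hypothesis \eqref{interval} guarantees $\mu_0\notin\sigma_{ess}(\Delta^{(\beta)}_{\lambda,r})$, so $\Delta^{(\beta)}_{\lambda,r}-\mu_0\colon H^2_0(\mathcal G)\to L^2(\mathcal G)$ is Fredholm. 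As the exponent $t$ runs over $[0,\beta]$ the strict inequality in \eqref{interval} keeps $\mu_0$ off the essential spectrum of every $\Delta^{(t)}_{\lambda,r}$, so $\{\Delta^{(t)}_{\lambda,r}-\mu_0\}_{t\in[0,\beta]}$ is a continuous family of Fredholm operators of constant index; at $t=0$ the operator $\Delta_{\lambda,r}-\mu_0$ is invertible by Theorem~\ref{LA}.1, and therefore the index is zero.

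It remains to rule out a kernel. If $v\in H^2_0(\mathcal G)$ and $(\Delta^{(\beta)}_{\lambda,r}-\mu_0)v=0$, then $u=e^{-\beta\mathsf s}v$ again lies in $H^2_0(\mathcal G)$, because $e^{-\beta\mathsf s}\le 1$ and its derivatives are bounded, and it satisfies $(\Delta_{\lambda,r}-\mu_0)u=0$; since $\mu_0$ is a regular point of $\Delta_{\lambda,r}$ by Theorem~\ref{LA}.1, we get $u=0$ and hence $v=0$. A Fredholm operator of index zero with trivial kernel is an isomorphism, so $(\Delta^{(\beta)}_{\lambda,r}-\mu_0)^{-1}$ is bounded from $L^2(\mathcal G)$ onto $H^2_0(\mathcal G)$, which is exactly \eqref{est++}. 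I expect the main obstacle to be the essential-spectrum localization for the conjugated operator: one must verify that $\Delta^{(\beta)}_{\lambda,r}$ fits the framework of Section~\ref{ess spectrum} (equivalence of the graph norm with the $H^2_0$-norm and reduction to the limit operator at infinity) and then carry out the pencil computation carefully enough to recover the exact threshold in \eqref{interval}.
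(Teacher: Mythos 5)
Your proposal is correct and its architecture coincides with the paper's: conjugate by $e^{\beta\mathsf s}$, use the essential-spectrum localization of Section~\ref{ess spectrum} (Proposition~\ref{ess}) to see that condition~\eqref{interval} keeps $\mu_0$ off $\sigma_{ess}(e^{\beta\mathsf s}\Delta_{\lambda,r}e^{-\beta\mathsf s})$, and rule out a kernel by observing that $\Psi\in\ker(e^{\beta\mathsf s}\Delta_{\lambda,r}e^{-\beta\mathsf s}-\mu_0)$ forces $e^{-\beta\mathsf s}\Psi\in\ker(\Delta_{\lambda,r}-\mu_0)=\{0\}$ by Theorem~\ref{LA}.1; your pencil computation reproduces condition~\eqref{eq9} exactly. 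The one step where you genuinely diverge is the passage from ``Fredholm with trivial kernel'' to ``isomorphism'': you deform the weight exponent $t$ along $[0,\beta]$, noting that \eqref{interval} keeps $\mu_0$ off $\sigma_{ess}$ of every $e^{t\mathsf s}\Delta_{\lambda,r}e^{-t\mathsf s}$, and invoke constancy of the Fredholm index along this norm-continuous family, anchored at $t=0$ where Theorem~\ref{LA}.1 gives invertibility. The paper instead realizes the conjugated operator as the m-sectorial Friedrichs extension of a sectorial form (an analogue of Lemma~\ref{r bound} for the first-order perturbation), which supplies regular points on the far negative real semiaxis, and then applies the theory of Fredholm holomorphic operator functions in $\mu$ on the simply connected component of $\Bbb C\setminus\sigma_{ess}$ containing those points. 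Both arguments are sound; yours has the mild advantage of bypassing the sectorial-form estimates for the conjugated operator, while the paper's yields the extra information that $\mu_0$ could at worst have been an isolated eigenvalue of finite algebraic multiplicity. One small point to make explicit in your write-up: the reduction of \eqref{est++} to the boundedness of $(e^{\beta\mathsf s}\Delta_{\lambda,r}e^{-\beta\mathsf s}-\mu_0)^{-1}$ presupposes $e^{\beta\mathsf s}u\in H^2_0(\mathcal G)$ for $u=(\Delta_{\lambda,r}-\mu_0)^{-1}\mathcal F$; this follows a posteriori by applying the inverse you construct to $e^{\beta\mathsf s}\mathcal F$ and using uniqueness in $H^2_0(\mathcal G)$ from Theorem~\ref{LA}.1 to identify the result with $e^{\beta\mathsf s}u$, exactly as in the paper's final displayed identity.
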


Theorem~\ref{ED} together with Theorem~\ref{LA} shows that under the
additional assumption $e^{\beta \mathsf
s}(f\circ\vartheta_{\lambda,r})\in L^2(\mathcal G)$ on $f\in
\mathcal H_\alpha(\mathcal G)$ infinite PMLs absorb outgoing or
incoming solutions (depending on the sign of $\Im\lambda$) so
effectively that the function $u_{\lambda,r}$ in Theorem~\ref{ED}.3
exponentially decays at infinity in the mean.
\begin{proof} Consider the conjugated operator $e^{\beta\mathsf s}\Delta_{\lambda,r}e^{-\beta\mathsf s}$ as an unbounded operator in
$L^2(\mathcal G)$ with the domain $C_0^\infty(\overline{\mathcal
G})$. With this operator we associate the quadratic form $
q_{\lambda,r}^\beta[u,u]=(e^{\beta\mathsf
s}\Delta_{\lambda,r}e^{-\beta\mathsf s}u,u)_{\lambda,r}$. Observe
that
$$
\begin{aligned}
q_{\lambda,r}^\beta[u,u]- &
q_{\lambda,r}[u,u]=-\beta^2\int_{\mathcal G} \bigl\langle\mathsf
e_{\lambda,r}^{-1} u\nabla_{\zeta \eta}\mathsf s,  u\nabla_{\zeta
\eta}\mathsf s \bigr\rangle\,d\zeta\,d\eta
\\
& -\beta\int_{\mathcal G} \bigl\langle\bigl({\det \mathsf
e_{\lambda,r} }\bigr)^{1/2}\mathsf e_{\lambda,r}^{-1} u\nabla_{\zeta
\eta}\mathsf s, \nabla_{\zeta \eta}\bigl({\det \mathsf
e_{\overline{\lambda},r} }\bigr)^{-1/2}
u\bigr\rangle\,d\zeta\,d\eta\\
 & +\beta\int_{\mathcal G}
\bigl\langle\mathsf e_{\lambda,r}^{-1} \nabla_{\zeta \eta}u,
u\nabla_{\zeta \eta}\mathsf s \bigr\rangle\,d\zeta\,d\eta,
\end{aligned}
$$
where $q_{\lambda,r}$ is the same as in~\eqref{qfD}. Since the right
hand side depends linearly on $\nabla_{\zeta \eta}u$, similarly to
the  the proof of Lemma~\ref{r bound} one can deduce
$$
\bigl|q_{\lambda,r}^\beta[u,u]-q_{\lambda,r}[u,u]\bigr|\leq \epsilon
|( -\nabla_{\zeta \eta}\cdot\mathsf e_{\lambda,r}^{-1} \nabla_{\zeta
\eta}u,u)
 |+C\epsilon^{-1}\|u; L^2(\mathcal G)\|^2
$$
with an arbitrary small $\epsilon>0$ and a constant $C$ independent
of $u\in C_0^\infty(\overline{\mathcal G})$. This estimate together
with~\eqref{i1},~\eqref{i2}, and Lemma~\ref{l1} implies that
 for all sufficiently large $r>0$ we
have
\begin{equation}\label{i1+}
 |\arg\bigl( q^\beta_{\lambda,r} [u,u]+\gamma \|u; L^2(\mathcal G)\|^2\bigr)|\leq \varphi
\end{equation}
with some angle $\varphi<\pi/2$ and $\gamma>0$, which are
independent of $u\in C_0^\infty(\overline{\mathcal G})$. Therefore
$e^{\beta\mathsf s}\Delta_{\lambda,r} e^{-\beta\mathsf s}$ with the
domain $C_0^\infty(\overline{\mathcal G})$ is a densely defined
sectorial operator in $L^2(\mathcal G)$. Let $\mathrm
D(e^{\beta\mathsf s}\Delta_{\lambda,r} e^{-\beta\mathsf s})$ be the
domain of its m-sectorial Friedrichs extension~\cite[Chapter
VI.2]{Kato}. The Friedrichs extension will also be denoted by
$e^{\beta\mathsf s}\Delta_{\lambda,r} e^{-\beta\mathsf s}$.

As in the proof of Proposition~\ref{sectorial}.1 we conclude that
$C_0^\infty(\overline{\mathcal G})$ is a core of the m-sectorial
operator $e^{\beta\mathsf s}\Delta_{\lambda,r} e^{-\beta\mathsf s}$.
In Section~\ref{ess spectrum} we will show that the graph norm of
$e^{\beta\mathsf s}\Delta_{\lambda,r} e^{-\beta\mathsf s}$ is
equivalent to the norm in $H^2_0(\mathcal G)$; hence $\mathrm
D(e^{\beta\mathsf s}\Delta_{\lambda,r} e^{-\beta\mathsf
s})=H^2_0(\mathcal G)$. Furthermore, we will localize the essential
spectrum $\sigma_{ess}(e^{\beta\mathsf s}\Delta_{\lambda,r}
e^{-\beta\mathsf s})$ of the m-sectorial operator $e^{\beta\mathsf
s}\Delta_{\lambda,r} e^{-\beta\mathsf s}$,  see
Proposition~\ref{ess}. It turns out that the essential spectrum
consists of an infinite number of parabolas, see Figure~\ref{fig++}.
In the case $\beta=0$ the parabolas collapse to the dashed rays
originating from the thresholds $\nu\in\sigma(\Delta_\Omega)$ and we
obtain the essential spectrum $\sigma_{ess}(\Delta_{\lambda,r})$.

The m-sectorial operator $e^{\beta\mathsf s}\Delta_{\lambda,r}
e^{-\beta\mathsf s}$
 defines the Fredholm holomorphic operator function
$$
\mu\mapsto e^{\beta\mathsf s}\Delta_{\lambda,r} e^{-\beta\mathsf
s}-\mu: H^2_0(\mathcal G)\to L^2(\mathcal G)
$$
 on the simply connected subset of $\Bbb
C\setminus\sigma_{ess}(e^{\beta\mathsf s}\Delta_{\lambda,r}
e^{-\beta\mathsf s})$ containing an infinite part of the real
negative semiaxis (regular points of $e^{\beta\mathsf
s}\Delta_{\lambda,r} e^{-\beta\mathsf s}$).
Condition~\eqref{interval} on $\beta$ guarantees that the point
$\mu_0$ is in this simply connected subset. As the spectrum of a
Fredholm holomorphic operator function consists of isolated
eigenvalues of finite multiplicity, $\mu_0$ is a regular point or an
eigenvalue of $e^{\beta\mathsf s}\Delta_{\lambda,r} e^{-\beta\mathsf
s}$. The inclusion $\Psi\in \ker (e^{\beta\mathsf
s}\Delta_{\lambda,r} e^{-\beta\mathsf s}-\mu_0)$ implies
$e^{-\beta\mathsf s}\Psi\in\ker (\Delta_{\lambda,r} -\mu_0)$, and
hence $\Psi\equiv 0$ by Theorem~\ref{LA}.1. Thus the operator
$e^{\beta\mathsf s}\Delta_{\lambda,r} e^{-\beta\mathsf s}-\mu_0$
yields an isomorphism between the spaces $ H^2_0(\mathcal G)$ and
$L^2(\mathcal G)$. This together with the equality
$$
(e^{\beta\mathsf s}\Delta_{\lambda,r} e^{-\beta\mathsf
s}-\mu_0)^{-1}e^{\beta \mathsf s}\mathcal F=e^{\beta\mathsf
s}(\Delta_{\lambda,r}-\mu_0)^{-1} \mathcal F,\quad e^{\beta\mathsf
s}\mathcal F\in L^2(\mathcal G),
$$
justifies the estimate~\eqref{est++}.
\begin{figure}
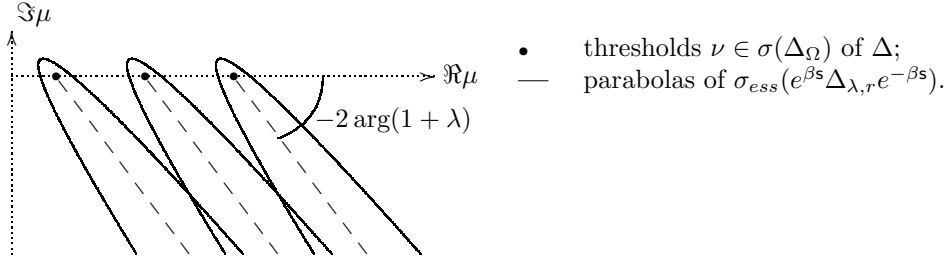
\centering
\[
\xy (0,0)*{\xy0;/r.28pc/:{\ar@{.>}(0,10);(50,10)*{\ \Re \mu}};
{\ar@{.>}(0,-10);(0,15)}; (2,17)*{\ \Im \mu};
{\ar@{--}(5,10)*{\scriptstyle\bullet};(20,-10)};(14,-10);(26,-10)**\crv{(-13,34)};
{\ar@{--}(15,10)*{\scriptstyle\bullet};(30,-10)};(24,-10);(36,-10)**\crv{(-3,34)};
{(25,10)*{\scriptstyle\bullet};(40,-10)*{} **\dir{--}};
(35,10);(30,3)**\crv{(35,5)*{\quad\quad\quad\quad\quad\
-2\arg(1+\lambda)}};(34,-10);(46,-10)**\crv{(7,34)};
\endxy};
(65,0)*{\xy (20,20)*{
\begin{array}{ll}
     {\scriptstyle\bullet } & \text{ thresholds $\nu\in\sigma(\Delta_\Omega)$ of $\Delta$;} \\
    \text{\bf---} & \text{ parabolas of $\sigma_{ess}(e^{\beta \mathsf
s}\Delta_{\lambda,r}e^{-\beta \mathsf s})$.}
\end{array}};
\endxy};
\endxy
\]
\caption{Essential spectrum of the conjugated operator $e^{\beta
\mathsf s}\Delta_{\lambda,r}e^{-\beta \mathsf s}$  for
$\Im\lambda>0$ and $\beta\gtrless 0$.}\label{fig++}
\end{figure}
\end{proof}
\section{Localization of the essential spectrum}\label{ess spectrum}
In this section we  localize the essential spectrum
$\sigma_{ess}(e^{\beta\mathsf s}\Delta_{\lambda,r}e^{-\beta\mathsf
s})$ of the  m-sectorial operator $e^{\beta\mathsf
s}\Delta_{\lambda,r}e^{-\beta\mathsf s}$ in $L^2(\mathcal G)$ with
parameters $\lambda\in\mathcal D_\alpha$ and $\beta\in \Bbb R$; here
 $\mathsf s$ is the same as in Theorem~\ref{ED}. In particular,  in the case $\beta=0$ we find $\sigma_{ess}(\Delta_{\lambda,r})$.
 We also prove that $\mathrm D(e^{\beta\mathsf
s}\Delta_{\lambda,r}e^{-\beta\mathsf s})=H^2_0(\mathcal G)$. In
other words, we show that the information on the essential spectrum
and the domain of $e^{\beta\mathsf
s}\Delta_{\lambda,r}e^{-\beta\mathsf s}$ we used in the proofs of
Theorem~\ref{LA} and Theorem~\ref{ED} is correct.

Let us note that for fixed
 $\lambda$ and $\beta$ the spectrum
$\sigma_{ess}(e^{\beta\mathsf s}\Delta_{\lambda,r}e^{-\beta\mathsf
s})$ depends only on the behavior of the scaling function $\mathsf
s$ and the matrix $\mathsf e_{\lambda,r}$ outside any compact region
of $\mathcal G$. In order to control $\sigma_{ess}(e^{\beta\mathsf
s}\Delta_{\lambda,r}e^{-\beta\mathsf s})$ we imposed the
condition~\eqref{ab3}.

\begin{proposition}\label{ess} Assume that $\lambda\in\mathcal D_\alpha$, $\beta\in\Bbb R$, and $r>0$ is sufficiently
large. Then the following assertions hold.
\begin{itemize}
\item[1.] The Hilbert spaces
$\mathrm D(e^{\beta\mathsf s}\Delta_{\lambda,r}e^{-\beta\mathsf s})$
and  $H^2_0(\mathcal G)$ are coincident and their norms are
equivalent.

\item[2.] The bounded operator
\begin{equation}\label{coc}
e^{\beta\mathsf s}\Delta_{\lambda,r}e^{-\beta\mathsf s}-\mu:
H^2_0(\mathcal G)\to L^2(\mathcal G)
\end{equation}
 is not  Fredholm (or, equivalently, $\mu\in\sigma_{ess}(e^{\beta\mathsf s}\Delta_{\lambda,r}e^{-\beta\mathsf s})$ ) if and only if the parameters
$\mu$, $\lambda$, and $\beta$  meet the condition
\begin{equation}\label{eq9}
\nu-\mu=(1+\lambda)^{-2}(\beta+i\xi)^2\text{ for some
}\nu\in\sigma(\Delta_\Omega) \text{ and } \xi\in\Bbb R.
\end{equation}
\end{itemize}
\end{proposition}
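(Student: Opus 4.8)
The plan is to identify, via the diffeomorphism $\varkappa$, the operator at infinity on the cylinder and then to invoke the Fredholm theory of elliptic boundary value problems in domains with cylindrical ends~\cite{KozlovMazyaRossmann,MP2}.

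\emph{Assertion 1.} Write $B=e^{\beta\mathsf s}\Delta_{\lambda,r}e^{-\beta\mathsf s}$. Conjugation by $e^{\pm\beta\mathsf s}$ alters $\Delta_{\lambda,r}$ only by first- and zeroth-order terms whose coefficients are bounded together with their derivatives (recall that $s'$ and $s''$ are bounded and that $\mathsf e^{-1}_{\lambda,r}$ obeys~\eqref{cont+}); in particular $B$ has the same principal part as $\Delta_{\lambda,r}$, which is uniformly strongly elliptic by~\eqref{strong ell}. Hence $\|Bu;L^2(\mathcal G)\|\le C\|u;H^2_0(\mathcal G)\|$ follows from the boundedness of the coefficients, while the converse coercive a~priori estimate $\|u;H^2_0(\mathcal G)\|\le C(\|Bu;L^2(\mathcal G)\|+\|u;L^2(\mathcal G)\|)$ for the Dirichlet problem associated with a uniformly strongly elliptic operator on a quasi-cylinder with smooth boundary is standard; see, e.g.,~\cite{KozlovMazyaRossmann}. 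Thus the graph norm of $B$ is equivalent to the norm of $H^2_0(\mathcal G)$. Since $C_0^\infty(\overline{\mathcal G})$ is a core of $B$ (argue exactly as in the proof of Proposition~\ref{sectorial}.1) and is dense in $H^2_0(\mathcal G)$, the spaces $\mathrm D(B)$ and $H^2_0(\mathcal G)$ coincide.

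\emph{Reduction to the cylinder and the model operator.} For assertion~2 I pass to the coordinates $(x,y)$ on the semicylinder by $\varkappa$; then $\mathsf s$ becomes $s(x)$ with $s'(x)=1$ for large $x$, and $\Delta_{\lambda,r}$ becomes the Laplace--Beltrami operator of $\mathsf g_{\lambda,r}$. By~\eqref{lim} its coefficients stabilize as $x\to+\infty$, namely $\mathsf g^{-1}_{\lambda,r}\to\operatorname{diag}\{(1+\lambda)^{-2},\operatorname{Id}\}$ and $\det\mathsf g_{\lambda,r}\to(1+\lambda)^{2}$. Hence the coefficients of $B$ tend, as $x\to+\infty$, to those of the constant-coefficient model operator on the infinite cylinder $\mathbb R\times\Omega$ with the Dirichlet condition,
\[
M=-(1+\lambda)^{-2}(\partial_x-\beta)^2+\Delta_\Omega, \qquad \Delta_\Omega=-\sum_m\partial^2_{y_m}.
\]

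\emph{Assertion 2.} By the cited theory, $B-\mu\colon H^2_0(\mathcal G)\to L^2(\mathcal G)$ is Fredholm if and only if the limit operator $M-\mu$ is invertible on the infinite cylinder, and the latter is governed by the operator pencil obtained from the Fourier transform $\partial_x\mapsto i\xi$ in the axial variable,
\[
\mathfrak M(\xi)=-(1+\lambda)^{-2}(i\xi-\beta)^2+\Delta_\Omega-\mu, \qquad \xi\in\mathbb R,
\]
acting from $\mathrm D(\Delta_\Omega)$ to $L^2(\Omega)$. Since $\Delta_\Omega$ has discrete spectrum $\sigma(\Delta_\Omega)$, the operator $\mathfrak M(\xi)$ is invertible exactly when $\nu-\mu\ne(1+\lambda)^{-2}(i\xi-\beta)^2$ for every $\nu\in\sigma(\Delta_\Omega)$. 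Therefore $B-\mu$ fails to be Fredholm iff $\nu-\mu=(1+\lambda)^{-2}(i\xi-\beta)^2$ for some $\nu\in\sigma(\Delta_\Omega)$ and $\xi\in\mathbb R$; replacing $\xi$ by $-\xi$ rewrites $(i\xi-\beta)^2=(\beta-i\xi)^2$ as $(\beta+i\xi)^2$, which is precisely~\eqref{eq9}.

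\emph{Main obstacle.} The delicate point is the Fredholm criterion itself, since the paper imposes no \emph{rate} on the convergence of the metric at infinity (cf.~\eqref{lim},~\eqref{cont+}); one cannot simply borrow a weighted-space theory with prescribed decay. I would instead argue directly, using only that the coefficients of $B$ converge to those of $M$. For the ``only if'' direction, if $\mathfrak M(\xi_0)\phi=0$ with $\phi$ a Dirichlet eigenfunction of $\Delta_\Omega$ for the eigenvalue $\nu$, I construct a singular Weyl sequence $u_k=\chi_k(x)e^{i\xi_0 x}\phi(y)$, normalized in $L^2$ and with cutoffs $\chi_k$ whose supports recede to infinity; then $u_k\rightharpoonup 0$ while $\|(B-\mu)u_k;L^2(\mathcal G)\|\to 0$, because $M u_k=\mu u_k$ up to commutator terms supported where $\chi_k'\ne0$ and because the coefficient difference $B-M$ is small on $\operatorname{supp}\chi_k$ for $k$ large. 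The recession of the support is what makes arbitrarily slow convergence harmless. For the ``if'' direction I would build a parametrix for $B-\mu$ by patching the convolution inverse of $M-\mu$ near infinity (available once the pencil is invertible for all real $\xi$) with local elliptic parametrices on a compact part; the remainder involves the coefficient difference $B-M$, which tends to zero at infinity, so together with Rellich compactness it is a compact operator, again without any assumption on the rate. Handling these two constructions with only qualitative stabilization of the coefficients is the crux of the argument.
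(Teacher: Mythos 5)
Your overall strategy coincides with the paper's: both proofs pass to the coordinates $(x,y)$, identify the constant-coefficient model operator on the infinite cylinder, reduce the Fredholm question to the invertibility of the pencil $\xi\mapsto\Delta_\Omega+(1+\lambda)^{-2}(\beta+i\xi)^2-\mu$ obtained by Fourier transform (the operator~\eqref{mo} in the paper), and, when~\eqref{eq9} holds, disprove Fredholmness with oscillating test functions of the form $\chi(x/\ell)e^{i\xi_0 x}\Phi(y)$ whose supports recede to infinity (cf.~\eqref{test}); in both arguments the receding supports are exactly what makes the absence of a decay rate in~\eqref{lim} harmless. The technical implementations differ. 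The paper builds no parametrix: it proves the coercive estimate~\eqref{peetre} by combining local elliptic estimates on a compact part with the isomorphism property of~\eqref{mo} and the smallness~\eqref{eq11} of the coefficient difference near infinity, invokes Peetre's lemma to obtain closed range and finite-dimensional kernel, and repeats the estimate for the adjoint operator to bound the cokernel; the ``only if'' direction is then obtained by contradiction with that same a priori estimate rather than by a weak-convergence singular-sequence argument. Your two-sided parametrix is an equally standard route to the same conclusions. For assertion~1 the paper extracts the equivalence of the graph norm with the $H^2_0(\mathcal G)$-norm from~\eqref{1} with $\mathsf w\equiv 1$ at a regular point $\mu$; this is in substance the ``standard'' global a priori estimate you invoke, which does hold here because the assumptions on $\varkappa$ give uniformly bounded coefficients and a uniformly regular boundary.

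One statement in your sketch is wrong as written and needs repair. The remainder of your parametrix is \emph{not} compact: the piece in which the coefficient difference $B-M$ hits the inverse of the model operator composes a second-order operator, whose coefficients merely tend to zero at infinity with no prescribed rate, with an operator gaining exactly two derivatives; no regularity is gained, and multiplication by a function tending to zero at infinity is not a compact operator on $L^2$ of an unbounded cylinder. What is true --- and what suffices --- is that this piece has operator norm as small as desired once the patching region is pushed far enough toward infinity (by~\eqref{lim} and the uniform bound on $(M-\mu)^{-1}:L^2(\Bbb R\times\Omega)\to H^2_0(\Bbb R\times\Omega)$), while the cutoff-commutator terms are first order with compact support and hence compact by Rellich. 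Since invertibility modulo ``small plus compact'' still yields semi-Fredholmness on each side, your conclusion stands, but the dichotomy small-versus-compact must be kept explicit; it is precisely the device the paper uses in passing from~\eqref{eq11} to~\eqref{2}.
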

\begin{proof}
The proof is essentially based on methods of the theory  of elliptic
non-homogeneous boundary value
problems~\cite{KozlovMaz`ya,KozlovMazyaRossmann,MP2,Lions Magenes}.
We will rely on the following  lemma  due to Peetre, see
e.g.~\cite[Lemma~5.1]{Lions Magenes},
\cite[Lemma~3.4.1]{KozlovMazyaRossmann} or~\cite{Peetre}:
\begin{itemize}
\item[]{\it Let $\mathcal X,\mathcal Y$ and $\mathcal Z$ be Banach spaces, where $\mathcal X$ is compactly embedded into $\mathcal Z$. Furthemore, let $\mathcal L$ be a linear bounded operator from $\mathcal X$ to $\mathcal Y$. Then the next two assertions are equivalent: (i) the range of $\mathcal L$ is closed in $\mathcal Y$ and $\dim \ker \mathcal L<\infty$, (ii) there exists a constant $C$, such that
    \begin{equation}\label{1}
    \|u;{\mathcal X}\|\leq C(\|\mathcal L u;{\mathcal Y}\|+\|u;\mathcal Z\|)\quad \forall u\in \mathcal X.
    \end{equation}}
\end{itemize}
Below we assume that $\mu$, $\lambda$, and $\beta$ does not meet the
condition~\eqref{eq9} and establish the coercive estimate
 \begin{equation}\label{peetre}
\|u;H^2_0(\mathcal G)\|\leq C(\|(e^{\beta\mathsf
s}\Delta_{\lambda,r}e^{-\beta\mathsf s}-\mu)u; L^2(\mathcal
G)\|+\|\mathsf w u; L^2(\mathcal G)\|)\quad \forall u\in
H^2_0(\mathcal G).
\end{equation}
Here $\mathsf w\in C^\infty(\overline{\mathcal G}) $ is a positive
rapidly decreasing at infinity  weight, such that the embedding of
$H^2_0(\mathcal G)$ into the weighted space $L^2(\mathcal G;\mathsf
w)$ with the norm $\|\mathsf w \cdot; L^2(\mathcal G)\|$ is compact.
Note that~\eqref{peetre} is an estimate of type~\eqref{1} for the
operator~\eqref{coc}.

 The strongly
elliptic differential operator $e^{\beta\mathsf
s}\Delta_{\lambda,r}e^{-\beta\mathsf s}$ endowed with  the Dirichlet
boundary condition set up a regular elliptic boundary value problem.
Solutions of a regular elliptic boundary value problem satisfy local
coercive estimates, e.g.~\cite{Lions Magenes}
or~\cite{KozlovMazyaRossmann}. Thus we have the local coercive
estimate
\begin{equation}\label{a p}
\|\rho_T u;H^2_0(\mathcal G)\|\leq C(\|\varrho_T(e^{\beta\mathsf
s}\Delta_{\lambda,r}e^{-\beta\mathsf s}-\mu)u; L^2(\mathcal
G)\|+\|\varrho_T u; L^2(\mathcal G)\|).
\end{equation}
Here $\rho_T$ and $\varrho_T $ are smooth compactly supported cutoff
functions in $\mathcal G$ such that $\rho_T(\zeta,\eta)=1$ for
$|\zeta|<T+1$ and $\varrho_T\rho_T=\rho_T$, where  $T$ is a large
fixed number.

Let $\chi_T\in C^\infty(\overline{\mathcal G})$ be another cutoff
function such that $\chi_T(\zeta,\eta)=1$ for $|\zeta|>T$ and
$\chi_T(\zeta,\eta)=0$ for $|\zeta|<T-1$. On the next step we
establish the estimate~\eqref{peetre} with $u$ replaced by  $\chi_T
u$. We will do it in the coordinates $(x,y)\in\Bbb
R_+\times\overline{\Omega}$.

Let $L^2(\Bbb R\times\Omega)$ be the space of functions in the
infinite cylinder $\Bbb R\times\Omega$ with the norm
$\bigl(\int_{\Bbb R}\|\mathsf u(x);
L^2(\Omega)\|^2\,dx\bigr)^{1/2}$. Introduce the Sobolev space
$H^2_0(\Bbb R\times\Omega)$ of functions with zero Dirichlet data on
$\mathbb R\times\partial\Omega$ as the completion of the set
$C_0^\infty(\Bbb R\times\overline{\Omega})$ with respect to the norm
$$
\|\mathsf u;H^2_0(\mathbb R\times\Omega)\|=\Bigl(\sum_{ \ell+|m|\leq
2 }
 \|\partial_x^\ell\partial _y^m \mathsf u;
L^2(\Bbb R\times\Omega)\|^2\Bigr)^{1/2}.
$$
Denote $\mathsf u=(\chi_T u)\circ\varkappa$, where $\varkappa$ is
the diffeomorphism~\eqref{diff}. Let
 \begin{equation}\label{LB_l}
 \triangle_{\lambda,r}=-\bigl(\det \mathsf
 g_{\lambda,r} \bigr)^{-1/2}\nabla_{xy}\cdot \bigl(\det \mathsf
 g_{\lambda,r} \bigr)^{1/2}\mathsf g^{-1}_{\lambda,r}
 \nabla_{xy},\quad \lambda\in\mathcal D_\alpha,
 \end{equation}
 be the operator~$\Delta_{\lambda,r}$ written in the
coordinates $(x,y)$. Here $\mathsf g_{\lambda,r}$ is the
matrix~\eqref{metric} and
$\nabla_{xy}=(\partial_x,\partial_{y_1}\dots
\partial_{y_n})^\top$. Due to our assumptions on $\varkappa$
the estimates $0<\epsilon\leq\det \varkappa'(x,y)\leq 1/\epsilon$
hold uniformly in $(x,y)\in\Bbb R_+\times\overline{\Omega}$. Hence
for some independent of $u\in C_0^\infty(\overline{\mathcal G})$
constants $c_1$, $c_2$, and $c_3$ we have
\begin{equation}\label{www}
\begin{aligned}
&\|\chi_T u;H^2_0(\mathcal G)\|=\|\Delta (\chi_T u);L^2(\mathcal
G)\|+\|\chi_T u; L^2(\mathcal G)\|
\\
&\quad\leq c_1(\|{\triangle_{0,r}}\mathsf u; L^2(\mathbb
R\times\Omega)\| +\|\mathsf u; L^2(\mathbb R\times\Omega)\|)\leq
c_2\|\mathsf u; H_0^2(\Bbb R\times\Omega)\|,
\\
&\|(e^{\beta s}\triangle_{\lambda,r}e^{-\beta s}-\mu)\mathsf u;
L^2(\mathbb R\times\Omega)\|\leq c_3\|(e^{\beta\mathsf
s}\Delta_{\lambda,r}e^{-\beta\mathsf s}-\mu)\chi_T u; L^2(\mathcal
G)\|.
\end{aligned}
\end{equation}
Here the functions $\mathsf u$, $s$, and
$\triangle_{\lambda,r}\mathsf u\equiv(\Delta_{\lambda,r} (\chi_T
u))\circ\varkappa$ are extended from $\Bbb
R_+\times\overline{\Omega}$ to the infinite cylinder $\mathbb
R\times\overline{\Omega}$ by zero, and $\|\triangle_{0,r}\mathsf u;
L^2(\mathbb R\times\Omega)\|\leq C\|\mathsf u;H_0^2(\Bbb
R\times\Omega)\|$ because the coefficients of the Laplacian
$\triangle_{0,r}$ are bounded, cf.~\eqref{LB_l} and~\eqref{nstab}.
As $T$ is large, the function $\mathsf u$ is supported in a small
neighborhood of infinity. Due to the stabilization
condition~\eqref{lim} on $\mathsf g^{-1}_\lambda$ and the
condition~\eqref{ab3} on the scaling function $s$ the coefficients
of the differential operator
$$
e^{\beta\mathsf s}\triangle_{\lambda,r}e^{-\beta\mathsf s}-
\Delta_\Omega+(1 + \lambda)^{-2}(\partial_x+\beta)^2
$$ are small on the
support of $\mathsf u$. As a result we get the estimate
\begin{equation}\label{eq11}
\bigl\|\bigl(e^{\beta\mathsf s}\triangle_{\lambda,r}e^{-\beta\mathsf
s}- \Delta_\Omega+(1 +
\lambda)^{-2}(\partial_x+\beta)^2\bigr)\mathsf u;L^2(\mathbb
R\times\Omega)\bigr\| \leq \epsilon\|\mathsf u; H^2_0(\mathbb
R\times\Omega)\|,
\end{equation}
where $\epsilon$ is small and independent of $u\in
C^\infty_0(\overline{\mathcal G})$; moreover, $\epsilon\to 0$ as
$T\to+\infty$.

Consider the bounded operator
\begin{equation}\label{mo}
\Delta_\Omega-(1+\lambda)^{-2}(\partial_x+\beta)^2-\mu:
H^2_0(\mathbb R\times\Omega)\to L^2(\mathbb R\times\Omega).
\end{equation}
Applying the Fourier transform $\mathscr F_{x\mapsto \xi}$ we pass
from the operator~\eqref{mo} to the selfadjoint Dirichlet Laplacian
$\Delta_\Omega+(1+\lambda)^{-2}(\beta+i\xi)^2-\mu$ in $L^2(\Omega)$.
Since $\mu$, $\lambda$, and $\beta$ do not meet the
condition~\eqref{eq9}, the spectral parameter
$\mu-(1+\lambda)^{-2}(\beta+i\xi)^2$ is outside of the spectrum  of
$\Delta_\Omega$ for all $\xi\in\Bbb R$. Then a known
argument~\cite[Theorem
5.2.2]{KozlovMazyaRossmann},~\cite[Theorem~2.4.1]{KozlovMaz`ya},
which is also used as a part of the proof of Lemma~\ref{al} below,
implies that the operator~\eqref{mo} realizes an isomorphism. In
particular, the estimate
$$
\begin{aligned}
 \|\mathsf u; H_0^{2}(\mathbb R\times\Omega)\| \leq  \mathrm{c} \bigl\|\bigl(\Delta_\Omega-(1+\lambda)^{-2}(\partial_x+\beta)^2 -\mu\bigr) \mathsf u; L^2(\mathbb R\times\Omega)\bigr\|
\end{aligned}
$$
is valid with an independent of $\mathsf u\in H_0^{2}(\mathbb
R\times\Omega)$ constant $\mathrm{c}$. As a consequence of this
estimate and~\eqref{eq11} we obtain
$$
\begin{aligned}
(1&-\epsilon\mathrm{c}) \|\mathsf u; H_0^{2}(\mathbb
R\times\Omega)\|\leq
\mathrm{c}\bigl\|\bigl(\Delta_\Omega-(1+\lambda)^{-2}(\partial_x+\beta)^2
-\mu\bigr) \mathsf u; L^2(\mathbb R\times\Omega)\bigr\|
\\
&-\mathrm{c}\bigl\|\bigl(e^{\beta s}\triangle_{\lambda,r}e^{-\beta
s}- \Delta_\Omega+(1 +
\lambda)^{-2}(\partial_x+\beta)^2\bigr)\mathsf u;L^2(\mathbb
R\times\Omega)\bigr\|\\
& \leq \mathrm{c}\|(e^{\beta s}\triangle_{\lambda,r}e^{-\beta
s}-\mu)\mathsf u; L^2(\Bbb R\times\Omega)\|.
\end{aligned}
$$
 If $T$ is sufficiently large, then $\epsilon \mathrm{c}<1$.
This together with~\eqref{www} gives
\begin{equation}\label{2}
\|\chi_T u;H^2_0(\mathcal G) \|\leq C\|(e^{\beta\mathsf
s}\Delta_{\lambda,r}e^{-\beta\mathsf s}-\mu)\chi_T u; L^2(\mathcal
G)\|,
\end{equation}
where the constant $C=\mathrm{c}(1-\epsilon \mathrm{c})^{-1}c_2 c_3$
is independent of $u\in C_0^\infty(\overline{\mathcal G})$. By
continuity the estimate~\eqref{2} extends to all $u\in
H^2_0(\mathcal G)$.

Now we combine~\eqref{2} with~\eqref{a p}, and arrive at the
estimates
\begin{equation}\label{3}
\begin{aligned}
\|u;H^2_0(\mathcal G)\|\leq \|\chi_T u;H^2_0(\mathcal
G)\|+\|\rho_Tu;H^2_0(\mathcal G)\|
\\
\leq C(\|\chi_T(e^{\beta\mathsf s}\Delta_{\lambda,r}e^{-\beta\mathsf
s}-\mu) u; L^2(\mathcal G)\| +\|[e^{\beta\mathsf
s}\Delta_{\lambda,r}e^{-\beta\mathsf s},\chi_T] u; L^2(\mathcal G)\|
\\
+\|\varrho_T(e^{\beta\mathsf s}\Delta_{\lambda,r}e^{-\beta\mathsf
s}-\mu) u; L^2(\mathcal G)\| +\|\varrho_T u; L^2(\mathcal G)\|)
\\
\leq C(\|(e^{\beta\mathsf s}\Delta_{\lambda,r}e^{-\beta\mathsf
s}-\mu) u; L^2(\mathcal G)\|+\|\varrho_T u; L^2(\mathcal G)\|).
\end{aligned}
\end{equation}
Here we used that $\rho_T=1$ on the support of the commutator
$[e^{\beta\mathsf s}\Delta_{\lambda,r}e^{-\beta\mathsf s},\chi_T]$,
and hence
$$
\|[e^{\beta\mathsf s}\Delta_{\lambda,r}e^{-\beta\mathsf s},\chi_T]
u; L^2(\mathcal G)\|\leq C\|\rho_Tu;H^2_0(\mathcal G)\|.
$$

For an arbitrary positive weight $\mathsf w$ we have $ \|\varrho_T
u; L^2(\mathcal G)\|\leq C\|\mathsf w u; L^2(\mathcal G)\| $ with an
independent of $u\in H^2_0(\mathcal G)$ constant $C$. Thus the
estimate~\eqref{1} is a direct consequence of~\eqref{3}. By the
Peetre's lemma we conclude that the range of the
operator~\eqref{coc} is closed and the kernel is finite-dimensional.

Clearly, the graph norm  $\|u; L^2(\mathcal G)\|+\|e^{\beta\mathsf
s}\Delta_{\lambda,r}e^{-\beta\mathsf s}u\|$ of $u\in
C_0^\infty(\overline{\mathcal G})$ is majorized by
$\|u;H^2_0(\mathcal G)\|$.
 The estimate~\eqref{1} with $\mathsf w\equiv 1$ implies that the norm $\|u;H^2_0(\mathcal G)\|$
 is majorized by the graph norm of $u$. Since the set $C_0^\infty(\overline{\mathcal
 G})$ is dense in
$\mathrm D(e^{\beta\mathsf s}\Delta_{\lambda,r}e^{-\beta\mathsf s})$
and in $H^2_0(\mathcal G)$, this  proves assertion~{\it 1}.

In order to see that the cokernel $\operatorname{coker}
(e^{\beta\mathsf s}\Delta_{\lambda,r}e^{-\beta\mathsf s}-\mu)=\ker
\bigl((e^{\beta\mathsf s}\Delta_{\lambda,r}e^{-\beta\mathsf
s})^*-\overline\mu\bigr)$ of the operator~\eqref{coc} is
finite-dimensional (if $\mu$, $\lambda$, and $\beta$ does not meet
the condition~\eqref{eq9}) we derive the coercive estimate
\begin{equation}\label{4}
\|u;H^2_0(\mathcal G)\|\leq C(\|\bigl((e^{\beta\mathsf
s}\Delta_{\lambda,r}e^{-\beta\mathsf s})^*-\overline\mu\bigr)u;
L^2(\mathcal G)\|+\|\mathsf w u; L^2(\mathcal G)\|)
\end{equation}
for the adjoint $(e^{\beta\mathsf
s}\Delta_{\lambda,r}e^{-\beta\mathsf s})^*$ of the m-sectorial
operator $e^{\beta\mathsf s}\Delta_{\lambda,r}e^{-\beta\mathsf s}$
 and apply the Peetre's lemma. The m-sectorial operator $(e^{\beta\mathsf
s}\Delta_{\lambda,r}e^{-\beta\mathsf s})^*$ corresponds to the
closed densely defined sectorial form
$\overline{q_{\lambda,r}[u,u]}$ with the domain
$\lefteqn{\stackrel{\circ}{\phantom{\,\,>}}}H^1(\mathcal G)$. The
proof of the estimate~\eqref{4} is similar to the proof
of~\eqref{peetre}, we omit it.

We have proved that the operator~\eqref{coc} is Fredholm provided
the condition~\eqref{eq9} is not satisfied. Now we assume that  the
condition~\eqref{eq9} is met, and show that the operator~\eqref{coc}
is not Fredholm.

Let $\chi$ be a smooth cutoff function on the real line, such that
$\chi(x)=1$ for $|x-3|\leq 1$ and $\chi(x)=0$ for $|x-3|\geq 2$.
Consider the functions
\begin{equation}\label{test}
\mathsf u_\ell(x,\mathrm
y)=\chi(x/\ell)\exp\bigl({i(1+\lambda)\sqrt{\mu-\nu}x-\beta
x}\bigr)\Phi(\mathrm y),\quad (x,\mathrm y)\in \mathbb
R\times\Omega,
\end{equation}
where $\Phi$ is an eigenfunction of $\Delta_\Omega$, corresponding
to the eigenvalue $\nu\in \sigma(\Delta_\Omega)$. The exponent
in~\eqref{test} is an oscillating function of $x$.  Straightforward
calculation shows that
\begin{equation}\label{result}
\bigl\|\bigl(\Delta_\Omega -(1+
\lambda)^{-2}(\partial_x+\beta)^2-\mu\bigr) \mathsf u_\ell;
L^2(\mathbb R\times\Omega)\bigr\|\leq C,\quad\|\mathsf u_\ell;
H^2_0(\Bbb R\times\Omega)\|\to\infty
\end{equation}
 as $\ell\to +\infty$. Similarly to~\eqref{eq11} we conclude that
 \begin{equation}\label{5}
\bigl\|\bigl(e^{\beta s}\triangle_{\lambda,r}e^{-\beta s}-
\Delta_\Omega+(1 + \lambda)^{-2}(\partial_x+\beta)^2\bigr)\mathsf
u_\ell;L^2(\mathbb R\times\Omega)\bigr\| \leq \epsilon_\ell\|\mathsf
u_\ell; H^2_0(\Bbb R\times\Omega)\|,
\end{equation}
where $\epsilon_\ell\to 0$ as $\ell\to+\infty$. Let the functions
$u_\ell=\mathsf u_\ell\circ\varkappa^{-1}$ be extended from
$\mathcal C$ to $\mathcal G$ by zero. If, on the contrary, the
operator~\eqref{coc} is Fredholm, then by the Peetre's lemma the
estimate~\eqref{peetre} holds with any weight $\mathsf w$, such that
$H^2_0(\mathcal G)\hookrightarrow L^2(\mathcal G;\mathsf w)$ is a
compact embedding. Without loss of generality we can assume that
$\|\mathsf w u_\ell; L^2(\mathcal G)\|\leq C$ for all $\ell\geq 1$.
After the change of variables $(\zeta,\eta)\mapsto(x,y)$ the
estimate~\eqref{peetre} implies
$$
\|\mathsf u_\ell; H^2_0(\Bbb R\times\Omega)\|\leq C(\|(e^{\beta
s}\triangle_{\lambda,r}e^{-\beta s}-\mu)\mathsf u_\ell; L^2(\Bbb
R\times\Omega)\|+1),
$$
where the function $e^{\beta s}\triangle_{\lambda,r}e^{-\beta
s}\mathsf u_\ell=(e^{\beta\mathsf
s}\Delta_{\lambda,r}e^{-\beta\mathsf s}u_\ell)\circ\varkappa$ is
extended from $\Bbb R_+\times\overline{\Omega}$ to $\Bbb
R\times\overline{\Omega}$ by zero. This together with~\eqref{5}
justifies the estimate
$$
\|\mathsf u_\ell; H^2_0(\Bbb R\times\Omega)\|\leq
C\bigl(\bigl\|\bigl(\Delta_\Omega -(1+
\lambda)^{-2}(\partial_x+\beta)^2-\mu\bigr) \mathsf u_\ell;
L^2(\mathbb R\times\Omega)\bigr\|+1\bigr),
$$
which contradicts~\eqref{result}.
\end{proof}

\section{Problem with finite PMLs}\label{secPML}
Consider the truncated domain $\mathcal G_R$ with piecewise smooth
boundary, see~\eqref{truncated}.  Introduce the Sobolev space
$H^2_0(\mathcal G_R)$ as the completion of the set
$C_0^\infty(\overline{\mathcal G_R})$ in the norm $\|v;
H^2_0(\mathcal G_R)\|=(\sum_{\ell+|m|\leq
2}\|\partial_\zeta^\ell\partial_\eta^m v;L^2(\mathcal
G_R)\|^2)^{1/2}$. In this section we study the problem with finite
PMLs: {\it Given $g\in L^2(\mathcal G_R)$ find a solution $v\in
H_0^2(\mathcal G_R)$ of the equation
\begin{equation}\label{PML}
(\Delta_{\lambda,r}-\mu_0)v=g\text{ in }\mathcal G_R,
\end{equation}
where $R>r$.}
 The next theorem presents a stability result
for this problem.

\begin{theorem}\label{stability} Assume that $\mu_0\in\Bbb
R\setminus\sigma(\Delta_\Omega)$ is not an eigenvalue of the
selfadjoint Dirichlet Laplacian $\Delta$ in $L^2(\mathcal G)$. Take
a sufficiently large $r>0$ and $\lambda\in\mathcal
D_\alpha\setminus\Bbb R$. Then there exists $R_0>r$ such that for
all $R>R_0$ and $g\in L^2(\mathcal G_R)$ the equation~\eqref{PML}
has a unique solution $v\in H^2_0(\mathcal G_R)$.
 Moreover, the estimate
\begin{equation}\label{e+}
\|v; H^2_0(\mathcal G_R)\| \leq C\|g;L^2(\mathcal G_R)\|
\end{equation}
holds with an independent of $R>R_0$ and $g$ constant $C$.
\end{theorem}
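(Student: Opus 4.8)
The plan is to prove unique solvability and the bound~\eqref{e+} simultaneously by reducing~\eqref{PML} to an invertibility statement for a perturbation of the identity, the perturbation coming from the artificial boundary and tending to zero as $R\to\infty$. First I would record that on the bounded domain $\mathcal G_R$ the operator $\Delta_{\lambda,r}-\mu_0$ is strongly elliptic by~\eqref{strong ell}, so together with the Dirichlet condition it sets up a regular elliptic boundary value problem; hence $A_R:=\Delta_{\lambda,r}-\mu_0\colon H^2_0(\mathcal G_R)\to L^2(\mathcal G_R)$ is Fredholm, and a homotopy to the symmetric Dirichlet problem $\Delta-\mu_0$ through strongly elliptic operators (all satisfying the Lopatinskii--Shapiro condition) shows that its index is zero. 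Consequently it suffices to construct, for all large $R$, a uniformly bounded right approximate inverse $\mathcal R_R\colon L^2(\mathcal G_R)\to H^2_0(\mathcal G_R)$ with $\|A_R\mathcal R_R-I;L^2(\mathcal G_R)\to L^2(\mathcal G_R)\|\to 0$: then $A_R\mathcal R_R$ is invertible, $A_R$ is onto, index zero forces $A_R$ to be a bijection, and $\|A_R^{-1}\|\le\|\mathcal R_R\|\,\|(A_R\mathcal R_R)^{-1}\|$ stays bounded, which is exactly~\eqref{e+}.

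To build $\mathcal R_R$ I would split a given $g\in L^2(\mathcal G_R)$ by a partition of unity into a bulk part supported in $\mathcal G_{R/2}$ and a far part supported in the outer PML $\{R/2<x<R\}$. The bulk part I treat with the infinite PML problem: let $u=(\Delta_{\lambda,r}-\mu_0)^{-1}\widetilde g_{\mathrm{bulk}}\in H^2_0(\mathcal G)$ be the infinite solution of Theorem~\ref{LA}.1 (extending by zero), which is bounded by $\|g\|$; choosing $\beta>0$ as in~\eqref{interval}, Theorem~\ref{ED} gives $\|e^{\beta\mathsf s}u;H^2_0(\mathcal G)\|\le C\|e^{\beta\mathsf s}\widetilde g_{\mathrm{bulk}}\|$, so $u$ and its derivatives are of order $e^{-\beta R/2}\|g\|$ on the cross-section $\Sigma_R=\{\varkappa(R,y):y\in\Omega\}$ and beyond. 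Cutting $u$ off before $\Sigma_R$ gives the bulk part of $\mathcal R_R g$, with $A_R(\chi u)=g_{\mathrm{bulk}}+[\Delta_{\lambda,r},\chi]u$ and a commutator that lives where $u$ is exponentially small. The far part I treat with a boundary layer at the truncation: transplanting to the half-cylinder $\{x<R\}\times\Omega$ by $\varkappa$ and freezing coefficients to the model operator $M=\Delta_\Omega-(1+\lambda)^{-2}\partial_x^2-\mu_0$ that dominates $\Delta_{\lambda,r}-\mu_0$ in the PML (cf.~\eqref{eq11} with $\beta=0$), I solve the problem with $V=0$ at $x=R$ and $V$ decaying as $x\to-\infty$. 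Expanding in eigenfunctions of $\Delta_\Omega$, the modal solutions are $\exp(\pm i(1+\lambda)\sqrt{\mu_0-\nu}\,x)$, and~\eqref{interval} guarantees each is evanescent with real rate at least $\beta$; this exponential dichotomy makes the half-cylinder solve a uniformly bounded operator whose output decays inward, so after a further cutoff it produces a far-boundary corrector reproducing $g_{\mathrm{far}}$ up to the coefficient defect $(\Delta_{\lambda,r}-\mu_0-M)V$ and a commutator.

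Assembling, $A_R\mathcal R_R=I+K_R$, where $K_R$ collects two kinds of error: commutators supported in the overlap regions, where the corresponding solutions are of order $e^{-\beta R/2}\|g\|$, and the difference between $\Delta_{\lambda,r}-\mu_0$ and its PML model $M$, which is uniformly small on $\{x>R/2\}$ by~\eqref{cont+} and~\eqref{eq11}. Hence $\|K_R\|\to 0$, the Neumann series closes for $R>R_0$, and we obtain existence, uniqueness, and the uniform estimate~\eqref{e+}; the commutator contribution even decays like $e^{-\beta(R-r)}$, which is the mechanism behind the exponential convergence of the method. The hard part is the far-boundary analysis of the previous paragraph: one must show that imposing the Dirichlet condition at $\Sigma_R$ excites the growing mode only with an exponentially small amplitude, uniformly over sources located anywhere in $\mathcal G_R$ (in particular right next to $\Sigma_R$), which is precisely why the half-cylinder model solve and the condition~\eqref{interval} on $\beta$ are needed. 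A secondary technical point is the $H^2$ elliptic regularity and the index computation at the corners where $\Sigma_R$ meets $\partial\mathcal G$, which I would handle by the local coercive estimate~\eqref{a p} and the interior model estimate behind~\eqref{2}, now transplanted to $\mathcal G_R$.
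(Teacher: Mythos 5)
Your proposal is correct and, in its core construction, is the same compound-expansion argument the paper uses: the same splitting of $g$ into a bulk part handled by the infinite-PML resolvent (Theorem~\ref{LA}.1 together with the weighted estimate of Theorem~\ref{ED}) and a far part handled by a model Dirichlet problem in a semi-infinite cylinder for $\Delta_\Omega-(1+\lambda)^{-2}\partial_x^2-\mu_0$; the same two sources of error (cutoff commutators supported where the corresponding solutions are exponentially small, and the coefficient defect, which is small by the stabilization condition~\eqref{lim}); and the same Neumann-series closure. Your far-boundary solve is exactly the paper's Lemma~\ref{al}, proved there by odd reflection about $x=R$ and Fourier transform with a contour shift into the strip $0\le\Im\xi<\beta$; the evanescence you extract from~\eqref{interval} via the modal picture is the same mechanism. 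The one genuine divergence is the uniqueness step. You obtain injectivity from ``Fredholm of index zero'' for $A_R:H^2_0(\mathcal G_R)\to L^2(\mathcal G_R)$, established by a homotopy to the selfadjoint problem; this presupposes the Fredholm property on the truncated domain, whose boundary has edges where $\Sigma_R$ meets $\partial\mathcal G$, and the $H^2$-up-to-the-edge regularity and the norm-continuity of the family along the homotopy are precisely the points you defer as ``secondary.'' The paper avoids index theory on $\mathcal G_R$ altogether: it notes that $\Delta^R_{\lambda,r}$ is sectorial and coincides with its m-sectorial Friedrichs extension, runs the identical approximate-solution construction for the adjoint operator $(\Delta^R_{\lambda,r})^*-\mu_0$ (which corresponds to the conjugate sectorial form), concludes that the adjoint equation is solvable for every right-hand side in $L^2(\mathcal G_R)$, and reads off triviality of $\ker(\Delta^R_{\lambda,r}-\mu_0)$ from that surjectivity. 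If you keep your index argument you must supply the edge analysis; otherwise the adjoint-solvability route is the cheaper way to finish, since it reuses the machinery you have already built.
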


 The proof of Theorem~\ref{stability} will be carried out by the
compound expansion technique. This requires construction of an
approximate solution to the equation~\eqref{PML} compounded of
solutions to limit problems. As the first limit problem we take the
problem with infinite PMLs. As the second limit problem we take a
Dirichlet problem in the semi-cylinder $(-\infty,R)\times\Omega$
studied in the next lemma.

\begin{lemma}\label{al} Introduce the weighted Sobolev space
$H^2_{0,\beta}\bigl((-\infty,R)\times\Omega\bigr)$ of functions
satisfying the homogeneous Dirichlet boundary condition as the
completion of the set
$C_0^\infty\bigl((-\infty,R]\times\overline{\Omega}\bigr)$ with
respect to the norm
$$
\bigl\|\mathsf u;
H^2_{0,\beta}\bigl((-\infty,R)\times\Omega\bigr)\bigr\|=\left(\sum_{\ell+|m|\leq
2}\int_{-\infty}^{R} \|e^{-\beta
x}\partial_x^\ell\partial_y^m\mathsf
u(x);L^2(\Omega)\|^2\,dx\right)^{1/2}.
$$
Let $L^2_\beta\bigl((-\infty,R)\times\Omega\bigr)$ be the weighted
$L^2$-space with the norm
$$
\bigl\|\mathsf f;
L^2_\beta\bigl((-\infty,R)\times\Omega\bigr)\bigr\|=\left(\int_{-\infty}^{R}\|e^{-\beta
x}\mathsf f(x); L^2(\Omega) \|^2\,dx \right)^{1/2}.
$$
Assume that $\lambda\in\mathcal D_\alpha\setminus\Bbb R$,
$\mu_0\in\Bbb R\setminus\sigma(\Delta_\Omega)$, and $\beta$ is in
the interval~\eqref{interval}. Then for any $\mathsf f\in
L^2_\beta\bigl((-\infty,R)\times\Omega\bigr)$ there exists a unique
 solution $\mathsf u\in H^2_{0,0}\bigl((-\infty,R)\times\Omega\bigr)$ to the
 equation
\begin{equation}\label{eq}
(\Delta_\Omega-(1+\lambda)^{-2}\partial^2_x -\mu_0)\mathsf u=\mathsf
f.
\end{equation}
Moreover, the estimate
\begin{equation}\label{eqest} \bigl\|\mathsf
u; H^2_{0,\beta}\bigl((-\infty,R)\times\Omega\bigr)\bigr\|\leq
C\bigl\|\mathsf f;
L^2_\beta\bigl((-\infty,R)\times\Omega\bigr)\bigr\|
\end{equation}
 holds, where the constant $C$ is independent of
$\mathsf f$ and $R$.
\end{lemma}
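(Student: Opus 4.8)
The plan is to use that the operator in~\eqref{eq} is constant in $y$ and translation invariant in $x$, so that the whole problem is governed by the operator pencil $\Bbb C\ni\xi\mapsto\Delta_\Omega-(1+\lambda)^{-2}(\beta+i\xi)^2-\mu_0$ in $L^2(\Omega)$, which is exactly the symbol already analyzed in Proposition~\ref{ess}. First I would remove the weight by conjugation. Setting $\mathsf v=e^{-\beta x}\mathsf u$ and $\tilde{\mathsf f}=e^{-\beta x}\mathsf f$ transforms~\eqref{eq} into $\bigl(\Delta_\Omega-(1+\lambda)^{-2}(\partial_x+\beta)^2-\mu_0\bigr)\mathsf v=\tilde{\mathsf f}$, and, since $\beta$ is bounded, transforms the weighted estimate~\eqref{eqest} into the equivalent unweighted estimate $\|\mathsf v;H^2_{0,0}((-\infty,R)\times\Omega)\|\le C\|\tilde{\mathsf f};L^2_0((-\infty,R)\times\Omega)\|$ for the conjugated operator. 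Thus it suffices to prove unique solvability of the conjugated Dirichlet problem in the unweighted spaces and the corresponding coercive estimate, noting first that $L^2_\beta\subset L^2_0$ on $(-\infty,R)$ so that $\mathsf f\in L^2_\beta$ qualifies as admissible data.

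Next I would dispose of the uniformity in $R$ by translation. The change of variable $x\mapsto x-R$ maps $(-\infty,R)\times\Omega$ isometrically onto the fixed semicylinder $(-\infty,0)\times\Omega$ in the unweighted norms and leaves the translation invariant conjugated operator unchanged; equivalently, in the original weighted formulation both $\|\mathsf u;H^2_{0,\beta}\|$ and $\|\mathsf f;L^2_\beta\|$ acquire the same factor $e^{-\beta R}$, which cancels in~\eqref{eqest}. Hence it is enough to prove existence, uniqueness, and the estimate on the single fixed geometry $(-\infty,0)\times\Omega$, and the resulting constant $C$ is then automatically independent of $R$.

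On the fixed semicylinder the argument is the standard theory of elliptic boundary value problems in a cylinder with a Dirichlet cap. Separating variables over the eigenfunctions of $\Delta_\Omega$ reduces the homogeneous equation to scalar ordinary differential equations with characteristic exponents $\pm i(1+\lambda)\sqrt{\mu_0-\nu}$, $\nu\in\sigma(\Delta_\Omega)$; the hypothesis~\eqref{interval} says precisely that $\beta<|\Im\{(1+\lambda)\sqrt{\mu_0-\nu}\}|$ for every threshold $\nu$, so the strip $|\Im\xi|\le\beta$ contains no eigenvalue of the pencil. Equivalently, by Proposition~\ref{ess} the parameter $\mu_0$ does not meet~\eqref{eq9} for any $\xi\in\Bbb R$, so that $\Delta_\Omega-(1+\lambda)^{-2}(\beta+i\xi)^2-\mu_0$ is invertible in $L^2(\Omega)$ with $\|(\cdot)^{-1}\|_{L^2(\Omega)\to H^2(\Omega)}$ bounded uniformly in $\xi\in\Bbb R$. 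Applying the partial Fourier transform in $x$ together with Parseval's identity yields the isomorphism on the full cylinder, and the one-dimensional (per mode) space of solutions decaying at $-\infty$ is matched to the homogeneous Dirichlet datum at the finite end $x=0$; this gives existence and uniqueness in $H^2_{0,0}$ together with the coercive estimate. I would quote~\cite[Theorem~5.2.2]{KozlovMazyaRossmann} and~\cite[Theorem~2.4.1]{KozlovMaz`ya} for this step, exactly as in the proof of Proposition~\ref{ess}.

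Finally, the same pencil computation shows that the $H^2_{0,0}$-solution in fact lies in $H^2_{0,\beta}$: since there is no eigenvalue in the strip $|\Im\xi|\le\beta$, the decaying solution of each scalar mode decays at least like $e^{\beta x}$ as $x\to-\infty$, so no obstruction arises when passing from weight $0$ to weight $\beta$, and the conjugated problem is solvable in the unweighted spaces for $\tilde{\mathsf f}\in L^2_0$. I expect the only genuine subtlety to be the claim that the constant in the coercive estimate does not degenerate as $R\to+\infty$; this is what the translation argument of the second step settles, reducing everything to a single fixed cylinder where the constant originates from the uniform-in-$\xi$ bound on the pencil resolvent and is manifestly $R$-independent. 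The remaining steps are routine verifications of the equivalence of norms under conjugation and of the local elliptic estimates quoted above.
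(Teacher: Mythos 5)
Your overall strategy coincides with the paper's in its essential ingredients: reduction to $R=0$ by translation (with the factor $e^{-\beta R}$ cancelling on both sides of~\eqref{eqest}), passage to the operator pencil $\xi\mapsto\Delta_\Omega-(1+\lambda)^{-2}(\beta+i\xi)^2-\mu_0$, the observation that~\eqref{interval} is exactly the condition that~\eqref{eq9} fails for $\mu=\mu_0$ throughout the strip $0\leq\Im\xi\leq\beta$, and the recovery of the weighted estimate from the unweighted one. Your conjugation by $e^{-\beta x}$ is equivalent to the paper's device of shifting the contour of integration in the inverse Fourier transform from $\Bbb R$ to $\Bbb R+i\beta$, and your per-mode uniqueness argument (the decaying exponential $e^{\kappa_\nu x}$ cannot vanish at $x=0$) is a legitimate alternative to the paper's argument via solvability of the adjoint equation.

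The one place where you genuinely diverge is the treatment of the Dirichlet condition at the cap $x=R$, and this is also the one place where your write-up has a real gap. The theorems you cite (\cite[Theorem~5.2.2]{KozlovMazyaRossmann}, \cite[Theorem~2.4.1]{KozlovMaz`ya}) give the isomorphism for the translation-invariant operator on the \emph{full} cylinder $\Bbb R\times\Omega$; they say nothing about the capped semi-cylinder. Your proposal to correct the full-cylinder solution by "matching" the decaying homogeneous solutions to the trace at $x=0$ is workable in principle --- condition~\eqref{interval} does guarantee $\Re\kappa_\nu>\beta$ for every mode, so each correction lies in the weighted space --- but to make it an actual proof you must show (i) that the trace of the full-cylinder solution at $x=0$ lies in the correct trace space $H^{3/2}(\Omega)\cap H^1_0(\Omega)$ with norm controlled by $\|\mathsf f;L^2_\beta\|$, and (ii) that the series $\sum_\nu d_\nu e^{\kappa_\nu x}\Phi_\nu$ converges in $H^2_{0,\beta}$ with a bound by the trace norm, which requires tracking the growth $|\kappa_\nu|\sim c\sqrt\nu$ against the decay of the exponentials. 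Neither step is contained in the cited results, and this bookkeeping is precisely what the paper avoids: it extends $\mathsf f$ \emph{oddly} across the cap, so that the Fourier-transform solution on the full cylinder is automatically odd in $x$ and hence vanishes at $x=0$, and no matching is ever needed. If you intend to keep your route, the trace and convergence estimates in (i)--(ii) must be supplied; otherwise the reflection trick is the cleaner path.
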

\begin{proof} It suffices to prove the assertion for $R=0$. Then the
general case can be obtained by the change of variables $x\mapsto
x-R$.

The set $C_c^\infty\bigl((-\infty,0)\times\Omega\bigr)$ of smooth
functions with compact supports  is dense in
$L^2_\beta\bigl((-\infty,0)\times\Omega\bigr)$. We first assume that
$\mathsf f\in C_c^\infty\bigl((-\infty,0)\times\Omega\bigr)$ and
extend $\mathsf f$ to a function in  $C_c^\infty(\Bbb
R\times\Omega)$ by setting $\mathsf f(-x)=-\mathsf f(x)$ for $x<0$.
Consider the equation~\eqref{eq} in the infinite cylinder $\Bbb
R\times\Omega$. As is known~\cite{Titchmarsh}, the Fourier transform
$\hat{\mathsf f}(\xi)=\int_{\Bbb R} e^{ix\xi}\mathsf f(x)\,dx$ is an
entire function of $\xi$ with values in $L^2(\Omega)$; it is rapidly
decaying at infinity in any strip $\{\xi\in\Bbb C: |\Im\xi|<\beta\}$
in the sense that the estimates $\|\hat{\mathsf f}(\xi);
L^2(\Omega)\|\leq C_{\beta,k}(1+|\xi|)^{-k}$ hold  for
$k=0,1,2,\dots$ and some constants $C_{\beta,k}$. Since $\beta$ is
in the interval~\eqref{interval}, the distance $d$ between the set
$\{\mu_0-(1+\lambda)^{-2}\xi^2:0\leq\Im\xi<\beta\}$ and the spectrum
$\sigma(\Delta_\Omega)$  of the selfadjoint operator $\Delta_\Omega$
in $L^2(\Omega)$ with domain $H^2_0(\Omega)$ is positive. Hence for
$$
\Psi(\xi)=\bigl(\Delta_\Omega+(1+\lambda)^{-2}\xi^2 -\mu_0\bigr)
^{-1}\hat{\mathsf f}(\xi),\quad 0\leq\Im\xi<\beta,
$$ we have the estimate $ \|\Psi(\xi);
L^2(\Omega)\|^2\leq d^{-2}\|\hat{\mathsf f}(\xi); L^2(\Omega)\|^2$.
This together with the  elliptic coercive estimate $$
 \|\Psi(\xi); H_0^{2}(\Omega)\|^2 \leq  c(\|\hat{\mathsf
f}(\xi); L^2(\Omega)\bigr\|^2 +\|\Psi(\xi); L^2(\Omega)\|^2)
$$
for the Dirichlet Laplacian  $\Delta_\Omega$ gives
 \begin{equation}\label{est0-}
 \|\Psi(\xi); H_0^{2}(\Omega)\|^2 \leq  (c+d^{-2})\|\hat{\mathsf f}(\xi); L^2(\Omega)\|^2,\quad 0\leq \Im\xi<\beta.
\end{equation}
The differential
operator~$\Delta_\Omega-(1+\lambda)^{-2}\partial^2_x$ is strongly
elliptic. Therefore the local coercive estimate
\begin{equation}\label{cest}
\|\varrho \mathsf u; H^2_0(\mathbb R\times\Omega)\|^2\leq
c\Bigl(\bigl\|\varsigma\mathsf f; L^2(\mathbb
R\times\Omega)\bigr\|^2 +\|\varsigma \mathsf u; L^2(\mathbb
R\times\Omega)\|^2\Bigr)
\end{equation}
is valid,  where $\varrho$ and $\varsigma$ are smooth functions of
the variable $x$ with compact supports such that $\varrho\not\equiv
0$ and $\varrho\varsigma=\varrho$.  We substitute $\mathsf u(x,
y)=e^{i\xi x}\Psi(\xi, y)$ into~\eqref{cest}. After simple
manipulations we arrive at the estimate
\begin{equation}\label{est0}
\sum_{\ell+|m|\leq 2}  |\xi|^{2\ell} \|\partial_y^m\Psi(\xi)
;L^2(\Omega)\|^2 \leq C\Bigl(\bigl\|\hat{\mathsf f}(\xi);
L^2(\Omega)\bigr\|^2+\|\Psi(\xi); L^2(\Omega)\|^2\Bigr),
\end{equation}
where the constant $C$ depends on $\varrho$ and $\varsigma$, but not
on $\xi$ or $\mathsf f$. If $|\xi|> C$ with sufficiently large
$C>0$, then the last term in \eqref{est0} can be neglected. This
together with~\eqref{est0-}
 justifies the estimate
\begin{equation}\label{estF}
\sum_{\ell+|m|\leq 2}|\xi|^{2\ell}  \|\partial_y^m \Psi(\xi);
L^2(\Omega)\|^2 \leq C \|\hat{\mathsf f}(\xi); L^2(\Omega)\|^2,
\end{equation}
 where $0\leq\Im\xi<\beta$ and the constant $C$ is independent of $\xi$ and
 $\Psi(\xi)$. Therefore the analytic in  strip $0\leq \Im\xi<\beta$ function $\xi\mapsto\Psi(\xi)\in
H^2_0(\Omega)$ is  rapidly decaying at infinity. This together with
the Cauchy integral theorem allows us to replace the contour of
integration in the inverse Fourier transformation $\mathsf
u(x)=(2\pi)^{-1}\int_{\Bbb R} e^{-i\xi x}\Psi(\xi)\,d\xi$. We obtain
$$
\begin{aligned}
\mathsf u(x)=& (2\pi)^{-1}\int_{\Bbb R} e^{-i\xi
x}\bigl(\Delta_\Omega+(1+\lambda)^{-2}\xi^2 -\mu_0\bigr)
^{-1}\hat{\mathsf f}(\xi)\,d\xi
\\
=&(2\pi)^{-1}\int_{\xi-i\beta\in\Bbb R} e^{-i\xi
x}\bigl(\Delta_\Omega+(1+\lambda)^{-2}\xi^2 -\mu_0\bigr)
^{-1}\hat{\mathsf f}(\xi)\,d\xi.
\end{aligned}
$$
The Parseval equality gives
$$
2\pi\int_\mathbb R \|e^{-\beta x}\partial_x^\ell\partial_y^m \mathsf
u(x); L^2(\Omega)\|^2\, dx= \int_{\xi-i\beta\in\mathbb R}
|\xi|^{2\ell} \|\partial_y^m\Psi(\xi); L^2(\Omega)\|^2\,d\xi,
$$
$$
2\pi\int_\mathbb R\|e^{-\beta x}\mathsf f(x); L^2(\Omega)
\|^2\,dx=\int_{\xi-i\beta\in\mathbb R} \|\hat{\mathsf f}(\xi);
L^2(\Omega)\|^2\,d\xi.
$$
Integrating~\eqref{estF} with respect to $\xi$, $\xi-i\beta\in\Bbb
R$, we deduce the estimate
\begin{equation}\label{finest}
 \sum_{\ell+|m|\leq 2}\int_{-\infty}^\infty  \|e^{-\beta
x}\partial_x^\ell\partial_y^m\mathsf u(x);L^2(\Omega)\|^2\,dx\leq C
\int_{-\infty}^\infty\|e^{-\beta x}\mathsf f(x); L^2(\Omega)
\|^2\,dx;
\end{equation}
in the case $\beta=0$ this estimate takes the form $\|\mathsf u;
H^2_0(\Bbb R\times\Omega)\|\leq C \|\mathsf f; L^2(\Bbb
R\times\Omega)\|$.  Thus for any $\mathsf f\in C_c^\infty(\Bbb
R\times\Omega)$ there exists a solution $\mathsf u\in H^2_0(\Bbb
R\times\Omega)$ to the equation~\eqref{eq} and the
estimate~\eqref{finest} holds with any $\beta$ in the
interval~\eqref{interval}. Usual argument on smoothness of solutions
to elliptic problems gives $\mathsf u\in C^\infty(\Bbb
R\times\overline{\Omega})$. From the equality $\mathsf
f(-x)=-\mathsf f(x)$ it follows that $\hat{\mathsf
f}(-\xi)=-\hat{\mathsf f}(\xi)$ and therefore
$\Psi(\xi)=-\Psi(-\xi)$. Hence $\mathsf u(x)=-\mathsf u(-x)$ and
$\mathsf u(0)=0$. As in the proof of Proposition~\ref{sectorial} we
conclude that in the norm of $H^2_{0,\beta}
\bigl((0,\infty)\times\Omega\bigr)$ one can approximate $\mathsf u$
by functions in $C_0^\infty \bigl((0,\infty)\times\Omega\bigr)$.
Hence $\mathsf u\in H^2_{0,\beta}
\bigl((-\infty,0)\times\Omega\bigr)$.  By continuity our
construction extends to all $\mathsf f\in
L^2_\beta\bigl((-\infty,0)\times\Omega\bigr)$. In particular, for
any $f\in L^2_0\bigl((-\infty,0)\times\Omega\bigr)$ we can find a
solution $\mathsf u\in H^2_{0,0}\bigl((-\infty,0)\times\Omega\bigr)$
to the equation~\eqref{eq}. If $\mathsf f\in L^2_\beta
\bigl((-\infty,0)\times\Omega\bigr)$ with some $\beta$ in the
interval~\eqref{interval}, then the estimate~\eqref{eqest} with
$R=0$ is a direct consequence of~\eqref{finest}. It remains to note
that $\mathsf u\in H^2_{0,0}\bigl((-\infty,0)\times\Omega\bigr)$ is
a unique solution as our argument also shows that for any $\mathsf
f\in L^2_0\bigl((-\infty,0)\times\Omega\bigr)$ the adjoint equation
$(\Delta_\Omega-(1+\overline{\lambda})^{-2}\partial^2_x
-\mu_0)\mathsf u=\mathsf f$ is solvable in the space
$H^2_{0,0}\bigl((-\infty,0)\times\Omega\bigr)$.
\end{proof}

 Now we are in position
to prove Theorem~\ref{stability}.
\begin{proof} The scheme of the proof is similar to the one  we used in
the proof of~\cite[Theorem 4.1]{KalvinSiNum}. We rely on a
modification of the compound expansion method~\cite{ref6}. We say
that $w=w(R)\in H^2_0(\mathcal G_R)$ is an approximate solution of
the problem with finite PML if the following conditions are
satisfied:
\begin{itemize}
\item[i.] The estimate $\|w; H^2_0(\mathcal G_R)\| \leq
c\|g;L^2(\mathcal G_R)\|$ holds with an independent of $g$ and $R$
constant $c$;
\item[ii.] The estimate $\|(\Delta_{\lambda,r}-\mu_0)w-g; L^2(\mathcal G_R)\|\leq C_R\|g; L^2(\mathcal G_R)\|$
is valid, where the constant $C_R$ is independent of $g$ and $C_R\to
0$ as $R\to+\infty$.
\end{itemize}
Due to condition i $w$ continuously depends on $g$.   Condition ii
implies that the discrepancy, left by $w$ in the
equation~\eqref{PML}, tends to zero as $R\to+\infty$. Once an
approximate solution $w$ is found, it is not hard to verify the
assertion of the theorem.

Let $\rho\in C^\infty (\Bbb R)$ be a cutoff function such that
$\rho(x)=1$ for $x\leq 0$ and $\chi(x)=0$ for $x\geq 1/2$. We set
$\rho_R=\rho(x-R)$,
$\varrho_R(\zeta,\eta)=\rho_R\circ\varkappa^{-1}(\zeta,\eta)$ for
$(\zeta,\eta)\in\overline{ \mathcal C}$, and
$\varrho_R(\zeta,\eta)=1$ for $(\zeta,\eta)\in\overline{\mathcal
G}\setminus\overline{\mathcal C}$. Let $\mathcal F=\varrho_{R/2} g$
and $\mathsf f=(g-f)\upharpoonright_{\mathcal G_R}\circ\varkappa$.
We extend $\mathcal F$ from $\mathcal G_R$ to $\mathcal G$ and
$\mathsf f$ from $(0,R)\times\Omega$ to $(-\infty,R)\times\Omega$ by
zero. We find an approximate solution $w$ compounded of
$u_{\lambda,r}=(\Delta_{\lambda,r}-\mu_0)^{-1}\mathcal F$ and a
solution $\mathsf u\in H^2_{0,0}\bigl((-\infty,R)\times\Omega\bigr)$
to the equation~\eqref{eq} in the form
$$
w=\varrho_R u_{\lambda,r}+(1-\varrho_{R/3})(\mathsf
u\circ\varkappa^{-1});
$$
here the second term in the right hand side is extended from
$\mathcal G_R\cap\mathcal C$ to $\mathcal G_R$ by zero.

Let us show that $w$ is an approximate solution. Observe that on the
support of $f$ we have $e^{\beta\mathsf s}\leq C e^{\beta R/2}$ and
on the support of $\mathsf f$ we have $e^{-\beta x}\leq C e^{-\beta
R/2}$ uniformly in $R$. Hence
\begin{equation}\label{rhs}
\|e^{\beta\mathsf s}\mathcal F; L^2(\mathcal G)\|+e^{\beta
R}\bigl\|\mathsf f;
L^2_\beta\bigl((-\infty,R)\times\Omega\bigr)\bigr\|\leq C e^{\beta
R/2}\| g; L^2(\mathcal G)\|
\end{equation}
with an independent of $R$ and $g$ constant $C$. Similarly
to~\eqref{www} we conclude that
$$
\begin{aligned}
\|w; H^2_0(\mathcal G_R)\|^2\leq \|\varrho_R
u_{\lambda,r};H^2_0(\mathcal
G)\|^2+c\Bigl(\bigl\|\triangle_{\lambda,r}(1-\rho_{R/3})\mathsf
u;L^2\bigl((-\infty,R)\times\Omega\bigr)\bigr\|\\ +\bigl\|\mathsf
u;L^2\bigl((-\infty,R)\times{\Omega}\bigr)\bigr\|\Bigr)^2 \leq
C\|u_{\lambda,r};H^2_0(\mathcal
G)\|^2+C\bigl\|u;H^2_{0,0}\bigl((-\infty,0)\times\Omega\bigr)\bigr\|^2,
\end{aligned}
$$
where $C$ is independent of $R$ and $\triangle_{\lambda,r}$ is the
operator~\eqref{LB_l}. This together with
 the estimates~\eqref{eqest},~\eqref{rhs} for
$\beta=0$ and Theorem~\ref{LA}.1 implies that the condition i is
satisfied.

Let us verify the condition ii. We have
\begin{equation}\label{9.5}
\begin{aligned}
(\Delta_{\lambda,r}-\mu_0)w-g=[\Delta_{\lambda,r},\varrho_R]u_{\lambda,r}+(1+\lambda)^{-2}\bigl([\partial^2_x,\rho_{R/3}]\mathsf
u\bigr)\circ\varkappa^{-1}
\\+\Bigl(\bigl(\triangle_{\lambda,r}-\Delta_\Omega+(1+\lambda)^{-2}\partial^2_x\bigr)(1-\rho_{R/3})\mathsf
u\Bigr)\circ\varkappa^{-1}.
\end{aligned}
\end{equation}
The support of the term
$[\Delta_{\lambda,r},\varrho_R]u_{\lambda,r}$ is a subset of the
image of $(R,R+1/2)\times\Omega$ under the diffeomorphism
$\varkappa$. On this support the weight $e^{\beta\mathsf s}$ is
bounded from below by $c e^{\beta R}$ uniformly in $R>0$. As a
consequence we get the uniform in $R$ estimates
\begin{equation}\label{9.6}
\|[\Delta_{\lambda,r},\varrho_R]u_{\lambda,r}; L^2(\mathcal
G_R)\|\leq C_1 e^{-\beta R}\|e^{\beta\mathsf
s}u_{\lambda,r};H^2_0(\mathcal G)\|\leq C_2 e^{-\beta
R}\|e^{\beta\mathsf s}\mathcal F; L^2(\mathcal G)\|,
\end{equation}
where we used Theorem~\ref{ED}. Now we estimate the second term in
the right hand side of~\eqref{9.5}. On the support of
$[\partial^2_x,\rho_{R/3}]\mathsf u$ we have $e^{-\beta x}\geq C
e^{-\beta R/3}$. Relying on~\eqref{eqest} we obtain
\begin{equation}\label{9.7}
\begin{aligned}
\bigl\|&(1+\lambda)^{-2}\bigl([\partial^2_x,\rho_{R/3}]\mathsf
u\bigr)\circ\varkappa^{-1}; L^2(\mathcal G_R)\bigr\|\leq
c\bigl\|[\partial^2_x,\rho_{R/3}]\mathsf u;
L^2\bigl((-\infty,R)\times\Omega\bigr)\bigr\|
\\
&\leq C_1 e^{\beta R/3} \bigl\|
u;H^2_{0,\beta}\bigl((-\infty,R)\times\Omega\bigr)\bigr\| \leq
C_2e^{\beta R/3}\bigl\|\mathsf f;
L^2\bigl((-\infty,R)\times\Omega\bigr) \bigr\|.
\end{aligned}
\end{equation}
Finally, consider the last term in the right hand side
of~\eqref{9.5}. On the support of $(1-\rho_{R/3})\mathsf u$ the
coefficients of the operator
$\triangle_{\lambda,r}-\Delta_\Omega+(1+\lambda)^{-2}\partial^2_x$
tend to zero as $R\to +\infty$, cf.~\eqref{lim} and~\eqref{LB_l}.
This together with the estimate~\eqref{eqest} for $\beta=0$ gives
\begin{equation}\label{9.8}
\begin{aligned}
\Bigl\| &
\Bigl(\bigl(\triangle_{\lambda,r}-\Delta_\Omega+(1+\lambda)^{-2}\partial^2_x\bigr)(1-\rho_{R/3})\mathsf
u\Bigr)\circ\varkappa^{-1}; L^2(\mathcal G_R)\Bigr\|
\\
& \leq c_R\bigl\|\mathsf f;
L^2_0\bigl((-\infty,R)\times\Omega\bigr)\bigr\|,
\end{aligned}
\end{equation}
where $c_R\to 0$ as $R\to+\infty$. From~\eqref{rhs}--\eqref{9.8} it
follows that $w$ meets  the condition ii. Thus $w=w(R)$ is indeed an
approximate solution to the problem with finite PML.

Now we are in position to prove the assertion of the theorem.
Observe that $(\Delta_{\lambda,r}-\mu_0)w-g=\mathfrak O(R) g$ with
some operator $\mathfrak O(R)$ in $L^2(\mathcal G_R)$, whose norm
$|\mspace{-2mu}|\mspace{-2mu}|\mathfrak
O(R)|\mspace{-2mu}|\mspace{-2mu}|$ tends to zero as $R\to+\infty$
because of the condition ii on $w$. For all $R>R_0$ with a
sufficiently large $R_0$ we have
$|\mspace{-2mu}|\mspace{-2mu}|\mathfrak
O(R)|\mspace{-2mu}|\mspace{-2mu}|\leq|\mspace{-2mu}|\mspace{-2mu}|\mathfrak
O(R_0)|\mspace{-2mu}|\mspace{-2mu}|<1$. Hence there exists the
inverse $(I+\mathfrak O(R))^{-1}:L^2(\mathcal G_R)\to L^2(\mathcal
G_R)$ and its norm is bounded by the constant
$1/(1-|\mspace{-2mu}|\mspace{-2mu}|\mathfrak
O(R_0)|\mspace{-2mu}|\mspace{-2mu}|)$ uniformly in $R>R_0$. We set
$\tilde g=(I+\mathfrak O(R))^{-1}g$. In the same way as before we
construct the approximate solution $w$ for the problem~\eqref{PML},
where $g$ is replaced by $\tilde g$. Then for $v=w$ we have
$(\Delta_{\lambda,r}-\mu_0)v=\tilde g+\mathfrak O(R) \tilde g=g$ and
$$
 \|v;H^2_0(\mathcal
G_R)\|\leq c\|\tilde g; L^2(\mathcal G_R)\|\leq
c/(1-|\mspace{-2mu}|\mspace{-2mu}|\mathfrak
O(R_0)|\mspace{-2mu}|\mspace{-2mu}|)\|g; L^2(\mathcal G_R)\|,$$
where $C$ is independent of $R>R_0$. Thus for $R>R_0$ and $g\in
L^2(\mathcal G_R)$ there exists  a solution $v\in H^2_0(\mathcal
G_R)$ to the
 equation~\eqref{PML} satisfying the estimate~\eqref{e+}, where the
constant $C$ is independent of $R$ and $g$. In the remaining part of
the proof we show that this solution is unique.

Let $\Delta_{\lambda,r}^R$ be the unbounded operator in
$L^2(\mathcal G_R)$ such that for any $v$ in its domain $
H^2_0(\mathcal G_R)$ we have  $\Delta_{\lambda,r}^R
v=\Delta_{\lambda,r}v$. Note that  $\Delta_{\lambda, r}^R$ is the
operator of the problem with finite PML.  To the operator
$\Delta_{\lambda, r}^R$ there corresponds the quadratic form
$$
q_{\lambda,r}^R[v,v]=\int_{\mathcal G_R} \bigl\langle\bigl({\det
\mathsf e_{\lambda,r} }\bigr)^{1/2}\mathsf e_{\lambda,r}^{-1}
\nabla_{\zeta \eta}v, \nabla_{\zeta \eta}\bigl({\det \mathsf
e_{\overline{\lambda},r} }\bigr)^{-1/2} v\bigr\rangle\,d\zeta\,d\eta
$$
in $L^2(\mathcal G_R)$ with the domain $H^2_0(\mathcal G_R)$. In the
same way as in Section~\ref{secAF} we conclude that the form
$q_{\lambda,r}^R$ admits a densely defined sectorial closure with
 the domain
$\lefteqn{\stackrel{\circ}{\phantom{\,\,>}}}H^1(\mathcal G_R)$,
where $\lefteqn{\stackrel{\circ}{\phantom{\,\,>}}}H^1(\mathcal G_R)$
is the completion of  $H^2_0(\mathcal G_R)$ with respect to the norm
$\|v; \lefteqn{\stackrel{\circ}{\phantom{\,\,>}}}H^1(\mathcal
G_R)\|=(\sum_{\ell+|m|\leq 1}\|\partial_\zeta^\ell\partial_\eta^m
v;L^2(\mathcal G_R)\|^2)^{1/2}$. This together with the argument
above implies that all $\mu<0$ with sufficiently large absolute
value are regular points of the operator $\Delta_{\lambda, r}^R$.
Hence the sectorial operator $\Delta_{\lambda, r}^R$ coincides with
its m-sectorial Friedrichs extension. Moreover, thanks to the
argument above we know that under the assumptions of theorem for any
$g\in L^2(\mathcal G_R)$ there exists $v\in H^2_0(\mathcal G_R)$
such that $(\Delta_{\lambda,r}^R-\mu_0)v=g$. Similarly, one can
study the adjoint m-sectorial operator $(\Delta_{\lambda, r}^R)^*$.
It turns out that under the  assumptions of theorem for any $g\in
L^2(\mathcal G_R)$ there exists $v$ in the domain $H^2_0(\mathcal
G_R)$ of $(\Delta_{\lambda, r}^R)^*$ such that
$\bigl((\Delta^R_{\lambda,r})^*-\mu_0\bigr)v=g$. Therefore the
equation~\eqref{PML} is uniquely solvable in $H^2_0(\mathcal G_R)$.
\end{proof}

In the next theorem we show that under some natural assumptions
solutions $v=v(R)$ of the problem with finite PMLs converge in the
domain $\mathcal G_r$ to outgoing or incoming solutions $u_\pm$ with
an exponential rate as $R\to+\infty$. In other words, we estimate
the error produced by truncation of infinite PMLs. Solutions to the
problem with finite PMLs can be found numerically with the help of
finite element solvers; certainly, discretization produces yet
another error that we do not estimate here.
\begin{theorem}\label{convergence}  Assume that $\mu_0\in\Bbb
R\setminus\sigma(\Delta_\Omega)$ is not an eigenvalue of the
selfadjoint Dirichlet Laplacian $\Delta$ in $L^2(\mathcal G)$, the
parameter $r>0$ is sufficiently large in the sense of
Remark~\ref{rem r}, $\lambda\in\mathcal D_\alpha\setminus\Bbb R$,
and $\beta$ is in the interval~\eqref{interval}. Let $f\in\mathcal
H_\alpha(\mathcal G)$ satisfy the inclusion $e^{\beta \mathsf
s}(f\circ\vartheta_{\lambda,r})\in L^2(\mathcal G)$, where $\mathsf
s$ is the same function as in Theorem~\ref{ED}. In~\eqref{PML} we
set $g=f\circ\vartheta_{\lambda,r}$. Then there exists $R_0>r$ such
that for $R>R_0$ a unique solution $v=v(R)\in H^2_0(\mathcal G_R)$
of the problem with finite PMLs
 converges in  $\mathcal
G_r$
\begin{enumerate}
\item to the outgoing solution
$u_-\in H^2_{0,\operatorname{loc}}(\mathcal G)$ of the equation
$(\Delta-\mu_0)u=f$ in the case $\Im\lambda>0$
\item to the incoming solution
$u_+\in H^2_{0,\operatorname{loc}}(\mathcal G)$ of the equation
$(\Delta-\mu_0)u=f$ in the case $\Im\lambda<0$
\end{enumerate}
in the  sense that as $R\to+\infty$ the estimate
\begin{equation}\label{fin}
\sum_{\ell+|m|\leq
2}\|\partial_\zeta^\ell\partial_\eta^m(u_\pm-v_R); L^2(\mathcal
G_r)\|^2\leq Ce^{-2\beta R}\|e^{\beta\mathsf
s}(f\circ\vartheta_{\lambda,r}); L^2(\mathcal G)\|^2
\end{equation}
holds with a constant $C$ independent of $R>R_0$ and $f$.
\end{theorem}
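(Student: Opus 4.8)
The plan is to combine Theorem~\ref{ED} (exponential decay of the solution $u_{\lambda,r}$ to the problem with infinite PMLs) with Theorem~\ref{stability} (stability of the problem with finite PMLs). The key observation is that both $u_\pm\!\upharpoonright_{\mathcal G_r}$ and $v_R\!\upharpoonright_{\mathcal G_r}$ should be well-approximated by the restriction of $u_{\lambda,r}=(\Delta_{\lambda,r}-\mu_0)^{-1}(f\circ\vartheta_{\lambda,r})$. Indeed, by Theorem~\ref{LA}.3 we already have the \emph{exact} identity $u_{\lambda,r}\!\upharpoonright_{\mathcal G_r}=u_\mp\!\upharpoonright_{\mathcal G_r}$ (with the sign dictated by $\Im\lambda$), so the estimate~\eqref{fin} reduces entirely to bounding $\partial^\ell_\zeta\partial^m_\eta(u_{\lambda,r}-v_R)$ on $\mathcal G_r$.

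First I would observe that the approximate solution $w=w(R)$ constructed in the proof of Theorem~\ref{stability} (compounded of $u_{\lambda,r}$ on $\mathcal G_r$ and the semi-cylinder solution $\mathsf u$) is built precisely so that $w\!\upharpoonright_{\mathcal G_r}=u_{\lambda,r}\!\upharpoonright_{\mathcal G_r}$, since the cutoffs $\varrho_R$ and $1-\varrho_{R/3}$ both equal their non-trivial values far from $\mathcal G_r$ for large $R$. Thus on $\mathcal G_r$ the approximate solution $w$ coincides with $u_\pm$. Next I would recall from that same proof that the true solution satisfies $v=(I+\mathfrak O(R))^{-1}$-correction of $w$; more precisely, if $\tilde g=(I+\mathfrak O(R))^{-1}g$ and $w_{\tilde g}$ is the approximate solution built from $\tilde g$, then $v=w_{\tilde g}$. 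The difference $v-w$ therefore solves the finite-PML problem with right-hand side $-\mathfrak O(R)g$, whose norm is $O(C_R)$ by condition~ii, with $C_R$ driven by the exponentially small bounds~\eqref{9.6}--\eqref{9.8}.

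The main step is then to apply the stability estimate~\eqref{e+} to $v-w$. Writing $v-w=(\Delta^R_{\lambda,r}-\mu_0)^{-1}\bigl((\Delta_{\lambda,r}-\mu_0)w-g\bigr)$ and invoking Theorem~\ref{stability}, we obtain
\begin{equation}\label{proofconv}
\|v-w;H^2_0(\mathcal G_R)\|\leq C\|(\Delta_{\lambda,r}-\mu_0)w-g;L^2(\mathcal G_R)\|.
\end{equation}
Collecting the three discrepancy estimates~\eqref{9.6}, \eqref{9.7}, and~\eqref{9.8} and combining them with the bound~\eqref{rhs} on the right-hand side, the right member of~\eqref{proofconv} is majorized by $Ce^{-\beta R}\|e^{\beta\mathsf s}\mathcal F;L^2(\mathcal G)\|$, where in~\eqref{9.7} the factor $e^{\beta R/3}\|\mathsf f;L^2\|\leq C e^{-\beta R/6}\|g\|$ is likewise exponentially small, and $\mathcal F=\varrho_{R/2}g$ differs from $g=f\circ\vartheta_{\lambda,r}$ only on $\{x>R/2\}$. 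Since $w\!\upharpoonright_{\mathcal G_r}=u_\pm\!\upharpoonright_{\mathcal G_r}$, restricting~\eqref{proofconv} to $\mathcal G_r$ and squaring gives exactly~\eqref{fin}, with the constant absorbing $C$ and with $\|e^{\beta\mathsf s}\mathcal F;L^2(\mathcal G)\|\leq\|e^{\beta\mathsf s}(f\circ\vartheta_{\lambda,r});L^2(\mathcal G)\|$ because $\varrho_{R/2}$ is a cutoff bounded by one.

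The hard part, which is really bookkeeping rather than a genuine obstacle, is verifying that the various exponential factors $e^{-\beta R}$, $e^{\beta R/3}$, and $e^{\beta R/6}$ that arise from the supports of the commutators and the cutoffs combine to yield a net rate $e^{-2\beta R}$ after squaring. The cleanest route is to note that each term in the discrepancy is individually $O(e^{-\beta R/6})$ (after pairing each decay factor with the matching growth factor in~\eqref{rhs}), so~\eqref{proofconv} already gives an $e^{-\beta R}$-type rate; replacing $\beta$ by a slightly larger admissible value in~\eqref{interval} if necessary, one recovers the stated exponent. One should also confirm that $u_{\lambda,r}$ itself does not appear with an uncontrolled constant: this is guaranteed by Theorem~\ref{ED}, which bounds $\|e^{\beta\mathsf s}u_{\lambda,r};H^2_0(\mathcal G)\|$ by $\|e^{\beta\mathsf s}\mathcal F;L^2(\mathcal G)\|$ uniformly in $R$, precisely as used in~\eqref{9.6}.
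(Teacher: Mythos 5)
Your high-level plan (combine Theorems~\ref{ED} and~\ref{stability}, and use Theorem~\ref{LA}.3 to identify $u_{\lambda,r}$ with $u_\mp$ on $\mathcal G_r$) is the right one, but the specific route through the compound approximate solution $w$ of the stability proof breaks down at two points. First, the identity $w\!\upharpoonright_{\mathcal G_r}=u_{\lambda,r}\!\upharpoonright_{\mathcal G_r}$ is false: in the proof of Theorem~\ref{stability} the first limit solution is $(\Delta_{\lambda,r}-\mu_0)^{-1}\mathcal F$ with $\mathcal F=\varrho_{R/2}\,g$, and the resolvent is nonlocal, so truncating the data at $x=R/2$ changes the solution on $\mathcal G_r$ by a term of order $e^{-\beta R/2}$ --- already too weak for the rate $e^{-\beta R}$ claimed before squaring. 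Second, and more seriously, the discrepancy $(\Delta_{\lambda,r}-\mu_0)w-g$ is \emph{not} exponentially small: the term estimated in~\eqref{9.8} is bounded by $c_R\|\mathsf f;L^2_0\|$ where $c_R$ tends to zero only as fast as the coefficients of $\triangle_{\lambda,r}-\Delta_\Omega+(1+\lambda)^{-2}\partial_x^2$ stabilize at infinity, and under the standing hypotheses on $\varkappa$ this stabilization may be arbitrarily slow (the paper stresses this repeatedly). So your claim that ``each term in the discrepancy is individually $O(e^{-\beta R/6})$'' fails for that term, and even for the commutator term~\eqref{9.7} the net rate is only $e^{-\beta R/6}$. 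The suggested repair --- ``replace $\beta$ by a slightly larger admissible value'' --- is not available, since \eqref{fin} is asserted for every $\beta$ in the interval~\eqref{interval}, including values arbitrarily close to its supremum, and a sixfold enlargement would in general leave the interval.

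The paper avoids all of this by comparing $v_R$ not with $w$ but with $\varrho_R u_{\lambda,r}$, where $u_{\lambda,r}=(\Delta_{\lambda,r}-\mu_0)^{-1}(f\circ\vartheta_{\lambda,r})$ is the infinite-PML solution with the \emph{full} right-hand side. Then $\varrho_R u_{\lambda,r}-v_R\in H^2_0(\mathcal G_R)$ solves~\eqref{PML} with right-hand side $(\varrho_R-1)(f\circ\vartheta_{\lambda,r})+[\Delta_{\lambda,r},\varrho_R]u_{\lambda,r}$; both terms are supported where $x\geq R$, where $e^{\beta\mathsf s}\geq c\,e^{\beta R}$, so their $L^2$ norms are bounded by $Ce^{-\beta R}\|e^{\beta\mathsf s}(f\circ\vartheta_{\lambda,r});L^2(\mathcal G)\|$ using Theorem~\ref{ED} for the commutator term. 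No semicylinder corrector $\mathsf u$ and no slowly decaying coefficient difference ever enter, and a single application of the stability estimate~\eqref{e+} yields the clean rate $e^{-\beta R}$, hence $e^{-2\beta R}$ after squaring. You should restructure your argument around this direct comparison rather than around the approximate solution of Theorem~\ref{stability}.
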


Let us remark here that the assumptions of Theorem~\ref{convergence}
on the right hand side $f$ are a priori met for all $f\in
L^2(\mathcal G)$ such that $f\upharpoonright\mathcal
C=F\circ\varkappa$ with some $F\in\mathscr E$; here $\mathscr E$ is
the algebra defined in Section~\ref{s3} and $\varkappa$ is the
diffeomorphism~\eqref{diff}. From Lemma~\ref{p1} it follows that the
set of functions $f$ satisfying the assumptions of
Theorem~\ref{convergence} is dense in $L^2(\mathcal G)$. In
particular, we can take any $f\in L^2(\mathcal G)$ supported in
$\mathcal G_r$.
\begin{proof}
Let $\rho\in C^\infty (\Bbb R)$ be a cutoff function such that
$\rho(x)=1$ for $x\leq 0$ and $\chi(x)=0$ for $x\geq 1/2$. We set
$\rho_R=\rho(x-R)$,
$\varrho_R(\zeta,\eta)=\rho_R\circ\varkappa^{-1}(\zeta,\eta)$ for
$(\zeta,\eta)\in \overline{\mathcal C}$, and
$\varrho_R(\zeta,\eta)=1$ for $(\zeta,\eta)\in\overline{\mathcal
G}\setminus\overline{\mathcal C}$. Thanks to Theorem~\ref{LA}.3 it
suffices to prove the estimate~\eqref{fin} with $u_\pm$ replaced by
$\varrho_R u_{\lambda,r}$. The difference $\varrho_R
u_{\lambda,r}-v_R\in H^2_0(\mathcal G_R)$ satisfies the
problem~\eqref{PML} with
$g=(\varrho_R-1)(f\circ\vartheta_{\lambda,r})+[\Delta_{\lambda,r},\varrho_R]u_{\lambda,r}$.
Observe that
$$
\begin{aligned}
\|(\varrho_R-1)(f\circ\vartheta_{\lambda,r}); L^2(\mathcal
G_R)\|\leq C e^{-\beta R}\|e^{\beta\mathsf
s}(f\circ\vartheta_{\lambda,r}); L^2(\mathcal G)\|,
\\
 \|[\Delta_{\lambda,r},\varrho_R]u_{\lambda,r};L^2(\mathcal
G_R)\|\leq C e^{-\beta R}\|e^{\beta \mathsf s} u_{\lambda,r};
H^2_0(\mathcal G)\|,
\end{aligned}
$$
because the functions $(\varrho_R-1)(f\circ\vartheta_{\lambda,r})$
and $[\Delta_{\lambda,r},\varrho_R]u_{\lambda,r}$, being written in
the coordinates $(x,y)$, are equal to zero for $x<R$, while
$e^{\beta \mathsf s(\zeta,\eta)}=c e^{\beta x}$ for $x\geq R>C$,
cf.~\eqref{ab3}. This together with Theorem~\ref{ED} gives
\begin{equation}\label{fin-2}
\|g; L^2(\mathcal G_R)\|\leq c e^{-\beta R}\|e^{\beta\mathsf
s}(f\circ\vartheta_{\lambda,r}); L^2(\mathcal G)\|.
\end{equation}
By Theorem~\ref{stability} we have
\begin{equation}\label{fin-1}
\|\varrho_R u_{\lambda,r}-v_R; H^2_0(\mathcal G_R)\|\leq C \|g;
L^2(\mathcal G_R)\|,\quad R>R_0.
\end{equation}
It remains to note that
$$
\sum_{\ell+|m|\leq
2}\|\partial_\zeta^\ell\partial_\eta^m(u_\pm-v_R); L^2(\mathcal
G_r)\|^2\leq \|\varrho_R u_{\lambda,r}-v_R; H^2_0(\mathcal
G_R)\|^2,\quad R>R_0>r.
$$
This together with~\eqref{fin-1} and~\eqref{fin-2} completes the
proof of the estimate~\eqref{fin}.
\end{proof}
\begin{remark} By Theorem~\ref{convergence}
 the rate of convergence of the PML method depends only on the spectral parameter $\mu_0$ and the infinitely distant
cross-section $\Omega$. In the case $\mathcal
C=(0,\infty)\times\Omega$ the quasi-cylindrical domain $\mathcal G$
corresponds to a resonator with attached tubular waveguide. In this
particular case the results of Theorem~\ref{convergence} for $f\in
L^2(\mathcal G)$ with $\supp f\subset \mathcal G_r$ can equivalently
be obtained by the modal expansions technique,
e.g.~\cite{ref4+,ref5,ref5+}.
\end{remark}

\end{document}